\documentclass{article}
\usepackage[utf8]{inputenc}
\usepackage{amsmath}
\usepackage{amssymb}
\usepackage{amsthm}
\usepackage{enumerate}
\usepackage{cases}
\usepackage{tikz}
\usepackage[ruled,linesnumbered]{algorithm2e}
\usepackage[top=2.5cm,bottom=2.5cm,left=3cm,right=3cm]{geometry}
\usepackage{multirow}
\usepackage{multicol}
\usepackage{makecell}
\usepackage{subfigure}
\usepackage{bm}
\usepackage{float}

\usepackage{hyperref}
\hypersetup{
    colorlinks=true,
    linkcolor=blue,
    filecolor=magenta,      
    urlcolor=blue,
    citecolor=blue,
    }

\newcommand{\Smnp}{{\mathcal{S}_M(n,p)}}

\newcommand{\Rnp}{{\mathbb{R}^{n\times p}}}

\newcommand{\tp}{^\top}
\newcommand{\inner}[1]{\left\langle #1 \right\rangle}
\newcommand{\norm}[1]{\left\Vert #1\right\Vert}
\newcommand{\grad }{\mathrm{grad}}
\newcommand{\tr}{\operatorname{tr}}
\newcommand{\st}{\mathrm{s.~t.}}

\usetikzlibrary{intersections}
\newtheorem{theorem}{Theorem}[section]
\newtheorem{definition}[theorem]{Definition}
\newtheorem{proposition}[theorem]{Proposition} 
\newtheorem{assumption}[theorem]{Assumption}

\newtheorem{problem}[theorem]{Problem}

\newtheorem{corollary}[theorem]{Corollary}

\begin{document}
     \title{A Smooth Locally Exact Penalty Method for Optimization Problems over Generalized Stiefel Manifolds}
     \author{Linshuo Jiang, Xin Liu, Nachuan Xiao}
     	\maketitle

\begin{abstract}
In this paper, we consider a class of optimization problems constrained to the generalized Stiefel manifold. Such problems are fundamental to a wide range of real-world applications, including generalized canonical correlation analysis, linear discriminant analysis, and electronic structure calculations. Existing works mainly focuses on cases where the generalized orthogonality constraint is induced by a symmetric positive definite matrix $M$, a setting where the geometry essentially reduces to that of the standard Stiefel manifold. However, many practical scenarios involve a singular $M$, which introduces significant analytical and computational challenges. Therefore, we propose a \textbf{S}mooth \textbf{L}ocally \textbf{E}xact \textbf{P}enalty model \eqref{slep} and establish its equivalence to the original problem in the aspect of stationary points under a finitly large penalty parameter. This penalty model admits the direct application of various unconstrained optimization techniques, with convergence guarantees inherited from established results. Compared to Riemannian optimization approaches, our proposed penalty mode eliminates the need for retractions and vector transports, hence significantly reducing per-iteration computational costs. Extensive numerical experiments validate our theoretical results and demonstrate the effectiveness and practical potential of the proposed penalty model \ref{slep}.

          \textbf{Keywords: generalized orthogonality constraint, generalized Stiefel manifold, penalty function}
      \end{abstract}

 \section{Introduction}\label{intro}
     In this paper, we focus on a class of matrix optimization problems with generalized orthogonal constraints, formulated as follows,
    	\begin{equation}\tag{OCP}\label{ocp}
    	\begin{aligned}
    		\min_{X\in\mathbb{R}^{n\times p}}\;\;\; &f(X),\\
    		\st\;\;\; &X^\top M X=I_p.
    	\end{aligned}
    \end{equation}
Here, $M \in \mathbb{R}^{n\times n}$ is a positive semi-definite matrix, $I_p$ denotes the $p$-th order identity matrix with $p\leq n$, and $f:\mathbb{R}^{n\times p}\to \mathbb{R}$ is a function satisfying the following blanket assumption:
\begin{assumption}[The blanket assumption]\label{as:blanket}
    $f$ is locally Lipschitz continuous and smooth on $\mathbb{R}^{n\times p}$.
\end{assumption}
The feasible region of \ref{ocp}, denoted as $\mathcal{S}_M(n,p):=\{X\in\mathbb{R}^{n\times p}:X^\top M X=I_p\}$, is an embedded submanifold of $\mathbb{R}^{n\times p}$, commonly referred to as the generalized Stiefel manifold \cite{absil_optimization_2008,boumal_manopt_2014,shustin_preconditioned_2021,sato_cholesky_2019}.
Let $r:=\operatorname{rank}(M)$. It is straightforward to verify that  $\mathcal{S}_M(n,p)$ is an empty set whenever $r< p$. Therefore, this study focuses on cases where $r \geq p$. 

The optimization problem \ref{ocp} has numerous applications in scientific computing,
data science and statistics \cite{bendory_nonconvex_2016,kasai_riemannian_2018,gao_orthogonalization-free_2022}, including  Generalized Canonical Correlation Analysis (GCCA) \cite{kettenring_canonical_1971,sorensen_generalized_2021,gao_sparse_2021}, Linear Discriminant Analysis (LDA) \cite{chen_sparse_2013,luo_linear_2011}, and  electronic structure calculations \cite{gao_orthogonalization-free_2022}. In the following, we show that GCCA can be formulated as \ref{ocp} and clarify the correspondence between variables and constraints.

\begin{problem}[GCCA]\label{pr:1}
Given $k$ random vectors $\{\mathcal{Z}_i\in\mathbb{R}^{n_i}: 1\leq i\leq k\}$ with $k\ge 2$ and $n:=\sum_{i=1}^k n_i$. The goal of GCCA is to investigate the linear correlations among these $k$ random vectors.  \cite{gao_sparse_2021} proposed the following optimization formulation
for GCCA:
\begin{equation}\label{p:gca}
    \begin{aligned}   
		&&\max_{X\in\mathbb{R}^{n\times p}}\;\;\;&\operatorname{tr}(X^\top\Sigma X),\\
   &&\st\;\;\;& X^\top \Sigma_0 X=I_p,
   \end{aligned}
\end{equation}
   where
\[
   \Sigma:=\mathbb{E}\left[\begin{pmatrix}
\mathcal{Z}_1\mathcal{Z}_1^\top&\dots&\mathcal{Z}_1\mathcal{Z}_1^\top\\ 
	  \vdots &&\vdots\\
\mathcal{Z}_1\mathcal{Z}_1^\top&\dots&\mathcal{Z}_k\mathcal{Z}_k\\
   \end{pmatrix}\right],\quad  \Sigma_0:=\mathbb{E}\left[\begin{pmatrix}
\mathcal{Z}_1\mathcal{Z}_1^\top&&0\\ 
	   &\ddots&\\
0&&\mathcal{Z}_k\mathcal{Z}_k\\
   \end{pmatrix}\right].
\]

 The population covariance matrices $\Sigma$ and $\Sigma_0$ are defined on the underlying distributions and are therefore unobservable in practice. They are typically estimated from finite samples of the corresponding random vectors. In high-dimensional settings—such as genetics, brain imaging, spectroscopic imaging, and climate studies—where the number of features is comparable to or exceeds the sample size, the empirical covariance matrix becomes rank-deficient and non-invertible, posing challenges for subsequent statistical or optimization procedures.
\end{problem}

\begin{problem}[Smoothing approximation subproblem of sparse GCCA]
	To enhance interpretability and practicality, sparse structural assumptions on the solution have been widely adopted in both theoretical and empirical studies, motivating the sparse generalized correlation analysis (SGCCA) \cite{chen_sparse_2013,gao_sparse_2017,gao_sparse_2021}.
The optimization problem for the $l_{2,1}$-norm sparse GCCA can be formulated as
\begin{equation}\label{sgca}
      \begin{aligned}
&&\max_{X\in\mathbb{R}^{n\times p}}\;\;\;&\operatorname{tr}(X^\top\Sigma X)+\gamma \|X\|_{2,1},\\
	   &&\st\;\;\;& X^\top \Sigma_0 X=I_p,
	   \end{aligned}
\end{equation}
   where $\|X\|_{2,1}:=\sum_{i=1}^{n}\|X_{i\cdot}\|_2$ for $X\in\mathbb{R}^{n\times p}$, and $\gamma$ is the sparsity parameter. This is a non-smooth optimization problem over the generalized Stiefel manifold, which can be addressed via a series of smoothing approximation problems \cite{zhang_riemannian_2021}.  A suitable smoothing approximation for the $l_{2,1}$-norm in SGCCA is given by 
\[
	   \begin{aligned}	&&\min_{X\in\mathbb{R}^{n\times p}}\;\;\;&-\frac{1}{2}\operatorname{tr}(X^\top\Sigma X)+\gamma \sum_{i=1}^{p} s_{\mu}(\|X_{i\cdot}\|_2),\\
	   &&\st\;\;\;& X^\top \Sigma_0 X=I_p,
	   \end{aligned}
\]
   where the smoothing function is defined as 
    \begin{equation}\label{smoothfun}
        s_\mu(t):=\left\{\begin{aligned}
	   &|t|,\;\;\;&\text{if } |t|>\frac{\mu}{2},\\ 
	   &\frac{t^2}{\mu}+\frac{\mu}{4},\;\;\;&\text{if } |t|\le\frac{\mu}{2},
   \end{aligned}\right.
    \end{equation}
   and
   $\mu$ denotes the smoothing parameter.
  
\end{problem}

\subsection{Existing works}\label{existwork}
A large number of efficient algorithms for solving \ref{ocp} fall into the category of Riemannian optimization approaches \cite{absil_optimization_2008,boumal2023introduction}, including Riemannian gradient methods \cite{abrudan_steepest_2008,iannazzo_r_2018}, Riemannian conjugate gradient methods \cite{sato_new_2015,sato_riemannian_2022}, Riemannian trust-region methods \cite{absil_trust_2007,baker_riemannian_2008}, Riemannian Newton methods \cite{adler_newton_2002}, and Riemannian quasi-Newton methods \cite{huang_broyden_2015}. Interested readers can refer to \cite{absil_optimization_2008,boumal2023introduction} and the recent survey \cite{hu2020brief} for further details. These methods are derived by extending unconstrained optimization techniques through
geometric tools such as Riemannian gradients, retractions, vector transports,  and logarithmic maps \cite{petersen2006riemannian}.
Therefore, the efficient computation of these geometric tools is crucial for developing practical Riemannian optimization methods. For the generalized Stiefel manifold where $M$ is positive definite, prior studies \cite{yger_adaptive_2012,boumal_manopt_2014,sato_cholesky_2019,shustin_preconditioned_2021} have provided explicit formulations for computing these geometric tools. However, geometric tools for $\mathcal{S}_M(n,p)$ with rank-deficient $M$ remain largely unexplored. Moreover, the use of such tools introduces additional computational costs at each iteration
and reduces the parallelizability of Riemannian optimization algorithms owing to the inclusion
of matrix decomposition operations, as discussed in Section 4.

In addition to Riemannian optimization approaches, other studies have employed splitting methods \cite{zhu_nonconvex_2017,lai_splitting_2014,chen_augmented_2016}, which, however, lack convergence guarantees owing to the nonconvex nature of \ref{ocp}. By contrast, augmented Lagrangian methods can be applied to solve \ref{ocp} with convergence guarantees. The corresponding augmented Lagrangian penalty function is expressed as
\begin{equation}\label{eq:lagrange}
    \mathcal{L}_{\beta}(X,\Lambda):=f(X)-\frac{1}{2}\left\langle\Lambda,X^\top MX-I_p\right\rangle+\frac{\beta}{4}\left\|X^\top MX-I_p\right\|_F^2,
\end{equation}
where $\Lambda\in\mathbb{S}^p:=\{W\in\mathbb{R}^{p\times p}:W^\top=W\}$ denotes the dual variable. However, augmented Lagrangian methods are often inefficient because their performance is highly sensitive to the penalty parameter $\beta$ in nonconvex optimization problems. 

\subsection{Motivation}\label{moti}
In this study, we focus on developing efficient infeasible methods for \ref{ocp}. 
According to the Karush-Kuhn-Tucker (KKT) conditions \cite{nocedal_numerical_1999} for \ref{ocp},
\begin{equation}\label{kkt}
    \left\lbrace \begin{aligned}
		&\nabla f(X^*)-MX^*\Lambda^*= 0,\\
		&{X^*}^\top M X^*=I_p,
	\end{aligned}\right.
\end{equation}
we observe that the Lagrangian multiplier $\Lambda^*$ of the KKT pair $(X^*,\Lambda^*)$ has the closed form
$\Lambda^*={X^*}^\top\nabla f(X^*)\in\mathbb{S}^p$. Substituting the dual variable $\Lambda$
in \eqref{eq:lagrange}  
with the mapping $\Lambda(X):=\Psi(X^\top\nabla f(X))$, we obtain Fletcher's penalty function \cite{fletcher1970class}: 
\begin{equation}\label{eq:laglambda}
\begin{aligned}
h_{FL}(X) :={}& f(X)-\frac{1}{2}\left\langle\Psi(X^\top \nabla f(X)),X^\top MX-I_p\right\rangle+\frac{\beta}{4}\left\|X^\top MX-I_p\right\|_F^2\\
={}&f(X)-\frac{1}{2}\langle \nabla f(X),X(X^\top MX-I_p) \rangle +\frac{\beta}{4}\left\|X^\top MX-I_p\right\|_F^2
\end{aligned}
\end{equation}
where $\Psi(W):=\frac{1}{2}(W + W^\top), W\in\mathbb{R}^{n\times n}$, is the symmetrization operator.
However, including $\nabla f(X)$ in \eqref{eq:laglambda} undermines the differentiability of $h_{FL}$. More importantly, evaluating the function and gradient of $f$ requires computing both $\nabla f$ and $\nabla^2 f$. Therefore, it is challenging to directly employ unconstrained optimization approaches to solve \eqref{eq:laglambda}. 

To overcome these challenges, we approximate $f(X) - \tfrac{1}{2}\langle \nabla f(X), X(X^\top M X - I_p) \rangle$ using a Taylor expansion along the direction $-X(X^\top M X - I_p)$, which yields
$f(X) - \tfrac{1}{2}\langle \nabla f(X), X(X^\top M X - I_p) \rangle= f(X(\tfrac{3}{2}I_p - \tfrac{1}{2}X^\top M X))+\mathcal{O}(\|X^\top M X-I_p\|^2)$.
For convenience, we define
\begin{equation}
\mathcal{A}(X) := X\left(\tfrac{3}{2}I_p - \tfrac{1}{2}X^\top M X\right),
\end{equation}
and thus obtain the following Smooth Locally Exact Penalty (SLEP) model for \ref{ocp}:
\begin{equation}\label{slep}\tag{SLEP}
\min_{X\in\mathbb{R}^{n\times p}}
h(X) := f(\mathcal{A}(X))
+ \tfrac{\beta}{4}\|X^\top M X - I_p\|_F^2.
\end{equation}


\subsection{Contributions}\label{contri}

In this paper, we propose a novel penalty model, \ref{slep}, for the optimization problem over a generalized Stiefel manifold with a possibly rank-deficient matrix $M$ in \ref{ocp}. We prove that, for any feasible point, there exists a neighborhood in which the first- and second-order stationary points of \ref{slep} and \ref{ocp} coincide when the penalty parameter is sufficiently large.


 Based on the smoothness and exactness of SLEP, we can directly employ existing unconstrained optimization algorithms to efficiently solve the OCP through SLEP. 
As an example of this infeasible framework, we introduce the SLBB algorithm, which applies a gradient-based method to \ref{slep}. We also conduct numerical experiments to determine default settings for our algorithms and show that they outperform corresponding Riemannian methods. These results confirm that the proposed \ref{slep} model enables the efficient application of unconstrained optimization methods and offers substantial performance advantages.

\subsection{Organization}\label{org}

The remainder of this paper is organized as follows. Section \ref{pre} introduces preliminaries, including optimality conditions and several useful lemmas. Section \ref{theory} analyzes the properties of the penalty function \ref{slep}, focusing on its exactness and smoothness. Section \ref{algo} presents the proposed algorithm, and Section \ref{numerical} presents the  numerical performance of our proposed algorithm. Finally, Section \ref{conclu} summarizes the main findings.

\section{Preliminaries}\label{pre}
This section first presents the fundamental notations and constants used in the theoretical analysis. It then revisits the first- and second-order optimality conditions of \ref{ocp}. 

\subsection{Notations}\label{notation}
Throughout this paper, we adopt the following notations. 
Given a square matrix $W$, $\operatorname{tr}(W)$ denotes its trace.
 For a symmetric matrix $W$,
$\lambda_{\max}(W)$ and $\lambda_{\min}(W)$ represent its largest and smallest eigenvalues, respectively, and $\lambda_{+,\min}(W)$ denotes the smallest positive eigenvalue.
For any matrix $X$, $\sigma_{\max}(X)$ and $\sigma_{\min}(X)$ represent its largest and smallest singular values, respectively. 
Given two matrices $X,Y$ of the same size, 
$\langle X,Y\rangle:=\operatorname{tr}(X^\top Y)$ represents their canonical inner product. For a symmetric positive-definite matrix $B$ of compatible dimension, we
denote the inner product of $X,Y$ deduced by $B$ as 
$\langle X,Y\rangle_B:=\operatorname{tr}(X^\top B Y)$. 
 The 2-norm and Frobenius norm of a given matrix $X$ are denoted by $\|X\|_2$ and $\|X\|_F$, respectively. 
For a matrix $X$ and positive constant $\rho>0$,
$\mathbb{B}(X,\rho)$ represents the closed ball centered at $X$ with radius $\rho$. 
$\mathbb{S}^n$ denotes the set of all symmetric matrices in $\mathbb{R}^{n\times n}$. 
Given a square matrix $W$, we define $\Psi(W):= \frac{1}{2}(W + W^\top)$ as its symmetric part and $\operatorname{diag}(W)$ as the vector of its diagonal entries.
Moreover,
$\operatorname{Diag}(v)$ denotes the diagonal matrix with all entries of a given vector $v$ along its diagonal.  $\operatorname{qf}(X)$ denotes the Q factor of the QR factorization of matrix $X$.
The set $\operatorname{Null}(M):=\{X\in\mathbb{R}^{n\times p}:MX=0\}$ denotes the null subspace of the matrix $M$ acting on $\Rnp$. Let $\mathcal{P}_1:\mathbb{R}^{n\times p}\to(\operatorname{Null}(M))^{\perp}$ and $\mathcal{P}_2:\mathbb{R}^{n\times p}\to\operatorname{Null}(M)$ denote the orthogonal projections onto $(\operatorname{Null}(M))^{\perp}$ and $\operatorname{Null}(M)$, respectively.
The tangent space of the generalized Stiefel manifold $\mathcal{S}_M(n,p)$ at point $X\in\mathcal{S}_M(n,p)$ is defined as $\mathcal{T}_X:= \left\{D\in\mathbb{R}^{n\times p}| \Psi(D^\top M X) = 0\right\}.$
Let the eigenvalue decomposition of the positive semidefinite matrix $M$ be 
\[M=V\begin{pmatrix}
\Sigma&0\\
0&0
\end{pmatrix}V^\top,\]
where $V \in \mathbb{R}^{n \times n}$ is an orthogonal matrix and $\Sigma \in \mathbb{R}^{r \times r}$ is a diagonal matrix with strictly positive entries.
We then construct a positive-definite matrix
\[\bar{M}:=V\begin{pmatrix}
\Sigma&0\\
0&I_{n-r}
\end{pmatrix}V^\top.\]
The generalized Stiefel manifold $\Smnp$, equipped with the Riemannian metric inherited from the inner product 
$\langle \cdot,\cdot\rangle_{\bar{M}}$, admits the following geometric structure. The normal space at point $X\in\Smnp$ is defined as:
\[\mathcal{N}_X:=\{\mathcal{P}_1(X)S:S\in\mathbb{S}^{p}\}.\]
The Riemannian gradient of the smooth function $f$ is given by
\begin{equation}\label{grad}
    \operatorname{grad}f(X):={\bar{M}}^{-1}\nabla f(X)-\mathcal{P}_1(X)\Psi(\mathcal{P}_1(X)^\top\nabla f(X)),
\end{equation}
and the Riemannian Hessian operator along $T\in\mathcal{T}_X$ is 
\begin{equation}\label{hess}
    \operatorname{hess} f(X)[T]:={\bar{M}}^{-1}(\nabla^2 f(X)[T]-MT\Psi(\mathcal{P}_1(X)^\top\nabla f(X))).
\end{equation}
For any point $X\in\{Y\in\mathbb{R}^{n\times p}:Y^\top MY\succ0\}$, the mapping
$\mathcal{R}(X):=X(X^\top MX)^{-\frac{1}{2}}$
represents the projection onto the generalized Stiefel manifold.

For simplicity, we denote the composition of the objective function $f$ with the operator $\mathcal{A}$ as
\begin{equation}
    g(X):=f(\mathcal{A}(X)),
\end{equation}
and denote the composition of $\nabla f$ with $\mathcal{A}$ as
\begin{align}
     G(X):=\nabla f(\mathcal{A}(X)).
\end{align}
\subsection{Constants}\label{constant}
Under Assumption \ref{as:blanket}, for any $X\in\mathcal{S}_M(n,p)$, we define the following constants,
    \[\begin{aligned}
       H_{X,\nabla f}&:=\sup_{Y\in  \mathbb{B}(X,1)} \|\nabla f(Y)\|_F,
     &&H_{X,\nabla g}:=\sup_{Y\in \mathbb{B}(X,1)} \|\nabla g(Y)\|_F,\\
		H_{X,G}&:=\sup_{Y\in \mathbb{B}(X,1)} \|G(Y)\|_F,
&&L_{X,\nabla g}:=\sup_{X_1,X_2\in \mathbb{B}(X,1)} \frac{\|\nabla g(X_1)-\nabla g(X_2)\|_F}{\|X_1-X_2\|_F},\\
    \end{aligned}\]
where $g(X):=f\left(\mathcal{A}(X)\right), G(X):=\nabla f\left(\mathcal{A}(X)\right).$

In Section \ref{relation}, we establish the equivalence between \ref{ocp} and \ref{slep} in the neighborhood $\mathbb{B}(X,\rho_X)$ of a feasible point $X\in\Smnp$, where the radius $\rho_X$ is defined as
\[\rho_X:=\min\left\{\frac{1}{2},\sup\left\{\rho>0:\sup_{Y\in\mathbb{B}(X,\rho)}\left\|Y^\top MY-I_p\right\|_2\le\frac{1}{6C_X}\right\}\right\},\]
and $C_X:=\|X\|_F+1$. Furthermore, throughout our theoretical analysis, we denote the lower bound of the penalty parameter $\beta$ as 
\[ \beta_{X}:=\max\left\{6H_{X,\nabla f}C_X,6H_{X,G},4L_{X,\nabla g}C_X^2,\frac{6\lambda_{\min}^{\frac{1}{2}}(\bar{M})H_{X,\nabla g}C_X}{\kappa^{\frac{1}{2}}(\bar{M})(1+\lambda_{\max}C_X^2)}\right\}.\]

\subsection{Optimality conditions}\label{opcon}
In this subsection, we present the optimality conditions for both the original Riemannian optimization problem \ref{ocp} and the unconstrained optimization problem \ref{slep}. 
For the original Riemannian optimization problem \ref{ocp}, we define its first- and second-order stationary points as follows:
\begin{definition}[\cite{absil_optimization_2008}]\label{def:o_ocp}
We call $X^*\in\Smnp$ a first-order stationary point of \ref{ocp} if  $\grad _{\bar{M}} f(X^*)=0$. We call $X^*$ a second-order stationary point of \ref{ocp} if $X^*$ is a first-order stationary point of \ref{ocp} and
	\begin{equation}
		\left\langle T,\operatorname{hess}_{\bar{M}}(X^*)[T]\right\rangle_{\bar{M}}\ge0,\;\;\; \forall
  T\in\mathcal{T}_{X^*},
	\end{equation}
	where $\mathcal{T}_{X^*}$ denotes the tangent space of $\Smnp$ at $X^*$.
\end{definition}
For the unconstrained optimization problem \ref{slep}, following \cite{nocedal_numerical_1999}, we define the corresponding optimality conditions as follows.
\begin{definition}[\cite{nocedal_numerical_1999}]\label{def:o_slep}
    We call $X^*\in\Rnp$ a first-order stationary point of  \ref{slep} if $\nabla h(X^*)=0$, and call $X^*\in\Rnp$ a second-order stationary point of \ref{slep} if $X^*$ is a first-order stationary point of \ref{slep} and for any $D\in\Rnp$,  
    \[\inner{D,\nabla^2 h(X^*)[D]}\ge0.\]
Moreover, we call $X^*$ a $c$ a strict saddle point of \ref{slep} if it is a first-order stationary point of \ref{slep}, and there exist constants $c> 0$ and $D\in\Rnp$ such that
\[\inner{D,\nabla^2 h(X^*)[D]}\le-c\norm{D}^2.\]
\end{definition}

\section{Theoretical results}\label{theory}
In this section, we establish the equivalence between \ref{ocp} and \ref{slep}. In particular, we show that these two problems share the same first- and second-order stationary points in the neighborhood of the generalized Stiefel manifold. 
\subsection{Smoothness of \ref{slep}}\label{preslep} 
We first present Proposition \ref{Prop_grad_h}, which provides the closed-form expression for $\nabla h$. The result in Proposition \ref{Prop_grad_h} follows directly from computation; therefore, the proof is omitted for brevity. 
\begin{proposition}
    \label{Prop_grad_h}
    For every $X\in\mathbb{R}^{n\times p}$, we have
    \begin{equation}\label{eq:grad_h}
        \nabla h(X)=G(X)\left(\frac{3}{2}I_p-\frac{1}{2}X^\top MX\right)-MX\Psi\left(X^\top G(X)\right)+\beta MX\left(X^\top MX-I_p\right).
    \end{equation}
\end{proposition}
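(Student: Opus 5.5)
The plan is to compute $\nabla h$ by splitting $h(X)=g(X)+\frac{\beta}{4}\|X^\top MX-I_p\|_F^2$ and differentiating each term through its directional derivative. For an arbitrary direction $D\in\Rnp$, we expand $h(X+tD)$ to first order in $t$ and read off the gradient from the identity $\mathrm{D}h(X)[D]=\langle\nabla h(X),D\rangle$. Since $f$ is smooth by Assumption \ref{as:blanket} and $\mathcal{A}$ is a polynomial map, $h$ is smooth and the chain rule applies. The one structural fact we will use repeatedly is that $M$ and $X^\top MX$ are symmetric. This lets us move transposes freely inside traces, and it lets us replace any $p\times p$ matrix paired with a symmetric one by its symmetric part $\Psi(\cdot)$.

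\textbf{Penalty term.} Set $S:=X^\top MX-I_p\in\mathbb{S}^p$. Its derivative in direction $D$ is $D^\top MX+X^\top MD=2\Psi(X^\top MD)$. Hence
\[
\mathrm{D}\Big(\tfrac{\beta}{4}\|S\|_F^2\Big)[D]=\tfrac{\beta}{2}\langle S,2\Psi(X^\top MD)\rangle=\beta\langle S,X^\top MD\rangle=\beta\langle MXS,D\rangle,
\]
which produces the term $\beta MX(X^\top MX-I_p)$.

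\textbf{Composite term $g=f\circ\mathcal{A}$.} By the chain rule, $\mathrm{D}g(X)[D]=\langle G(X),\mathrm{D}\mathcal{A}(X)[D]\rangle$. Differentiating $\mathcal{A}(X)=\frac32X-\frac12X X^\top MX$ gives
\[
\mathrm{D}\mathcal{A}(X)[D]=D\Big(\tfrac32I_p-\tfrac12X^\top MX\Big)-X\Psi(X^\top MD).
\]
The first piece pairs with $G$ to give $\langle G(\frac32I_p-\frac12X^\top MX),D\rangle$, using the symmetry of $X^\top MX$. For the second piece we write
\[
\langle G,X\Psi(X^\top MD)\rangle=\langle X^\top G,\Psi(X^\top MD)\rangle=\langle\Psi(X^\top G),X^\top MD\rangle=\langle MX\Psi(X^\top G),D\rangle.
\]

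Summing the contributions from the two terms yields exactly \eqref{eq:grad_h}. The only step requiring care is the symmetrization identity $\langle A,\Psi(B)\rangle=\langle\Psi(A),B\rangle$, used to shift $\Psi$ onto $X^\top G$. Once that is in place, the remaining work is routine bookkeeping.
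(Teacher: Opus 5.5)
Your proposal is correct and is essentially the direct chain-rule computation the paper has in mind; the paper omits this proof as routine. All the steps check out, including $\mathrm{D}\mathcal{A}(X)[D]=D(\tfrac32 I_p-\tfrac12X^\top MX)-X\Psi(X^\top MD)$ and the symmetrization identity used to move $\Psi$ onto $X^\top G(X)$.
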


Based on the formulation of $\nabla h$ in Proposition \ref{Prop_grad_h}, we summarize the computational complexity of evaluating $\nabla h(X)$ in Table \ref{ta:comput_slepgra}. The computation of $\nabla h(X)$ involves only the evaluation of $\nabla f$ and standard matrix-matrix multiplications, making it computationally efficiently in practice. 
\begin{table}[htbp]
	\centering 
	\begin{tabular}{c|c|c}
		\hline\hline
		\multirow{5}{*}{$\nabla G(X)(\frac{3}{2}I_p-\frac{1}{2}X^\top MX)$}&$MX$&$2n^2p$\\
		&$X^\top M X$&$np^2$\\
		&$X(X^\top MX-I_p)$&$2np^2$\\
		&$G(X)=\nabla f(Y)|_{Y=X(\frac{3}{2}I_p-\frac{1}{2}X^\top M X)}$&$1FO$\\
		&$G(X)(\frac{3}{2}I_p-\frac{1}{2}X^\top M X)$&$2np^2$\\
		\hline
		\multirow{2}{*}{$MX\Psi(X^\top G(X))$}&$X^\top G(X)$&$2np^2$\\
		&$MX\Psi(X^\top G(X))$&$2np^2$\\
		\hline
		\multicolumn{2}{c|}{$MX(X^\top MX-I_p)$}&$2np^2$\\
		\hline
		\multicolumn{2}{c|}{In total}&$1FO+2n^2p+11np^2$\\
		\hline
		\hline
	\end{tabular}
\caption{Computational complexity of the first-order oracle in \ref{slep}. Here, $FO$ denotes the computational cost of computing the gradient of $f$.}
\label{ta:comput_slepgra}
\end{table}

In the remainder of this subsection, we present the closed-form expression of $\nabla^2 h$ under the assumption that the objective function $f$ is second-order differentiable.

\begin{assumption}[Second-order differentiability of $f$]\label{as:s}
   The Hessian operator $\nabla^2 f(X)$ exists for all $X\in\mathbb{R}^{n\times p}$.
\end{assumption}
The following proposition provides the closed-form expression of $\nabla^2 h$.
\begin{proposition}
Under Assumption \ref{as:s}, the action of the Hessian operator $\nabla^2 h(X)$ at $X\in\Rnp$ along direction $D\in\mathbb{R}^{n\times p}$ is given by
	\begin{equation}\label{eq:hessh}
	    \begin{aligned}
		&\nabla^2 h(X)[D]=\mathcal{J}_G(X)[D]\left(\frac{3}{2}I_p-\frac{1}{2}X^\top MX\right)-G(X)\Psi\left(X^\top MD\right)
		-MD\Psi\left(X^\top G(X)\right)\\
		&-MX\Psi\left(D^\top G(X)\right)
		-MX\Psi\left(X^\top \nabla G(X)[D]\right)+\beta MD\left(X^\top MX-I_p\right)+2\beta MX\Psi\left(X^\top MD\right),\\
	\end{aligned}
	\end{equation}
 where $\mathcal{J}_G(X)$ denotes the Jacobian operator of the mapping $G$ with respect to $X$, defined as
 $\mathcal{J}_G(X)[D]:=\nabla^2 f\left(X\left(\frac{3}{2}I_p-\frac{1}{2}X^\top MX\right)\right)\left[D\left(\frac{3}{2}I_p-\frac{1}{2}X^\top MX\right)-X\Psi\left(X^\top MD\right)\right].$
\end{proposition}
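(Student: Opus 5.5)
The plan is to differentiate the closed-form gradient of Proposition \ref{Prop_grad_h} term by term along a direction $D$, using the product rule and the chain rule. Write $\Phi(X):=\frac{3}{2}I_p-\frac{1}{2}X^\top MX$, so that $\mathcal{A}(X)=X\Phi(X)$ and $G(X)=\nabla f(\mathcal{A}(X))$. The first step is the directional derivatives of the basic building blocks. Since $M$ is symmetric,
\[
\mathrm{D}(X^\top MX)[D]=D^\top MX+X^\top MD=2\Psi(X^\top MD),
\]
and hence $\mathrm{D}\Phi(X)[D]=-\Psi(X^\top MD)$. By the product rule,
\[
\mathrm{D}\mathcal{A}(X)[D]=D\Phi(X)-X\Psi(X^\top MD).
\]
Under Assumption \ref{as:s}, the chain rule then gives
\[
\mathrm{D}G(X)[D]=\nabla^2 f(\mathcal{A}(X))\left[D\Phi(X)-X\Psi(X^\top MD)\right],
\]
which is exactly $\mathcal{J}_G(X)[D]$ as defined in the statement. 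I read the $\nabla G(X)[D]$ in the fifth term of \eqref{eq:hessh} as this same operator.

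Next I differentiate the three terms of \eqref{eq:grad_h} separately.

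For $G(X)\Phi(X)$, the product rule gives
\[
\mathcal{J}_G(X)[D]\,\Phi(X)-G(X)\Psi(X^\top MD).
\]

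For $-MX\Psi(X^\top G(X))$, the product rule gives three pieces. Differentiating $MX$ yields $-MD\Psi(X^\top G(X))$. Differentiating the $X^\top$ inside $\Psi$ yields $-MX\Psi(D^\top G(X))$. Differentiating $G$ inside $\Psi$ yields $-MX\Psi(X^\top\mathcal{J}_G(X)[D])$. Here I use that $\Psi$ is linear, so it commutes with differentiation.

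For $\beta MX(X^\top MX-I_p)$, the product rule gives
\[
\beta MD(X^\top MX-I_p)+2\beta MX\Psi(X^\top MD).
\]

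Summing the three contributions reproduces \eqref{eq:hessh}. That the result is the action of $\nabla^2 h(X)$ follows because $\nabla^2 h(X)[D]$ is by definition the directional derivative of $\nabla h$ at $X$ along $D$, and $h$ is twice differentiable as a composition of a twice-differentiable $f$ with polynomial maps.

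There is no real obstacle; the computation is routine. The only care needed is to track the symmetrization correctly: the derivative of $X^\top MX$ must produce the factor $2\Psi(\cdot)$, and the term $-\frac{1}{2}\cdot 2\Psi(X^\top MD)$ must come out with the correct sign in both $\mathcal{A}$ and $\Phi$. As a consistency check, one can verify that $\langle D,\nabla^2 h(X)[D]\rangle$ is symmetric in the sense expected of a Hessian.
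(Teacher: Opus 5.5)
Your proposal is correct and follows essentially the route the paper intends: the paper omits a written proof, treating the formula as a direct term-by-term differentiation of the gradient expression in Proposition \ref{Prop_grad_h}, which is what you carry out, and each of your seven terms matches the statement. Your reading of $\nabla G(X)[D]$ in the fifth term as $\mathcal{J}_G(X)[D]$ is also the right one.
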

\subsection{Equivalence between \ref{slep} and \ref{ocp}}
\label{relation}
In this subsection, we establish the equivalence between the penalty model \ref{slep} and the original problem \ref{ocp} in the neighborhood of the feasible region $\Smnp$. Specifically, we demonstrate that \ref{slep} and \ref{ocp} share the same first- and second-order stationary points, local minimizers, and satisfy the Łojasiewicz gradient inequality in this neighborhood. Furthermore, we demonstrate that the stationarity violation in \ref{ocp} is bounded by that in \ref{slep}. 

This subsection begins with several preliminary propositions.
The following proposition provides an estimate of the distance between a point in the neighborhood of the manifold and its images under the mappings $\mathcal{A}$ and $\mathcal{R}$.
\begin{proposition}\label{pr:dist}
        For any feasible point $X\in\mathcal{S}_M(n,p)$ and any point $Y$ in its neighborhood $\mathbb{B}(X,\rho_X)$, the following inequalities hold: $ \|Y-\mathcal{R}(Y)\|\le\frac{3}{2}\|Y\|_F\|Y^\top MY-I_p\|_2,  \|Y-\mathcal{A}(Y)\|_F\le\frac{1}{2}\|Y\|_F\|Y^\top MY-I_p\|_2,$ and $
         \left\|\mathcal{A}(Y)-\mathcal{R}(Y)\right\|_F\le
   \frac{3}{4}\|Y\|_F\|Y^\top MY-I_p\|_2^2.$
\end{proposition}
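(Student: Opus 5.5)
The plan is to write everything in terms of the symmetric $p\times p$ matrix $E:=Y^\top MY-I_p$ and to work in its eigenbasis. Since $Y\in\mathbb{B}(X,\rho_X)$, the definition of $\rho_X$ gives $\|E\|_2\le \frac{1}{6C_X}\le\frac16$, because $C_X=\|X\|_F+1\ge 1$. Hence $Y^\top MY=I_p+E\succ 0$ and $\mathcal{R}(Y)=Y(I_p+E)^{-1/2}$ is well defined.

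The three differences then become $Y$ times a matrix function of $E$:
\[
Y-\mathcal{A}(Y)=\tfrac12 YE,\qquad Y-\mathcal{R}(Y)=Y\bigl(I_p-(I_p+E)^{-1/2}\bigr),
\]
\[
\mathcal{A}(Y)-\mathcal{R}(Y)=Y\bigl(I_p-\tfrac12E-(I_p+E)^{-1/2}\bigr).
\]
I will use the standard inequality $\|YB\|_F\le\|Y\|_F\|B\|_2$ for every $p\times p$ matrix $B$. This reduces each claim to a spectral-norm bound on a scalar function $\phi(E)$ of the symmetric matrix $E$. Because $E$ is symmetric, $\|\phi(E)\|_2=\max_i|\phi(e_i)|$ over the eigenvalues $e_i\in[-\frac16,\frac16]$. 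The second inequality is therefore immediate with constant $\frac12$.

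For the other two, I need scalar bounds on the interval $|t|\le\frac16$.

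\textbf{First bound.} I need $|1-(1+t)^{-1/2}|\le\frac32|t|$. By the mean value theorem, the derivative of $(1+t)^{-1/2}$ is $-\frac12(1+t)^{-3/2}$. On this interval its magnitude is at most $\frac12(5/6)^{-3/2}\approx0.66$, which is comfortably below $\frac32$.

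\textbf{Third bound.} I need $|(1+t)^{-1/2}-1+\frac t2|\le\frac34t^2$. This follows from Taylor's theorem with Lagrange remainder. The second derivative is $\frac34(1+\xi)^{-5/2}$, so the remainder equals $\frac38(1+\xi)^{-5/2}t^2$ for some $\xi$ between $0$ and $t$. Since $(5/6)^{-5/2}\approx1.58$, the coefficient is at most about $0.59\le\frac34$. The quadratic bound then transfers to $\|\phi(E)\|_2\le\frac34\|E\|_2^2$.

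The only real care needed is to confirm that the neighborhood guarantees $\|E\|_2\le\frac16$, which keeps $1+t$ bounded away from zero, and to check these two numerical constants. Once both are verified, the statement follows.
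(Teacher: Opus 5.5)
Your proposal is correct and follows essentially the same route as the paper. Both arguments write each difference as $Y$ times a spectral function of $Y^\top MY$, apply $\|YB\|_F\le\|Y\|_F\|B\|_2$, and bound that function on the eigenvalue range $[\frac56,\frac76]$ guaranteed by $\rho_X$. The only difference is that you get the scalar estimates from the mean value theorem and the Lagrange remainder, whereas the paper uses exact factorizations such as $s-1=(s^2-1)/(s+1)$ and $\frac32 s-\frac12 s^3-1=-\frac12(s+2)(s-1)^2$ with $s=\sqrt{1+t}$; your constants ($\approx 0.66$ and $\approx 0.59$) are valid and sit well within $\frac32$ and $\frac34$.
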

\begin{proof}
From the definition of $\rho_X$, the eigenvalues of $Y^\top MY$ lie within the interval
\(\bigl[\tfrac{5}{6},\tfrac{7}{6}\bigr]\),
which guarantees that both $(Y^\top MY)^{1/2}$ and $(Y^\top MY)^{-1/2}$ are well defined and uniformly bounded. 
These bounds can be verified by direct calculation, as follows:
\[  \begin{aligned}
     \|Y-\mathcal{R}(Y)\|_F&=\left\|Y-Y\left(Y^\top MY\right)^{-\frac{1}{2}}\right\|_F=\left\|Y\left(Y^\top MY\right)^{-\frac{1}{2}}\left(\left(Y^\top MY\right)^{\frac{1}{2}}-I_p\right)\right\|_2\\
        &=\left\|Y\left(Y^\top MY\right)^{-\frac{1}{2}}\left(\left(Y^\top MY\right)^{\frac{1}{2}}+I_p\right)^{-1}\left(Y^\top MY-I_p\right)\right\|_2\\
        &\le\left\|Y\right\|_F\left\|\left(Y^\top MY\right)^{-\frac{1}{2}}\left(\left(Y^\top MY\right)^{\frac{1}{2}}+I_p\right)^{-1}\right\|_2\|Y^\top MY-I_p\|_2\overset{(i)}{\le}\frac{3}{2}\|Y\|_F\|Y^\top MY-I_p\|_2,
\end{aligned}\]
 where the inequality $(i)$ follows from
\[\begin{aligned}
    \left\|\left(Y^\top MY\right)^{-\frac{1}{2}}\left(\left(Y^\top MY\right)^{\frac{1}{2}}+I_p\right)^{-1}\right\|_2\le\frac{1}{\lambda_{\min}^{\frac{1}{2}}(X^\top MX)(\lambda_{\min}^{\frac{1}{2}}(X^\top MX)+1)}\le\frac{3}{2}.
\end{aligned}\]
Similiarly, for the bound on $ \|Y-\mathcal{A}(Y)\|_F$, we obtain
    \[\begin{aligned}
        \|Y-\mathcal{A}(Y)\|_F&=\left\|Y-Y\left(\frac{3}{2}I_p-\frac{1}{2}Y^\top MY\right)\right\|_F\le\frac{1}{2}\|Y\|_F\|Y^\top MY-I_p\|_2.\\
\end{aligned}\]
Finally, for the bound on $\|\mathcal{A}(Y)-\mathcal{R}(X)\|_F$, we have
\[\begin{aligned}
     \left\|\mathcal{A}(Y)-\mathcal{R}(X)\right\|_F
    &=\left\|Y(Y^\top MY)^{-\frac{1}{2}}\left(\frac{3}{2}(Y^\top MY)^{\frac{1}{2}}-\frac{1}{2}(Y^\top MY)^{\frac{3}{2}}-I_p\right)\right\|_F\\
    &=\frac{1}{2}\left\|Y(Y^\top MY)^{-\frac{1}{2}}\left((Y^\top MY)^{\frac{1}{2}}+2I_p\right)\left((Y^\top MY)^{\frac{1}{2}}-I_p\right)^2\right\|_F\\
    &=\frac{1}{2}\left\|Y(Y^\top MY)^{-\frac{1}{2}}\left((Y^\top MY)^{\frac{1}{2}}+2I_p\right)\left(Y^\top MY\right)^{\frac{1}{2}}+I_p)^{-2}\left(Y^\top MY-I_p\right)^2\right\|_F\\
       &=\frac{1}{2}\left\|Y\right\|_F\left\|\left(I_p+2(Y^\top MY)^{-\frac{1}{2}}\right)\left(\left(Y^\top MY\right)^{\frac{1}{2}}+I_p\right)^{-2}\right\|_2\left\|Y^\top MY-I_p\right\|_2^2\\
    &\overset{(ii)}{\le}\frac{3}{4}\|Y\|_F\|Y^\top MY-I_p\|_2^2,
\end{aligned}\]   
Where inequality $(ii)$ follows from
\[\begin{aligned}
    &\left\|\left(I_p+2(Y^\top MY)^{-\frac{1}{2}}\right)\left(\left(Y^\top MY\right)^{\frac{1}{2}}+I_p\right)^{-2}\right\|_2
\le\frac{1+2\lambda_{\min}^{-\frac{1}{2}}(X^\top MX)}{\lambda_{\min}^{\frac{1}{2}}(X^\top MX)(\lambda_{\min}^{\frac{1}{2}}(X^\top MX)+1)^2}\le\frac{3}{2}.
\end{aligned}\]
The proof is complete.
\end{proof}
Based on this proposition and the definition of $\rho_X$, we immediately obtain the following corollary.
\begin{proposition}\label{pr:ar}
     For any feasible point $X\in\mathcal{S}_M(n,p)$ and any point $Y$ in its neighborhood $\mathbb{B}(X,\rho_X)$, 
the images of $Y$ under the mappings $\mathcal{A}$ and $\mathcal{R}$ remain within the unit neighborhood of $X$; that is,  $\mathcal{A}(Y),\mathcal{R}(Y)\in\mathbb{B}(X,1)$.
\end{proposition}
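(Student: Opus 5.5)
We plan to apply the triangle inequality $\|\mathcal{A}(Y)-X\|_F\le\|\mathcal{A}(Y)-Y\|_F+\|Y-X\|_F$, and the analogous one for $\mathcal{R}(Y)$. Each term will then be bounded using Proposition \ref{pr:dist} together with the two defining properties of $\rho_X$: namely $\rho_X\le\tfrac12$, and $\|Y^\top MY-I_p\|_2\le\frac{1}{6C_X}$ for every $Y\in\mathbb{B}(X,\rho_X)$. The second property holds on the closed ball after a limiting argument, by continuity of $Y\mapsto\|Y^\top MY-I_p\|_2$. Since $\|Y-X\|_F\le\rho_X\le\frac12$, it suffices to show that $\|Y-\mathcal{A}(Y)\|_F\le\frac12$ and $\|Y-\mathcal{R}(Y)\|_F\le\frac12$.

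For the first step we control $\|Y\|_F$. From $\|Y\|_F\le\|X\|_F+\|Y-X\|_F\le\|X\|_F+\tfrac12<C_X$ we get $\|Y\|_F\le C_X$. Proposition \ref{pr:dist} then gives
\[
\|Y-\mathcal{A}(Y)\|_F\le\tfrac12\|Y\|_F\|Y^\top MY-I_p\|_2\le\tfrac12 C_X\cdot\tfrac{1}{6C_X}=\tfrac1{12},
\]
and
\[
\|Y-\mathcal{R}(Y)\|_F\le\tfrac32\|Y\|_F\|Y^\top MY-I_p\|_2\le\tfrac32 C_X\cdot\tfrac{1}{6C_X}=\tfrac14.
\]
Combining these with the triangle inequality yields $\|\mathcal{A}(Y)-X\|_F\le\tfrac12+\tfrac1{12}<1$ and $\|\mathcal{R}(Y)-X\|_F\le\tfrac12+\tfrac14<1$. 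Hence $\mathcal{A}(Y),\mathcal{R}(Y)\in\mathbb{B}(X,1)$.

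There is no serious obstacle. The only subtle point is that $\mathcal{R}(Y)$ must be well defined, which needs $Y^\top MY\succ0$. This follows from the eigenvalue bound $\lambda(Y^\top MY)\in[\tfrac56,\tfrac76]$ established at the start of the proof of Proposition \ref{pr:dist}, because $C_X\ge1$.

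A further detail is that the first bound in Proposition \ref{pr:dist} is stated without specifying the norm. We will read it as the Frobenius norm, which is what its proof actually gives, since $\|Y\cdots\|_2\le\|Y\|_F\|\cdot\|_2$. Should only the spectral-norm version be available, the margin absorbs it: the $\mathcal{R}$ estimate leaves room up to $\tfrac12$, whereas we only use $\tfrac14$.
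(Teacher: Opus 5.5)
Your argument is correct and follows the paper's route: the paper states this as an immediate consequence of Proposition~\ref{pr:dist} and the definition of $\rho_X$, and your triangle-inequality computation (using $\|Y\|_F\le C_X$ and $\|Y^\top MY-I_p\|_2\le\frac{1}{6C_X}$ to get $\frac12+\frac1{12}$ and $\frac12+\frac14$) supplies exactly those details. One correction to your closing aside: a bound on $Y-\mathcal{R}(Y)$ in the spectral norm alone would convert to Frobenius only at the cost of a factor up to $\sqrt{p}$, which your factor-of-two margin does not absorb in general; this does not affect your proof, since the proof of Proposition~\ref{pr:dist} already gives the Frobenius bound via $\|YB\|_F\le\|Y\|_F\|B\|_2$.
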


The following proposition characterizes a key property of the gradient $\nabla g(X)$.
\begin{proposition}
For any $Y\in\mathbb{R}^{n\times p}$, the following identity holds:
\begin{equation}\label{eq:gradh_inner}
    \left\langle\nabla h(Y),\,Y\left(Y^\top MY-I_p\right)\right\rangle
    = \left\langle\beta X^\top MX-\frac{3}{2}\langle \Psi\!\left(Y^\top G(Y)\right),\,\left(Y^\top MY-I_p\right)^2\right\rangle.
\end{equation}
\end{proposition}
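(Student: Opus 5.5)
The plan is to prove the identity by direct substitution of the closed form of $\nabla h$ from Proposition \ref{Prop_grad_h}, reading the $X$ on the right-hand side as $Y$ (presumably a typo). The target is then
\[
\left\langle\nabla h(Y),\,Y\left(Y^\top MY-I_p\right)\right\rangle=\left\langle \beta Y^\top MY-\tfrac{3}{2}\Psi\left(Y^\top G(Y)\right),\,\left(Y^\top MY-I_p\right)^2\right\rangle .
\]
To keep the bookkeeping short, I abbreviate $S:=Y^\top MY$, $E:=S-I_p$ and $W:=Y^\top G(Y)$. The key observation is that $S$, $E$ and $\tfrac{3}{2}I_p-\tfrac{1}{2}S=I_p-\tfrac{1}{2}E$ are all symmetric and commute with each other, since they are polynomials in $S$.

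\textbf{Step 1 (expand).} By Proposition \ref{Prop_grad_h}, $\nabla h(Y)$ is a sum of three terms. I will pair each of them with $YE$ separately.

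\textbf{Step 2 (first term).} Using $\langle AB,C\rangle=\operatorname{tr}(B^\top A^\top C)$ and cyclicity of the trace,
\[
\left\langle G(Y)\left(I_p-\tfrac{1}{2}E\right),YE\right\rangle=\operatorname{tr}\left(W^\top E\left(I_p-\tfrac{1}{2}E\right)\right)=\left\langle W,E\left(I_p-\tfrac{1}{2}E\right)\right\rangle .
\]
Because $E(I_p-\tfrac{1}{2}E)$ is symmetric, $W$ may be replaced by $\Psi(W)$. This gives
\[
\left\langle G(Y)\left(I_p-\tfrac{1}{2}E\right),YE\right\rangle=\langle\Psi(W),E\rangle-\tfrac{1}{2}\langle\Psi(W),E^2\rangle .
\]

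\textbf{Step 3 (second term).} Again by cyclicity,
\[
-\left\langle MY\Psi(W),YE\right\rangle=-\operatorname{tr}\left(\Psi(W)\,Y^\top MY\,E\right)=-\langle\Psi(W),SE\rangle .
\]
Since $SE=(I_p+E)E=E+E^2$, this equals $-\langle\Psi(W),E\rangle-\langle\Psi(W),E^2\rangle$.

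\textbf{Step 4 (third term).} Similarly,
\[
\beta\langle MYE,YE\rangle=\beta\operatorname{tr}\left(E\,Y^\top MY\,E\right)=\beta\langle S,E^2\rangle .
\]

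\textbf{Step 5 (sum).} Adding Steps 2--4, the linear terms $\pm\langle\Psi(W),E\rangle$ cancel. The quadratic terms combine to $-\tfrac{3}{2}\langle\Psi(W),E^2\rangle$, and together with Step 4 this yields $\langle\beta S-\tfrac{3}{2}\Psi(W),E^2\rangle$, which is the claimed identity.

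There is no genuine obstacle here. The only care needed is the order of factors inside the traces, together with the facts that $E$ commutes with $S$ and that symmetric test matrices allow $W$ to be replaced by $\Psi(W)$. Everything else is routine trace algebra.
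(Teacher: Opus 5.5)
Your proof is correct and is essentially the paper's argument. The paper likewise substitutes the closed form of $\nabla h$, rewrites each pairing as a trace, replaces $Y^\top G(Y)$ by $\Psi(Y^\top G(Y))$ against symmetric polynomials in $Y^\top MY$, and combines terms. The only bookkeeping difference is that the paper merges $(\tfrac{3}{2}I_p-\tfrac{1}{2}S)-S=-\tfrac{3}{2}(S-I_p)$ in one step, whereas you cancel the terms linear in $E$ explicitly. You are also right that the $X$ on the right-hand side should read $Y$; that is what the paper's proof actually establishes.
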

\begin{proof}
     According to the expression for $\nabla h$ in \eqref{eq:grad_h}, we have
\[
\begin{aligned}
&\left\langle\nabla h\left(Y\right),Y\left(Y^{\top}MY-I_p\right)\right\rangle \\
= &\left\langle G\left(Y\right)\left(\frac{3}{2}I_p-\frac{1}{2}Y^{\top} M Y\right)-M Y\Psi\left(Y^{\top} G\left(Y\right)\right)+\beta MY(Y^{\top} MY),Y\left(Y^{\top} M Y-I_p\right)\right\rangle \\
= &\tr{} \left( \left(Y^{\top}MY-I_p\right)Y^{\top}G(Y)\left(\frac{3}{2}I_p-\frac{1}{2}Y^{\top} MY\right) \right) -\tr \left(\left(Y^{\top}MY-I_p\right) Y^{\top} MY\Psi\left(Y^{\top} G(Y)\right)\right) \\
&\quad +\beta\tr\left(\left(Y^{\top}MY-I_p\right) Y^{\top} MY\left(Y^{\top} MY-I_p\right)\right) \\
\overset{(i)}{=} &\tr\left(\Psi\left(Y^{\top} G\left(Y\right)\right)\left(\frac{3}{2}I_p-\frac{1}{2}Y^{\top} MY\right)\left(Y^{\top} MY-I_p\right)\right) -\tr\left(\Psi\left(Y^{\top} G\left(Y\right)\right)Y^{\top} MY\left(Y^{\top} MY-I_p\right)\right) \\
&\quad +\beta\tr\left(Y^{\top} MY\left(Y^{\top} MY-I_p\right)^2\right) \\
= & -\frac{3}{2}\tr\left(\Psi\left(Y^{\top} G\left(Y\right)\right)\left(I_p-Y^{\top} MY\right)^2\right) +\beta\tr\left(Y^{\top} MY\left(Y^{\top} MY-I_p\right)^2\right) \\
= &\tr\left(\left[\beta Y^{\top} MY-\frac{3}{2}\Psi\left(Y^{\top} G\left(Y\right)\right)\right]\left(Y^{\top} M Y-I_p\right)^2\right) = \left\langle\beta Y^{\top} MY-\frac{3}{2}\Psi\left(Y^{\top} G(Y)\right),\left(Y^{\top} MY-I_p\right)^2\right\rangle
\end{aligned}
\]
where the equality $(i)$ follows from the identity $\tr(AB)=\tr(\Psi(A)B)$, which holds for any matrix $A\in\mathbb{R}^{p\times p}$ and symmetric matrix $B\in\mathbb{S}^p$. 
The proof is complete.
\end{proof}

\subsubsection{Equivalence on first-order stationary points}\label{relation1}
In this section, we discuss the relationship between the first-order stationary points of \ref{slep} and those of \ref{ocp}. We first present the following proposition to illustrate that both formulations share the same stationary structure over the manifold $\Smnp$.
 \begin{proposition}\label{pr:equa_fo_smnp}
Under Assumption \ref{as:blanket}, the penalty model \ref{ocp} and the original problem \ref{slep} share the same first-order stationary points on the manifold $\mathcal{S}_M(n,p)$.
 \end{proposition}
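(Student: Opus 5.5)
For $X\in\Smnp$ we have $X^\top MX=I_p$, hence $\mathcal{A}(X)=X$ and $G(X)=\nabla f(X)$. Substituting into \eqref{eq:grad_h}, the penalty term vanishes and
\[
\nabla h(X)=\nabla f(X)-MX\Psi\left(X^\top\nabla f(X)\right).
\]
So on the manifold, $\nabla h(X)=0$ is exactly the KKT system \eqref{kkt} with multiplier $\Lambda=\Psi(X^\top\nabla f(X))$. It therefore suffices to show that, for feasible $X$, $\operatorname{grad}f(X)=0$ holds if and only if $\nabla f(X)=MX\Psi(X^\top\nabla f(X))$. The plan is to rewrite \eqref{grad} in terms of $\bar M$ and $M$ and then prove the two implications.

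\textbf{Step 1: relate $\bar M$, $M$ and the projections.} From the eigen-decompositions, $M\mathcal{P}_1(Z)=MZ$ and $\bar M\mathcal{P}_1(Z)=MZ$ for every $Z$. The first holds because $M\mathcal{P}_2(Z)=0$. The second holds because $\bar M$ and $M$ agree on the range of $M$, which is where $\mathcal{P}_1$ projects columnwise. Consequently $\bar M^{-1}MZ=\mathcal{P}_1(Z)$. In addition, for feasible $X$ we have $\mathcal{P}_1(X)^\top\nabla f(X)$. I would like this to equal $X^\top\nabla f(X)$, but that requires $\mathcal{P}_2(X)^\top\nabla f(X)=0$, which is not automatic. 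This is the main obstacle, and I handle it in the two directions below.

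\textbf{Step 2: ($\nabla h(X)=0\Rightarrow \operatorname{grad}f(X)=0$).} If $\nabla f(X)=MX\Lambda$ with $\Lambda=\Psi(X^\top\nabla f(X))$, then the range of $\nabla f(X)$ lies in the range of $M$. Hence $\mathcal{P}_2(X)^\top\nabla f(X)=0$, and so
\[
\Psi\left(\mathcal{P}_1(X)^\top\nabla f(X)\right)=\Psi\left(X^\top\nabla f(X)\right)=\Lambda .
\]
Then $\bar M^{-1}\nabla f(X)=\bar M^{-1}MX\Lambda=\mathcal{P}_1(X)\Lambda$, and \eqref{grad} gives $\operatorname{grad}f(X)=0$.

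\textbf{Step 3: ($\operatorname{grad}f(X)=0\Rightarrow\nabla h(X)=0$).} Set $S:=\Psi(\mathcal{P}_1(X)^\top\nabla f(X))$. The condition $\operatorname{grad}f(X)=0$ reads $\bar M^{-1}\nabla f(X)=\mathcal{P}_1(X)S$, so
\[
\nabla f(X)=\bar M\mathcal{P}_1(X)S=MXS .
\]
This shows $\nabla f(X)$ has range in $\operatorname{range}(M)$, and therefore $X^\top\nabla f(X)=\mathcal{P}_1(X)^\top\nabla f(X)$. To identify the multiplier, multiply on the left by $X^\top$ and use $X^\top MX=I_p$: this gives $X^\top\nabla f(X)=S$, which is symmetric, so $\Psi(X^\top\nabla f(X))=S$. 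Hence $\nabla h(X)=\nabla f(X)-MXS=0$.

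The only delicate point is the projection identity $\bar M\mathcal{P}_1=M$ in Step 1, together with the interplay between the row space and the column space of $M$. I would verify this carefully through the decomposition with $V$. Every other step is a direct substitution, and invoking Proposition \ref{Prop_grad_h} already covers the gradient formula.
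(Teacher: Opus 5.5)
Your argument is correct, but your converse direction takes a different route from the paper's. For $\operatorname{grad} f(X)=0\Rightarrow\nabla h(X)=0$ (your Step 3) you and the paper argue the same way. Stationarity gives $\nabla f(X)=\bar M\mathcal{P}_1(X)S=MXS$, which kills the $\mathcal{P}_2$-contribution; the paper phrases this as $\mathcal{P}_2(X)^\top\nabla f(X)=\mathcal{P}_2(X)^\top\bar M\operatorname{grad} f(X)=0$. For the converse, the paper first proves the identity
\[
\nabla h(X)=\bar M\bigl(\operatorname{grad} f(X)-\mathcal{P}_1(X)\Psi(\mathcal{P}_2(X)^\top\nabla f(X))\bigr),
\]
valid at every feasible $X$. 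It then argues that $\nabla h(X)=0$ equates the tangent vector $\operatorname{grad} f(X)\in\mathcal{T}_X$ with an element of $\mathcal{N}_X$, so both vanish by the $\bar M$-orthogonality of $\mathcal{T}_X$ and $\mathcal{N}_X$. You instead observe that $\nabla h(X)=0$ already reads $\nabla f(X)=MX\Lambda$. Hence the columns of $\nabla f(X)$ lie in the range of $M$, $\mathcal{P}_2(X)^\top\nabla f(X)=0$, and the two stationarity conditions become literally the same equation. Your version is more elementary and symmetric: both directions reduce to the single fact that either condition places $\nabla f(X)$ in the range of $M$, and it never needs the tangent/normal geometry. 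The paper's version produces an identity valid at all feasible points, stationary or not, which it reuses in Theorem \ref{th:star_bx} to compare $\nabla g(\mathcal{R}(Y))$ with $\operatorname{grad} f(\mathcal{R}(Y))$. Two cosmetic points: the sentence in your Step 1 ending with ``we have $\mathcal{P}_1(X)^\top\nabla f(X)$'' is unfinished, and the range remark in Step 3 is superfluous, since $X^\top\nabla f(X)=X^\top MXS=S$ directly.
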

 \begin{proof}
  For any $X\in\Smnp$, it follows from the feasibility of $X$ (i.e., $X\tp MX = I_p$) and the expression for $\nabla h(X)$ in \eqref{eq:grad_h} that 
      \begin{equation}\label{eq:relat_grad_smnp}
          \nabla h(X)=\bar{M}(\grad  f(X)-\mathcal{P}_1(X)\Psi(\mathcal{P}_2(X)^\top \nabla f(X))).
      \end{equation}
   If $X \in \Smnp$ is a first-order stationary point of the original problem \ref{ocp}, then by Definition \ref{def:o_ocp}, it holds that $0 = \grad  f(X)$. Moreover, using
    \begin{equation}\label{eq:p2}
        \mathcal{P}_2(X)^\top \nabla f(X)=\mathcal{P}_2(X)^\top\bar{M}(\grad  f(X)+\mathcal{P}_1(X)\Psi(\mathcal{P}_1(X)^\top \nabla f(X)))=\mathcal{P}_2(X)^\top\bar{M}\grad  f(X)=0,
    \end{equation}
    we obtain $\nabla h(X)=0$. Hence, $X$ is also a first-order stationary point of the penalty model \ref{slep}. 

   Conversely, if $X \in \Smnp$ is a first-order stationary point of the penalty model \ref{slep}, then by Definition \ref{def:o_slep}, we have $\nabla h(X)= 0$. Substituting into \eqref{eq:relat_grad_smnp} gives $\grad  f(X)-\mathcal{P}_1(X)\Psi(\mathcal{P}_2(X)^\top \nabla f(X))=0$. Since $\grad  f(X)\in\mathcal{T}_X$ and $\mathcal{P}_1(X)\Psi(\mathcal{P}_2(X)^\top \nabla f(X))\in\mathcal{N}_X$, it follows that $\grad  f(X)=0$ and $\mathcal{P}_1(X)\Psi(\mathcal{P}_2(X)^\top \nabla f(X))=0$. Therefore, by Definition \ref{def:o_ocp}, $X$ is also a first-order stationary point of \ref{ocp}. This completes the proof.  
 \end{proof} 
Furthermore, the following theorem shows that when the penalty parameter $\beta$ is sufficiently large, any infeasible first-order stationary point of \ref{slep} 
is bounded away from the manifold $\Smnp$.

 \begin{theorem}\label{th:kappaxmx}
	Under Assumption \ref{as:blanket}, for any $X\in\mathcal{S}_M(n,p)$, suppose $\beta>\beta_X$. Then, the infeasible first-order stationary point $X^*\in\mathbb{B}(X,1)\backslash\Smnp$ of \ref{slep} satisfies $\lambda_{\min}({X^*}^\top M X^*)\le \frac{1}{6}.$
 \end{theorem}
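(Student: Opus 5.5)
Suppose $X^*\in\mathbb{B}(X,1)\setminus\Smnp$ satisfies $\nabla h(X^*)=0$; we show $\lambda_{\min}({X^*}^\top MX^*)\le\frac16$. The natural tool is the identity \eqref{eq:gradh_inner} with $Y=X^*$. Since $\nabla h(X^*)=0$, its left-hand side vanishes, so
\[
0=\left\langle \beta {X^*}^\top MX^*-\tfrac32\Psi\left({X^*}^\top G(X^*)\right),\left({X^*}^\top MX^*-I_p\right)^2\right\rangle .
\]
We argue by contradiction and assume $\lambda_{\min}({X^*}^\top MX^*)>\frac16$.

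Under this assumption, the matrix $\beta {X^*}^\top MX^*-\frac32\Psi({X^*}^\top G(X^*))$ is positive definite. Its first term satisfies $\beta{X^*}^\top MX^*\succeq \frac{\beta}{6}I_p$. The second term is bounded by
\[
\left\|\Psi\left({X^*}^\top G(X^*)\right)\right\|_2\le \|X^*\|_F\,\|G(X^*)\|_F\le C_X H_{X,G},
\]
using $\|X^*\|_F\le\|X\|_F+1=C_X$ and $X^*\in\mathbb{B}(X,1)$ together with the definition of $H_{X,G}$. Hence positive definiteness holds once $\beta>9C_XH_{X,G}$.

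Next, $({X^*}^\top MX^*-I_p)^2$ is positive semidefinite and nonzero, because $X^*$ is infeasible. The trace inner product of a positive definite matrix with a nonzero PSD matrix is strictly positive. This contradicts the displayed identity, so $\lambda_{\min}({X^*}^\top MX^*)\le\frac16$.

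The main obstacle is matching the threshold with $\beta_X$. The definition of $\beta_X$ contains $6H_{X,G}$ and $6H_{X,\nabla f}C_X$, not $9C_XH_{X,G}$. So the constants must be arranged to fit these entries, either by a sharper bound on $\Psi({X^*}^\top G(X^*))$ or by using the term $6H_{X,\nabla f}C_X$.

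To get a sharper bound, I would first try to estimate $\Psi({X^*}^\top G(X^*))$ through $\mathcal{A}(X^*)^\top G(X^*)$. Proposition \ref{pr:ar} places $\mathcal{A}(X^*)$ in $\mathbb{B}(X,1)$, but only when $X^*\in\mathbb{B}(X,\rho_X)$; for a general point of $\mathbb{B}(X,1)$, $G(X^*)=\nabla f(\mathcal{A}(X^*))$ is controlled directly by $H_{X,G}$. I would also replace $\lambda_{\min}>\frac16$ by the weighted estimate
\[
\left\langle {X^*}^\top MX^*,\,E^2\right\rangle\ge \lambda_{\min}\,\|E\|_F^2,\qquad E:={X^*}^\top MX^*-I_p,
\]
and compare it with $\frac32\|\Psi({X^*}^\top G(X^*))\|_2\|E\|_F^2$.

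If the constants still do not close exactly, I would accept the needed lower bound on $\beta$ as a multiple of $H_{X,G}C_X$. The remaining adjustment is bookkeeping that absorbs the factor into the $\beta_X$ definition; the structure of the argument is unchanged.
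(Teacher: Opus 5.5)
Your argument is the paper's own: evaluate \eqref{eq:gradh_inner} at $X^*$, assume $\lambda_{\min}({X^*}^\top MX^*)>\frac16$, bound $\frac32\bigl|\langle\Psi({X^*}^\top G(X^*)),E^2\rangle\bigr|$ by $\frac32 C_XH_{X,G}\|E\|_F^2$ with $E:={X^*}^\top MX^*-I_p$, and contradict by positivity against the nonzero PSD matrix $E^2$. Carried out as you do, it is correct for $\beta>9C_XH_{X,G}$.

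The mismatch you flag is genuine, and the paper does not resolve it. Its proof reaches $\frac{\beta}{6}-H_{X,G}C_X$ only by dropping the factor $\frac32$ when it replaces $\frac32\|X^*\|_F\|G(X^*)\|_F$ by $H_{X,G}C_X$. It then asserts $\beta_X\ge 6H_{X,G}C_X$, which the definition of $\beta_X$ does not supply, since that definition lists $6H_{X,G}$ and $6H_{X,\nabla f}C_X$.

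The factor $9$ is what this chain of estimates actually needs: an eigenvalue of ${X^*}^\top MX^*$ just above $\frac16$ can carry almost all of $\|E\|_F^2$. So neither your write-up nor the paper's proves the statement with $\beta_X$ as literally defined. Your repair, adding $9H_{X,G}C_X$ to the maximum defining $\beta_X$, is the right one, and it leaves every other result that assumes $\beta\ge\beta_X$ intact. The speculative sharper bounds via $\mathcal{A}(X^*)$ are unnecessary.
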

\begin{proof}
We prove this theorem by contradiction. Assume that $\lambda_{\min}({X^*}^\top M X^*)>\frac{1}{6}$. It follows from Equation \eqref{eq:gradh_inner} that 
\[\begin{aligned}
& \left\langle\nabla h\left(X^*\right), X^*\left({X^*}^\top M X^*-I_p\right) \right\rangle \\
= & \beta\left\langle M X^*\left({X^*}^\top M X^*-I_p\right),X^*\left({X^*}^\top M X^*-I_p\right) \right\rangle - \frac{3}{2} \left\langle \Psi\left({X^*}^\top G\left(X^*\right)\right),\left({X^*}^\top M X^*-I_p\right)^2\right\rangle\\
\ge & \beta\left\langle M X^*\left({X^*}^\top M X^*-I_p\right),X^*\left({X^*}^\top M X^*-I_p\right) \right\rangle - \frac{3}{2}\left| \left\langle \Psi\left({X^*}^\top G\left(X^*\right)\right),\left({X^*}^\top M X^*-I_p\right)^2\right\rangle\right| \\
\ge & \beta\left\langle {X^*}^\top MX^*,\left({X^*}^\top M X^*-I_p\right)^2\right\rangle - \frac{3}{2} \left\langle \|X^*\|_F\|G\left(X^*\right)\|_FI_p,\left({X^*}^\top M X^*-I_p\right)^2\right\rangle \\
\ge & \left\langle \beta{X^*}^\top M X^*-H_{X,G}C_XI_p,\left({X^*}^\top MX^*-I_p\right)^2 \right\rangle\\
\ge & \left\langle(\beta \lambda_{\min}({X^*}^\top M X^*)-H_{X,G}C_XI_p)I_p,\left({X^*}^\top MX^*-I_p\right)^2 \right\rangle\\
\ge & \left\langle\left(\frac{\beta}{6}-H_{X,G}C_XI_p\right)I_p,\left({X^*}^\top MX^*-I_p\right)^2 \right\rangle\overset{(i)}{>}0,\\
\end{aligned}\]
where the inequality $(i)$ follows from the assumption that $\beta>\beta_X\ge 6H_{X,G}C_X$.
This contradicts the fact that $X^*$ is a first-order stationary point of \ref{slep}. Hence, it must hold that $\lambda_{\min}({X^*}^\top M X^*)\le \frac{1}{6}$.
 The proof is complete.
 \end{proof}

The following result, which follows directly from the theorem above, establishes the equivalence between the first-order stationary points 
of the penalty model \ref{slep} and those of the original problem \ref{ocp} 
in a neighborhood of an arbitrary feasible point.
\begin{corollary}\label{co:fo}
    Under Assumption \ref{as:blanket}, suppose $X\in\mathcal{S}_M(n,p)$ and  $\beta>\beta_{X}$. Then, \ref{slep} and \ref{ocp} share the same first-order stationary points in $\mathbb{B}(X,\rho_X)$.
\end{corollary}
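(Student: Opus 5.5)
The plan is to combine Proposition \ref{pr:equa_fo_smnp} with Theorem \ref{th:kappaxmx}. The idea is to show that inside the ball $\mathbb{B}(X,\rho_X)$ every first-order stationary point of \ref{slep} is feasible. Once that holds, the two sets of stationary points coincide there.

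\textbf{Inclusion from \ref{ocp} to \ref{slep}.} Let $Y\in\mathbb{B}(X,\rho_X)$ be a first-order stationary point of \ref{ocp}. Then $Y\in\Smnp$ by definition. Proposition \ref{pr:equa_fo_smnp} shows $\nabla h(Y)=0$, so $Y$ is stationary for \ref{slep}. This direction needs no assumption on $\beta$.

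\textbf{Inclusion from \ref{slep} to \ref{ocp}.} Let $Y\in\mathbb{B}(X,\rho_X)$ satisfy $\nabla h(Y)=0$, and suppose for contradiction that $Y\notin\Smnp$.
\begin{enumerate}[(i)]
\item Since $\rho_X\le \tfrac12<1$, we have $Y\in\mathbb{B}(X,1)\setminus\Smnp$. With $\beta>\beta_X$, Theorem \ref{th:kappaxmx} gives $\lambda_{\min}(Y^\top MY)\le\tfrac16$.
\item On the other hand, the definition of $\rho_X$ gives $\|Y^\top MY-I_p\|_2\le \frac{1}{6C_X}$.
\item Because $C_X=\|X\|_F+1\ge 1$, this yields $\lambda_{\min}(Y^\top MY)\ge 1-\frac{1}{6C_X}\ge \tfrac56>\tfrac16$. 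This is the same eigenvalue localization already used in the proof of Proposition \ref{pr:dist}.
\end{enumerate}
Steps (i) and (iii) contradict each other, so $Y\in\Smnp$. Proposition \ref{pr:equa_fo_smnp} then shows $Y$ is a first-order stationary point of \ref{ocp}.

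\textbf{Main subtlety.} The only delicate point is making sure the ball $\mathbb{B}(X,\rho_X)$ lies inside the region where Theorem \ref{th:kappaxmx} applies, namely $\mathbb{B}(X,1)$. The cap $\rho_X\le\tfrac12$ guarantees this. We also need the $\rho_X$-bound on $\|Y^\top MY-I_p\|_2$ to hold on the closed ball, which is a mild continuity check on the supremum in the definition of $\rho_X$. Otherwise the argument is a direct combination of the two earlier results.
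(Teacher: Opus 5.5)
Your proposal is correct and follows the paper's own proof. Theorem \ref{th:kappaxmx} forces any infeasible stationary point in $\mathbb{B}(X,1)$ to have $\lambda_{\min}\le\tfrac16$, the definition of $\rho_X$ forces $\lambda_{\min}(Y^\top MY)\ge\tfrac56$ on $\mathbb{B}(X,\rho_X)$, and Proposition \ref{pr:equa_fo_smnp} finishes; your explicit split into two inclusions and the continuity check on the closed ball are small refinements that the paper leaves implicit.
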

\begin{proof}
   From Theorem \ref{th:kappaxmx}, any infeasible first-order stationary point $X^*\in\mathbb{B}(X,1)\setminus\mathcal{S}_M(n,p)$ of the penalty model \ref{slep} satisfies $\lambda_{\min}({X^*}^\top M X^*) \le \frac{1}{6}.$ 
According to the definition of $\rho_X$, every matrix $Y\in\mathbb{B}(X,\rho_X)$ satisfies $\frac{5}{6} \le \lambda_{\min}(Y^\top M Y) \le \lambda_{\max}(Y^\top M Y) \le \frac{7}{6}.$ 
Therefore, an infeasible stationary point $X^*$ cannot lie in $\mathbb{B}(X,\rho_X)$. Consequently, all first-order stationary points of \ref{slep} contained in $\mathbb{B}(X,\rho_X)$ must be feasible, and hence coincide with those of the original problem \ref{ocp} by Proposition \ref{pr:equa_fo_smnp}. This completes the proof.
\end{proof}

\subsubsection{Equivalence on second-order stationary points}\label{relation2}
 In this section, we establish the equivalence between the second-order stationary points of \ref{slep} and \ref{ocp} in the neighborhood of an arbitrary feasible point. In particular, the following proposition shows that any infeasible first-order stationary point of \ref{slep} becomes a saddle point in \ref{slep} when the penalty parameter $\beta$ is sufficiently large.

\begin{proposition}\label{pr:info}
	Under Assumptions \ref{as:blanket} and \ref{as:s}, for any feasible point $X\in\mathcal{S}_M(n,p)$ and penalty parameter $\beta\ge\beta_X$, every infeasible first-order stationary point $X^*\in\mathbb{B}(X,1)$ of the penalty model \ref{slep} is a $(\beta/4C_X^2)$-strict saddle point.
\end{proposition}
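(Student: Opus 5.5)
The plan is to exhibit one explicit direction $D$ along which the quadratic form $\inner{D,\nabla^2 h(X^*)[D]}$ is at most $-\frac{\beta}{4C_X^2}\norm{D}_F^2$. I would start from the closed form \eqref{eq:hessh}. By Theorem \ref{th:kappaxmx}, $\lambda:=\lambda_{\min}({X^*}^\top MX^*)\le\frac16$. Let $v\in\mathbb{R}^p$ be a unit eigenvector with ${X^*}^\top MX^*v=\lambda v$.

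\textbf{Step 1: the penalty terms.} Write $S:={X^*}^\top MX^*$. The two $\beta$-terms of \eqref{eq:hessh} contribute
\[
\beta\Bigl(\inner{MD(S-I_p),D}+2\norm{\Psi({X^*}^\top MD)}_F^2\Bigr).
\]
I would first try $D=X^*vv^\top$. Then ${X^*}^\top MD=\lambda vv^\top$, and the bracket equals $\lambda(\lambda-1)+2\lambda^2=\lambda(3\lambda-1)\le-\frac{\lambda}{2}$. Since $X^*\in\mathbb{B}(X,1)$, we have $\norm{D}_F^2\le\norm{X^*}_F^2\le C_X^2$, which is where the factor $C_X^2$ in the threshold comes from.

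\textbf{Step 2: the degenerate case.} The difficulty is that Step 1 is only useful when $\lambda$ is bounded away from $0$. When $\lambda$ is tiny I would switch to $D=uv^\top$, choosing $u$ with ${X^*}^\top Mu=0$ and $Mu\neq0$. Such a $u$ exists because $\operatorname{rank}(M)=r\ge p$ and $\operatorname{rank}(MX^*)<p$ when $\lambda=0$; more generally I would perturb $X^*v$ inside its $M$-range. With this $D$ the $\Psi$-term vanishes and the bracket becomes $(\lambda-1)u^\top Mu\le-\frac56u^\top Mu$. Blending the two directions, for instance taking $D=(X^*v+tu)v^\top$ and optimizing over $t$, should yield a bracket of order $-\frac14\norm{D}_F^2/C_X^2$ uniformly in $\lambda\in[0,\frac16]$.

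\textbf{Step 3: the non-penalty terms.} These are the remaining terms of \eqref{eq:hessh}: $\mathcal{J}_G$, $G\Psi(\cdot)$, $MD\Psi({X^*}^\top G)$, $MX^*\Psi(D^\top G)$, and $MX^*\Psi({X^*}^\top\nabla G[D])$. Their combined contribution is $\inner{D,\nabla^2 g(X^*)[D]}$ plus the quadratic-form contribution of the $G$-terms. It can be bounded by $L_{X,\nabla g}\norm{D}_F^2$ together with terms controlled by $H_{X,G}C_X\norm{D}_F^2$. I would also use stationarity, $\nabla h(X^*)=0$, through the identity \eqref{eq:gradh_inner}, to trade the terms involving $\Psi({X^*}^\top G(X^*))$ against $\beta S(S-I_p)^2$ in the eigendirection $v$. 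The definition of $\beta_X$ then absorbs all of these: $\beta\ge4L_{X,\nabla g}C_X^2$ and $\beta\ge6H_{X,G}$ leave at most half of the negative $\beta$-contribution consumed, giving $-\frac{\beta}{4C_X^2}\norm{D}_F^2$.

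\textbf{Main obstacle.} I expect the hard step to be Step 2. The plain direction $X^*vv^\top$ degenerates as $\lambda\to0$, and the constant in the statement contains no spectral information about $M$. So the alternative direction must have its norm controlled purely through $\norm{X^*}_F\le C_X$. I would first try the one-parameter family above, bounding $\norm{u}$ by building $u$ from columns of $X^*$ projected by $\mathcal{P}_1$.
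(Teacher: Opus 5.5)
\textbf{There is a genuine gap in Step 2.} Your Step 1 direction $D=X^*vv^\top$ is exactly the paper's direction. The paper's whole proof is Step 1 plus the bound $\inner{D,\nabla^2 g(X^*)[D]}\le L_{X,\nabla g}\norm{D}_F^2$, and it asserts $\beta\lambda(3\lambda-1)\le-\frac{\beta}{2C_X^2}\norm{D}_F^2$ ``because $\lambda\le\frac16$''. Your worry is justified. That step tacitly needs $\lambda\ge\|X^*v\|_2^2/C_X^2$, which fails, e.g.\ when $X^*v\in\operatorname{Null}(M)\setminus\{0\}$. It also fails at $X^*=0$, which is stationary whenever $\nabla f(0)=0$ and lies in $\mathbb{B}(X,1)$ if $\|X\|_F\le 1$; there $D=0$.

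Your Step 2 does not close this gap. If $u=X^*c$, or $u=\mathcal{P}_1(X^*)c$ (note $M\mathcal{P}_1(X^*)=MX^*$), then ${X^*}^\top Mu=Sc$ and $u^\top Mu=c^\top Sc$. So ${X^*}^\top Mu=0$ forces $Mu=0$: columns of $X^*$ can never produce your $u$. If $r=p$ and $\lambda>0$, then $MX^*$ has full column rank and $\operatorname{range}(MX^*)=\operatorname{range}(M)$, so no $u$ with ${X^*}^\top Mu=0$ and $Mu\neq 0$ exists at all. Even when such a $u$ exists, nothing bounds $u^\top Mu/\|u\|_2^2$ below by a multiple of $C_X^{-2}$; in general you only get $\lambda_{+,\min}(M)$, which the constant does not see. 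The ``optimize over $t$'' blending is asserted, not shown, so the degenerate regime remains unproved.

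\textbf{The missing idea is to use the feasible reference point $X$.} It is the only object tied to $C_X$ that certifies curvature of $M$: for $u=Xa$ one has $u^\top Mu=\|a\|_2^2\ge\|u\|_2^2/C_X^2$. You also do not need ${X^*}^\top Mu=0$ exactly, only smallness. Since $K:={X^*}^\top MX$ is square,
\[
\sigma_{\min}(K)=\sigma_{\min}(K^\top)\le\|X^\top MX^*v\|_2\le\|M^{1/2}X\|_2\,\|M^{1/2}X^*v\|_2=\sqrt{\lambda}.
\]
Take $a$ a unit right singular vector of $K$ for $\sigma_{\min}(K)$, and set $D=Xav^\top\neq 0$. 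Your bracket then equals $(\lambda-1)+\|Ka\|_2^2+(v^\top Ka)^2\le 3\lambda-1\le-\frac12$. Because $\norm{D}_F\le\|X\|_2<C_X$, this gives $-\frac12\le-\frac{1}{2C_X^2}\norm{D}_F^2$. The bound holds uniformly for $\lambda\in[0,\frac16]$, with no case split.

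\textbf{Step 3 is more complicated than it needs to be.} Since $h=g+\frac{\beta}{4}c$ with $c(Y)=\|Y^\top MY-I_p\|_F^2$, every non-$\beta$ term of \eqref{eq:hessh} is exactly $\nabla^2 g(X^*)[D]$. Hence $\inner{D,\nabla^2 g(X^*)[D]}\le L_{X,\nabla g}\norm{D}_F^2\le\frac{\beta}{4C_X^2}\norm{D}_F^2$, which uses neither stationarity nor $H_{X,G}$. Adding the two bounds gives $\inner{D,\nabla^2 h(X^*)[D]}\le-\frac{\beta}{4C_X^2}\norm{D}_F^2$.
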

\begin{proof}
	Let $v$ be a unit eigenvector of ${X^*}^\top M X^*$ corresponding to its smallest eigenvalue, that is, $({X^*}^\top MX^*)v=\lambda_{\min}({X^*}^\top MX^*)v$.
Define $D:=X^*vv^\top$. From Taylor’s expansion of the function $c(X):=\|X^\top MX-I_p\|_F^2$ along the direction $D$, we obtain
\[
\begin{aligned}
c(X^*+tD)
&=\|(X^*+tD)^\top M(X^*+tD)-I_p\|_F^2\\
&=\|{X^*}^\top MX^*-I_p+2\lambda_{\min}({X^*}^\top MX^*)t vv^\top+\lambda_{\min}({X^*}^\top MX^*)t^2 vv^\top\|_F^2\\
&=\|{X^*}^\top MX^*-I_p\|_F^2
+4\lambda_{\min}({X^*}^\top MX^*)(\lambda_{\min}({X^*}^\top MX^*)-1)t\\
&\quad+4\lambda_{\min}({X^*}^\top MX^*)^2t^2
+2\lambda_{\min}({X^*}^\top MX^*)(\lambda_{\min}({X^*}^\top MX^*)-1)t^2+O(t^3)\\
&=4c(X^*)+4\lambda_{\min}({X^*}^\top MX^*)(\lambda_{\min}({X^*}^\top MX^*)-1)t\\
&\quad+4\lambda_{\min}({X^*}^\top MX^*)(3\lambda_{\min}({X^*}^\top MX^*)-1)\frac{t^2}{2}+O(t^3).
\end{aligned}
\]
Therefore,
\[
\left\langle D,\nabla^2 c(X^*)[D]\right\rangle
=4\lambda_{\min}({X^*}^\top MX^*)(3\lambda_{\min}({X^*}^\top MX^*)-1).
\]
Since $h(X)=g(X)+\frac{\beta}{4}c(X)$, we obtain
\[
\begin{aligned}
\left\langle D,\nabla^2 h(X^*)[D]\right\rangle
&=\left\langle D,\nabla^2 g(X^*)[D]\right\rangle
+\beta\left\langle D,\nabla^2 c(X^*)[D]\right\rangle\\
&\le \|\nabla^2 g(X^*)\|_F\|D\|_F^2
+\beta\lambda_{\min}({X^*}^\top MX^*)(3\lambda_{\min}({X^*}^\top MX^*)-1)\\
&\overset{(i)}{\le}(L_{X,\nabla g}-\frac{\beta}{2C_X^2})\|D\|_F^2
\le -\frac{\beta}{4C_X^2}\|D\|_F^2,
\end{aligned}
\]
Where the inequality $(i)$ follows $\lambda_{\min}({X^*}^\top MX^*)\le\frac{1}{6}$. 
Therefore, $X^*$ is a $(\beta/4C_X^2)$-strict saddle point of \ref{slep}. The proof is complete.
\end{proof}

The following proposition depicts essential properties of the Hessian $\nabla^2 h(X)$ at a feasible first-order stationary point. 
\begin{proposition}
    \label{pr:hessh}
	Under Assumptions \ref{as:blanket} and \ref{as:s}, for any feasible point $X\in\mathcal{S}_M(n,p)$, suppose $X^*\in\mathbb{B}(X,1)\cap\Smnp$ is a first-order stationary point of \ref{slep}. 
	Then, for any $T\in\mathcal{T}_{X^*}$ and $N\in\mathcal{N}_{X^*}$, the following hold:
 \begin{align}
     & \left\langle T,\nabla^2  h(X^*)[T]\right\rangle=\left\langle T,\operatorname{hess} f(X^*)[T]\right\rangle,\\
	    &\left\langle N,\nabla^2  h(X^*)[T]\right\rangle=0.
 \end{align}
	   	Moreover, when $\beta\ge\beta_X$, for any $0\neq N\in\mathcal{N}_{X^*}$ it follows that
	\begin{equation}\label{ww}
	    \left\langle N,\nabla^2 h(X^*)[N] \right\rangle>0.
	\end{equation}
\end{proposition}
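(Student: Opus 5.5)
The plan is to specialize the Hessian formula \eqref{eq:hessh} at the feasible first-order stationary point $X^*$ and simplify it with two facts. First, feasibility gives ${X^*}^\top MX^*=I_p$, hence $\mathcal{A}(X^*)=X^*$ and $G(X^*)=\nabla f(X^*)$, and every term carrying the factor $X^\top MX-I_p$ drops out. Second, the proof of Proposition \ref{pr:equa_fo_smnp} gives $\nabla f(X^*)=MX^*\Lambda$ with $\Lambda:=\Psi({X^*}^\top\nabla f(X^*))$. Since $M\mathcal{P}_2(X^*)=0$, we get $\mathcal{P}_2(X^*)^\top\nabla f(X^*)=0$, so $\Lambda=\Psi(\mathcal{P}_1(X^*)^\top\nabla f(X^*))$. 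This matches the multiplier in the Riemannian Hessian \eqref{hess}.

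\emph{Tangent part.} For $T\in\mathcal{T}_{X^*}$ we have $\Psi({X^*}^\top MT)=0$, so $\mathcal{J}_G(X^*)[T]=\nabla^2 f(X^*)[T]$, and
\[
\nabla^2h(X^*)[T]=\nabla^2 f(X^*)[T]-MT\Lambda-MX^*\Psi(T^\top\nabla f(X^*))-MX^*\Psi({X^*}^\top\nabla^2 f(X^*)[T]).
\]
Pairing with $T$, both terms of the form $MX^*\Psi(\cdot)$ give $\langle\Psi({X^*}^\top MT),\cdot\rangle=0$. What remains is $\langle T,\nabla^2 f(X^*)[T]-MT\Lambda\rangle=\langle T,\bar M^{-1}(\nabla^2 f(X^*)[T]-MT\Lambda)\rangle_{\bar M}$, which is the first identity.

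\emph{Cross term.} Write $N=\mathcal{P}_1(X^*)S$ with $S\in\mathbb{S}^p$. Since $MN=MX^*S$, we get $N^\top MX^*=S$. Set $K:={X^*}^\top MT$, which is skew. The two multiplier terms then give
\[
\langle S,K\Lambda\rangle+\langle S,\Psi(K^\top\Lambda)\rangle=\langle S,K\Lambda\rangle-\langle S,K\Lambda\rangle=0 .
\]
The two second-derivative terms combine to $\langle N-X^*S,\nabla^2 f(X^*)[T]\rangle=-\langle\mathcal{P}_2(X^*)S,\nabla^2 f(X^*)[T]\rangle$. \textbf{This is the step I expect to be the main obstacle.} The leftover does not vanish from the formulas alone when $M$ is singular. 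I would first try to show that it is killed by interpreting the pairing consistently in the $\bar M$-metric, i.e.\ using the $\bar M$-orthogonal splitting the paper uses for $\mathcal{N}_{X^*}$, together with \eqref{eq:p2}. If that fails, the claim should be restricted to normal directions with $\mathcal{P}_2(X^*)S=0$, or the definition of $N$ adjusted, and I would flag this.

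\emph{Normal curvature.} For \eqref{ww}, rather than expanding \eqref{eq:hessh} I would use the splitting $h=g+\frac{\beta}{4}c$ with $c(Y)=\|Y^\top MY-I_p\|_F^2$. At a feasible point, $\langle D,\nabla^2c(X^*)[D]\rangle=8\|\Psi({X^*}^\top MD)\|_F^2$, and for $D=N$ this equals $8\|S\|_F^2$. Moreover $\langle N,\nabla^2 g(X^*)[N]\rangle\ge -L_{X,\nabla g}\|N\|_F^2$ and $\|N\|_F\le\|X^*\|_F\|S\|_F\le C_X\|S\|_F$, because $X^*\in\mathbb{B}(X,1)$ and the projection is nonexpansive. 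Hence
\[
\langle N,\nabla^2h(X^*)[N]\rangle\ge(2\beta-L_{X,\nabla g}C_X^2)\|S\|_F^2 .
\]
This is positive for $S\ne0$ (equivalently $N\ne0$) as soon as $\beta\ge\beta_X\ge 4L_{X,\nabla g}C_X^2$.
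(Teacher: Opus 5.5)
Your computations are correct, and the obstacle you flag comes from the statement rather than your method. With $\mathcal{N}_{X^*}=\{\mathcal{P}_1(X^*)S:S\in\mathbb{S}^p\}$ as the paper defines it, the cross-term identity is false whenever $\mathcal{P}_2(X^*)\neq 0$. So your first rescue attempt cannot succeed, and \eqref{eq:p2} does not help either, because it constrains only $\nabla f$, not $\nabla^2 f$.

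A minimal counterexample: take $n=2$, $p=1$, $M=\operatorname{Diag}(1,0)$. Then $\bar M=I_2$, so every metric reinterpretation gives the same number. Let $X^*=(1,1)^\top$ and $f(Y)=\frac12\|Y-X^*\|_F^2$. Then $X^*$ is feasible with $\nabla f(X^*)=0$, hence stationary for \ref{slep}, and \eqref{eq:hessh} gives
\[
\nabla^2 h(X^*)=\begin{pmatrix}1+2\beta&-1\\-1&1\end{pmatrix}.
\]
For $T=(0,1)^\top\in\mathcal{T}_{X^*}$ and $N=\mathcal{P}_1(X^*)=(1,0)^\top\in\mathcal{N}_{X^*}$ one gets $\langle N,\nabla^2h(X^*)[T]\rangle=-1$. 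This is exactly your leftover $-\langle\mathcal{P}_2(X^*)S,\nabla^2 f(X^*)[T]\rangle$.

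The paper's proof obtains $0$ because it silently takes $N=X^*S$ (it writes ``$N=X^*S\in\mathcal{N}_{X^*}$''), which is your third option. Your own computation then closes the argument at once:
\begin{itemize}
\item The multiplier terms depend on $N$ only through $MN=MX^*S$, so they still cancel.
\item The second-derivative terms equal $\langle N-X^*S,\nabla^2 f(X^*)[T]\rangle$, which is $0$ for $N=X^*S$.
\end{itemize}
This is all Theorem \ref{th:sopr_b1} needs. The set $\{X^*S:S\in\mathbb{S}^p\}$ is in general not $\bar M$-orthogonal to $\mathcal{T}_{X^*}$, since $\bar MX^*S=MX^*S+\mathcal{P}_2(X^*)S$. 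It is, however, a complement: every $D$ splits uniquely as $(D-X^*S)+X^*S$ with $S=\Psi({X^*}^\top MD)$. So the statement that actually holds, and that the paper proves, has $\{X^*S\}$ in place of $\mathcal{N}_{X^*}$. Your tangent identity is unaffected, including your self-contained derivation of $\Psi({X^*}^\top\nabla f(X^*))=\Psi(\mathcal{P}_1(X^*)^\top\nabla f(X^*))$ from $\nabla f(X^*)=MX^*\Lambda$.

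For \eqref{ww}, your route differs from the paper's, and both are valid.
\begin{itemize}
\item The paper expands \eqref{eq:hessh} along $N=X^*S$, where $\mathcal{J}_G(X^*)[N]=0$. This gives $2\beta\|S\|_F^2-3\operatorname{tr}(S^2{X^*}^\top\nabla f(X^*))$, and the paper concludes with the $6H_{X,\nabla f}C_X$ part of $\beta_X$. It needs only gradient bounds, but it depends on choosing $N=X^*S$. For $N=\mathcal{P}_1(X^*)S$, an extra term $\langle\mathcal{P}_2(X^*)S,\nabla^2 f(X^*)[\mathcal{P}_2(X^*)S]\rangle$ would survive.
\item You split $h=g+\frac{\beta}{4}c$ and use $\langle N,\nabla^2 c(X^*)[N]\rangle=8\|S\|_F^2$ together with $\langle N,\nabla^2 g(X^*)[N]\rangle\ge-L_{X,\nabla g}C_X^2\|S\|_F^2$. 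This uses the $4L_{X,\nabla g}C_X^2$ part of $\beta_X$ instead.
\end{itemize}
Your route does not care which complement is chosen. It relies only on ${X^*}^\top MN=S$ and $\|N\|_F\le C_X\|S\|_F$, and both hold for $X^*S$ and for $\mathcal{P}_1(X^*)S$.
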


\begin{proof}
Since $X^*$ is feasible, from the expression \eqref{eq:hessh} of the Hessian operator $\nabla^2 h$, for every $T\in\mathcal{T}_{X^*}$ and $N=X^*S\in\mathcal{N}_{X^*}$ we obtain
\[\begin{aligned}
    \nabla^2 h(X^*)[T]&=\nabla^2 f(X^*)[T]-MX^*\Psi({X^*}^\top \nabla^2 f(X)[T])-MT\Psi({ X^*}^\top\nabla f(X^*))-MX^*\Psi(T^\top\nabla f(X^*))\\
 \nabla^2 h(X^*)[N]&=-\nabla f(X^*)S-MN\Psi({X^*}^\top \nabla f(X^*))-MX^*\Psi(N^\top \nabla f(X^*))+2\beta MN.
\end{aligned}
\]
By direct calculation, we have
 \[\begin{aligned}
 &\left\langle T,\nabla^2 h(X^*)[T]\right\rangle\\
  =&\left\langle T,\nabla^2 f(X^*)[T]\right\rangle-\left\langle T,MT\Psi({X^*}^\top \nabla f(X^*))\right\rangle\\
  &-\left\langle T,MX^*\Psi({X^*}^\top \nabla^2 f(X)[T])\right\rangle-\left\langle T,MX^*\Psi(T^\top\nabla f(X^*))\right\rangle\\
 \overset{(i)}{=}&\left\langle T,\nabla^2 f(X^*)[T]\right\rangle-\left\langle T,MT\Psi({X^*}^\top \nabla f(X^*))\right\rangle\\
  \overset{(ii)}{=}&\left\langle T,\nabla^2 f(X^*)[T]\right\rangle-\left\langle T,MT\Psi(\mathcal{P}_1(X^*)^\top \nabla f(X^*))\right\rangle=\left\langle T,\operatorname{hess} f(X^*)[T]\right\rangle_{\bar{M}}.\\
		&\left\langle N, \nabla^2 h(X^*)[T]\right\rangle\\
		=&\left\langle N,\nabla^2 f(X^*)[T]-MX^*\Psi({X^*}^\top \nabla^2 f(X^*)[T])\right\rangle-\left\langle N, MT\Psi({X^*}^\top\nabla f(X^*))\right\rangle
	      \\&-\left\langle N, MX^*\Psi(T^\top \nabla f(X^*))\right\rangle\\
		=&\tr(S{X^*}^\top\nabla^2 f(X^*)[T])-\tr(S\Psi({X^*}^\top \nabla^2 f(X^*)[T]))-\tr(S{X^*}^\top M T\Psi({X^*}^\top \nabla f(X^*)))\\
&-\tr(S{X^*}^\top MX^*\Psi(T^\top \nabla f(X^*)))\\
		=&\tr(ST^\top M X^*\Psi({X^*}^\top \nabla f(X^*)))-\tr(S\Psi(T^\top \nabla f(X^*)))\\
		\overset{(iii)}{=}&\tr(ST^\top \nabla f(X^*))-\tr(ST^\top \nabla f(X^*))=0,\\
	&\left\langle N,\nabla^2 h(X^*)[N]\right\rangle\\ =&-\left\langle N,\nabla f(X^*)S\right\rangle-
	\left\langle N,	MN\Psi({X^*}^\top \nabla f(X^*)) \right\rangle -\left\langle N, MX^*\Psi(N^\top\nabla f(X^*))\right\rangle+2\beta\left\langle N,MN \right\rangle\\
	=&2\beta||S||_F^2-3\tr(S^2{X^*}^\top\nabla f(X^*))
	\ge(2\beta-3H_{X,\nabla f}C_X)\|S\|_F^2>0,
\end{aligned}\]
where the equation $(i)$ holds since for every $S\in\mathbb{S}^p$, $\langle T,MX^*S\rangle=\langle T,\bar{M}\mathcal{P}(X^*)S\rangle=\langle T,\mathcal{P}(X^*)S\rangle_{\bar{M}}=0$, the equation $(ii)$ follows from \eqref{eq:p2}, and the equation $(iii)$ holds
because $\nabla f(X^*)=MX^*\Psi({X^*}^\top \nabla f(X^*))$. Thus, when $\beta\ge\beta_X$, inequality \eqref{ww} holds, and the proof is complete.
\end{proof}
In the following theorem, based on Propositions \ref{pr:info} and \ref{pr:hessh}, we establish the equivalence between the second-order stationary points of \ref{ocp} and \ref{slep} within $\mathbb{B}(X,1), X\in\Smnp$.
\begin{theorem}\label{th:sopr_b1}
    Under Assumptions \ref{as:blanket} and \ref{as:s}, for any feasible point $X\in\mathcal{S}_M(n,p)$, suppose that the penalty parameter $\beta\ge\beta_X$; then \ref{slep} and \ref{ocp} share the same second-order stationary points in $\mathbb{B}(X,1)$.
\end{theorem}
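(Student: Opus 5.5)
We argue in both directions inside $\mathbb{B}(X,1)$, using Proposition \ref{pr:info} to rule out infeasible points and Proposition \ref{pr:hessh} to compare Hessians at feasible points.

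\emph{Second-order stationary points of \ref{slep} are second-order stationary for \ref{ocp}.} Let $X^*\in\mathbb{B}(X,1)$ be a second-order stationary point of \ref{slep}. First we show $X^*$ is feasible. If $X^*\notin\Smnp$, then by Proposition \ref{pr:info} it is a $(\beta/4C_X^2)$-strict saddle point. Hence some $D$ satisfies $\inner{D,\nabla^2 h(X^*)[D]}\le -\frac{\beta}{4C_X^2}\norm{D}^2<0$, which contradicts second-order stationarity. So $X^*\in\Smnp$. Since $\nabla h(X^*)=0$, Proposition \ref{pr:equa_fo_smnp} shows that $X^*$ is a first-order stationary point of \ref{ocp}. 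For any $T\in\mathcal{T}_{X^*}$, the first identity of Proposition \ref{pr:hessh} gives
\[
\inner{T,\operatorname{hess} f(X^*)[T]}_{\bar M}=\inner{T,\nabla^2 h(X^*)[T]}\ge 0.
\]
Therefore $X^*$ is second-order stationary for \ref{ocp} in the sense of Definition \ref{def:o_ocp}.

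\emph{Second-order stationary points of \ref{ocp} are second-order stationary for \ref{slep}.} Let $X^*\in\mathbb{B}(X,1)\cap\Smnp$ be second-order stationary for \ref{ocp}. By Proposition \ref{pr:equa_fo_smnp}, $\nabla h(X^*)=0$. Next we decompose an arbitrary direction $D\in\Rnp$ as $D=T+N$ with $T\in\mathcal{T}_{X^*}$ and $N\in\mathcal{N}_{X^*}$. This decomposition is available because $\mathcal{N}_{X^*}$ is the $\bar M$-orthogonal complement of $\mathcal{T}_{X^*}$.
\begin{itemize}
\item To check the orthogonality: for $N=\mathcal{P}_1(X^*)S$ we have $\inner{T,N}_{\bar M}=\tr(T^\top M X^* S)=0$, because $\bar M\mathcal{P}_1(X^*)=MX^*$ and $\Psi(T^\top MX^*)=0$.
\item The dimension count also works: $\mathcal{T}_{X^*}$ has codimension $p(p+1)/2$, and $S\mapsto \mathcal{P}_1(X^*)S$ is injective on $\mathbb{S}^p$ since $\mathcal{P}_1(X^*)^\top M\mathcal{P}_1(X^*)=I_p$.
\end{itemize}
Expanding the Hessian quadratic form and using symmetry of $\nabla^2 h(X^*)$ gives
\[
\inner{D,\nabla^2 h(X^*)[D]}=\inner{T,\nabla^2 h(X^*)[T]}+2\inner{N,\nabla^2 h(X^*)[T]}+\inner{N,\nabla^2 h(X^*)[N]}.
\]
We bound each term with Proposition \ref{pr:hessh}:
\begin{itemize}
\item the first term equals $\inner{T,\operatorname{hess} f(X^*)[T]}_{\bar M}\ge0$;
\item the cross term vanishes;
\item the last term is nonnegative, and strictly positive when $N\neq0$, given $\beta\ge\beta_X$.
\end{itemize}
Hence $\nabla^2 h(X^*)\succeq 0$, and $X^*$ is a second-order stationary point of \ref{slep}.

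\emph{Main obstacle.} The delicate step is the decomposition $\Rnp=\mathcal{T}_{X^*}\oplus\mathcal{N}_{X^*}$ when $M$ is singular. The $\bar M$-orthogonality and the injectivity argument above must be checked carefully. We must also make sure the normal space here uses $\mathcal{P}_1(X^*)$, whereas the proof of Proposition \ref{pr:hessh} wrote $N=X^*S$. At a feasible point, $MX^*S=M\mathcal{P}_1(X^*)S$, so the $M$-weighted terms agree. The remaining terms in that proof, such as $\inner{N,\nabla f(X^*)S}$ and $\tr(S\,\Psi(X^{*\top}\nabla^2 f(X^*)[T]))$, would need rechecking if they involve $\mathcal{P}_2(X^*)$. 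If they do, we would redo them using \eqref{eq:p2}, which kills $\mathcal{P}_2(X^*)^\top\nabla f(X^*)$.
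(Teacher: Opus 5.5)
Your two directions follow the paper's proof step for step: Proposition \ref{pr:info} excludes infeasible points, then Proposition \ref{pr:hessh} is applied to a splitting $D=T+N$. Your check that $\Rnp=\mathcal{T}_{X^*}\oplus\mathcal{N}_{X^*}$ is a $\bar M$-orthogonal direct sum is also correct. However, the worry in your last paragraph is a genuine gap, and \eqref{eq:p2} does not close it. The paper's proof has the same gap, because its proof of Proposition \ref{pr:hessh} silently takes $N=X^*S$.

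For $N=\mathcal{P}_1(X^*)S$ the cross term need not vanish when $M$ is singular. Write $N=X^*S-Z$ with $Z:=\mathcal{P}_2(X^*)S$. Since $MZ=0$, every term of $\nabla^2 h(X^*)[T]$ of the form $M(\cdot)$ is orthogonal to $Z$. Combining this with the paper's correct computation $\langle X^*S,\nabla^2 h(X^*)[T]\rangle=0$ gives $\langle N,\nabla^2 h(X^*)[T]\rangle=-\langle \mathcal{P}_2(X^*)S,\nabla^2 f(X^*)[T]\rangle$. This is curvature of $f$ along null-space directions (note $\operatorname{Null}(M)\subset\mathcal{T}_{X^*}$). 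Equation \eqref{eq:p2} only removes terms where $\mathcal{P}_2(X^*)$ meets $\nabla f(X^*)$, not where it meets $\nabla^2 f(X^*)$. Similarly, $\mathcal{J}_G(X^*)[N]=-\nabla^2 f(X^*)[\mathcal{P}_2(X^*)S]\neq 0$, so the paper's formula for $\nabla^2 h(X^*)[N]$ is valid only for $N=X^*S$.

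A concrete counterexample: take $n=2$, $p=1$, $M=\operatorname{Diag}(1,0)$ (so $\bar M=I_2$) and $f(x)=x_1x_2+\frac12 x_2^2$. Then $X^*=(1,-1)^\top$ is first-order stationary and $\nabla^2 h(X^*)=\begin{pmatrix}4+2\beta&1\\1&1\end{pmatrix}$. For $T=(0,1)^\top\in\mathcal{T}_{X^*}$ and $N=(1,0)^\top\in\mathcal{N}_{X^*}$ the cross term equals $1$.

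The repair is to give up orthogonality and use the complement that the paper's computation actually handles. Given $D$, set $S:=\Psi(X^{*\top}MD)$ and $T:=D-X^*S$. Since $X^{*\top}MX^*=I_p$, we get $\Psi(T^\top MX^*)=S-S=0$, so $T\in\mathcal{T}_{X^*}$. Hence $\Rnp=\mathcal{T}_{X^*}\oplus\{X^*S:S\in\mathbb{S}^p\}$ holds immediately, with no dimension count.

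For this splitting, at a feasible first-order stationary point, the three needed identities do hold:
\begin{itemize}
\item $\langle T,\nabla^2 h(X^*)[T]\rangle=\langle T,\operatorname{hess}f(X^*)[T]\rangle_{\bar M}$, by \eqref{eq:p2};
\item $\langle X^*S,\nabla^2 h(X^*)[T]\rangle=0$, by $\nabla f(X^*)=MX^*\Psi(X^{*\top}\nabla f(X^*))$;
\item $\langle X^*S,\nabla^2 h(X^*)[X^*S]\rangle=2\beta\|S\|_F^2-3\operatorname{tr}(S^2X^{*\top}\nabla f(X^*))\ge(2\beta-3H_{X,\nabla f}C_X)\|S\|_F^2\ge 0$.
\end{itemize}
With these in place of the $\mathcal{N}_{X^*}$ identities, your expansion of $\langle D,\nabla^2 h(X^*)[D]\rangle$ goes through verbatim. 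In the example above it gives $(d_1+d_2)^2+(3+2\beta)d_1^2$. Your forward direction needs no change, since it only uses the tangent identity.
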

\begin{proof}
According to Proposition \ref{pr:info}, when $\beta\ge\beta_X$, if $X^*\in\mathbb{B}(X,\rho_X)$ is a second-order stationary point of \ref{slep}, then $X^*\in\Smnp$. Hence, $X^*$ is also a first-order stationary point of \ref{ocp}. From Proposition \ref{pr:hessh}, we have  $\left\langle T,\operatorname{hess} f(X^*)[T]\right\rangle_{\bar{M}}=\left\langle T,\nabla^2  h(X^*)[T]\right\rangle,\forall T\in\mathcal{T}_X\ge0$.  Therefore, $X$ is a second-order stationary point of \ref{ocp}.
 Conversely, suppose $X^*\in\mathbb{B}(X,1)$ is a second-order stationary point of \ref{ocp}.
 For every $D\in\mathbb{R}^{n\times p}$, there exists $T\in\mathcal{T}_{X^*}$ and $N\in\mathcal{N}_{X^*}$ such that $D=T+N$.
From Proposition \ref{pr:hessh}, we have
 \[
 \begin{aligned}
     \left\langle D, \nabla^2 h(X^*)[D] \right\rangle
     &=\left\langle T,\nabla^2 h(X^*)[T] \right\rangle+2\left\langle N,\nabla^2 h(X^*)[T] \right\rangle+\left\langle N,\nabla^2 h(X^*)[N] \right\rangle\\
     &=\left\langle T,\operatorname{hess} f(X)[T] \right\rangle_{\bar{M}}+\left\langle N,\nabla^2 h(X^*)[N] \right\rangle
     \ge0
 \end{aligned}
 \]
 Thus, $X^*$ is a second-order stationary point of \ref{slep}. The proof is complete.

\end{proof}
\subsubsection{Equivalence on stationarity violation}\label{restation}
Note that \ref{ocp} is a Riemannian optimization problem, and its stationarity violation is characterized by $\|\grad f(X)\|_F$. However, because \ref{slep} is an unconstrained optimization problem, its stationarity violation is measured by $\|\nabla h(X)\|_F$. Therefore, in this section, we establish the relationship between the stationarity violations of \ref{slep} and \ref{ocp}, which is crucial for designing termination criteria and analyzing iteration complexity for algorithms developed based on \ref{slep}.

We begin our analysis with the following proposition. 
\begin{proposition}\label{pro:sta_slep}
     Under Assumption \ref{as:blanket}, for any feasible point $X\in\mathcal{S}_M(n,p)$ and any point $Y$ in its neighborhood $\mathbb{B}(X,\rho_X)$, suppose $\beta\ge\beta_{X}$. Then, it holds that
     \[\|\nabla h(Y)\|_F\ge \frac{1}{2\kappa^{\frac{1}{2}}(\bar{M})}\left\|
    \nabla g(Y)\right\|_F+\frac{\lambda_{\min}^{\frac{1}{2}}(\bar{M})\beta}{4}\left\|Y^\top MY-I_p\right\|_F.\]
\end{proposition}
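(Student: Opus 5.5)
The plan is to exploit the decomposition $\nabla h(Y)=\nabla g(Y)+\beta MY(Y^\top MY-I_p)$, which follows from $h=g+\frac{\beta}{4}\|X^\top MX-I_p\|_F^2$ and the expression \eqref{eq:grad_h}. Write $E:=Y^\top MY-I_p$. We first obtain a lower bound on $\|\nabla h(Y)\|_F$ in terms of $\beta\|E\|_F$ alone. We then recover $\|\nabla g(Y)\|_F$ by the triangle inequality, and finally combine the two bounds by a convex combination.

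\textbf{Step 1 (feasibility part).} Test $\nabla h(Y)$ against the direction $YE$ and use the identity \eqref{eq:gradh_inner}:
\[
\langle \nabla h(Y),YE\rangle=\langle \beta Y^\top MY-\tfrac32\Psi(Y^\top G(Y)),E^2\rangle .
\]
For $Y\in\mathbb{B}(X,\rho_X)$ the eigenvalues of $Y^\top MY$ lie in $[\frac56,\frac76]$, and Proposition \ref{pr:ar} gives $\mathcal{A}(Y)\in\mathbb{B}(X,1)$, so $\|G(Y)\|_F\le H_{X,G}$. Together with $\|Y\|_F\le C_X$, the right-hand side is at least $(\frac56\beta-\frac32H_{X,G}C_X)\|E\|_F^2$. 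Since $\beta\ge\beta_X\ge 6H_{X,\nabla f}C_X$, this is $\ge c\beta\|E\|_F^2$ with an absolute $c>0$. (The constant in $\beta_X$ involves $H_{X,\nabla f}$ rather than $H_{X,G}$, but the same bound holds over $\mathbb{B}(X,1)$, since $G(Y)=\nabla f(\mathcal{A}(Y))$ and $\mathcal{A}(Y)\in\mathbb{B}(X,1)$.)

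To get the $\bar M$-weighted constant in the statement, I would rather measure $YE$ in the $\bar M$ geometry. Since $MY=\bar M\mathcal{P}_1(Y)$, we have $\|YE\|$ controlled through $\|\mathcal{P}_1(Y)E\|_{\bar M}^2=\tr(EY^\top MYE)\le\frac76\|E\|_F^2$. Cauchy--Schwarz in the form $\langle\nabla h,YE\rangle\le\|\bar M^{-1/2}\nabla h\|_F\,\|\bar M^{1/2}YE\|_F$ then yields
\[
\|\nabla h(Y)\|_F\ \ge\ c'\lambda_{\min}^{1/2}(\bar M)\beta\|E\|_F .
\]
The $\mathcal{P}_2$ component of $YE$ has to be handled separately, since it contributes nothing to $\tr(EY^\top MYE)$. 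I would either bound it crudely by $C_X\|E\|$ or test instead against $\mathcal{P}_1(Y)E$. The latter requires redoing the identity, and I would check which of the two versions gives the stated constants.

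\textbf{Step 2 (gradient part).} By the triangle inequality, $\|\nabla g(Y)\|_F\le\|\nabla h(Y)\|_F+\beta\|MYE\|_F$. Moreover $\|MYE\|_F=\|\bar M\mathcal{P}_1(Y)E\|_F\le\lambda_{\max}^{1/2}(\bar M)\,\|\mathcal{P}_1(Y)E\|_{\bar M}\le\lambda_{\max}^{1/2}(\bar M)\sqrt{7/6}\,\|E\|_F$. Plugging in Step 1 bounds $\beta\|MYE\|_F$ by a multiple $\kappa^{1/2}(\bar M)$ times $\|\nabla h(Y)\|_F$, so $\|\nabla g(Y)\|_F\lesssim\kappa^{1/2}(\bar M)\|\nabla h(Y)\|_F$.

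\textbf{Step 3 (combination).} Write $\|\nabla h\|=\frac12\|\nabla h\|+\frac12\|\nabla h\|$. Apply Step 2 to the first half, which produces the coefficient $\frac{1}{2\kappa^{1/2}(\bar M)}$ on $\|\nabla g(Y)\|_F$. Apply Step 1 to the second half, which produces the term $\frac{\lambda_{\min}^{1/2}(\bar M)\beta}{4}\|E\|_F$. Finally convert between $\|E\|_2$ and $\|E\|_F$ where needed.

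\textbf{Main obstacle.} I expect the main difficulty to be the constant bookkeeping in Step 1. Getting exactly $\lambda_{\min}^{1/2}(\bar M)/4$, with the stated $\beta_X$ (including its fourth entry, which involves $\kappa(\bar M)$ and $H_{X,\nabla g}$), requires choosing the right test direction and the right norm pairing. If the direct version loses a factor of $C_X$, I would switch to testing against $\bar M^{-1}MYE=\mathcal{P}_1(Y)E$. In that case the term $\nabla g$ paired with this direction is absorbed using $H_{X,\nabla g}$ and the fourth entry of $\beta_X$.
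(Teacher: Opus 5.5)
Your Step 1 is where the argument breaks, and it breaks exactly at the $\mathcal{P}_2$-component you flagged. When $M$ is singular, no absolute constant $c'$ gives $\|\nabla h(Y)\|_F\ge c'\lambda_{\min}^{1/2}(\bar M)\beta\|E\|_F$, and neither of your two fixes produces one.

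Testing against $YE$ is legitimate, because \eqref{eq:gradh_inner} holds for every $M$. But combined with $\|YE\|_F\le C_X\|E\|_F$ it only yields $\|\nabla h(Y)\|_F\ge\frac{7\beta}{12C_X}\|E\|_F$.

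Testing against $\mathcal{P}_1(Y)E=\bar M^{-1}MYE$ is worse, because the identity does not survive. One finds $\langle\nabla g(Y),\mathcal{P}_1(Y)E\rangle=-\langle\Psi(\mathcal{P}_2(Y)^\top G(Y)),E\rangle+O(\|E\|_F^2)$. This term is first order in $E$, so for small $E$ it cannot be absorbed into $\beta\tr(EY^\top MYE)\approx\beta\|E\|_F^2$, whatever $H_{X,\nabla g}$ and $\beta_X$ are.

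In fact the proposition itself is false for rank-deficient $M$. Take $n=2$, $p=1$, $M=\operatorname{Diag}(1,0)$, $f(Z)=e_2^\top Z$, and $X=(1,a)^\top$ with $a\ge 2$. Then $\bar M=I_2$, so the right-hand side is $\tfrac{1}{2}\|\nabla g(Y)\|_F+\tfrac{\beta}{4}\|E\|_F$. One computes $\nabla g(Y)=(-y_1y_2,\,\tfrac{3}{2}-\tfrac{1}{2}y_1^2)^\top$ and $\nabla h(Y)=\nabla g(Y)+\beta(y_1(y_1^2-1),0)^\top$. For $Y=(\sqrt{1+a/\beta},\,a)^\top$ this gives $\nabla h(Y)=(0,\,1-\tfrac{a}{2\beta})^\top$, so $\|\nabla h(Y)\|_F<1$. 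Meanwhile $\tfrac{1}{2}\|\nabla g(Y)\|_F+\tfrac{\beta}{4}\|E\|_F>\tfrac{a}{2}+\tfrac{a}{4}\ge\tfrac{3}{2}$.

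This $Y$ lies in $\mathbb{B}(X,\rho_X)$ for every $\beta\ge\beta_X$:
\begin{itemize}
\item $\|Y-X\|_F\le\frac{a}{2\beta}$;
\item $\beta_X\ge 4L_{X,\nabla g}C_X^2\ge 4aC_X^2$, since the Jacobian of $\nabla g$ at $X$ has $(1,1)$ entry $-a$;
\item $\rho_X\ge\frac{1}{15C_X}$.
\end{itemize}
Here $\beta\|E\|_F=a$, so any valid $c'$ must be $O(1/C_X)$.

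The paper's route is also different from yours, and sharper. It expands $\|\nabla h(Y)\|_F^2\ge\lambda_{\min}(\bar M)\tr(\nabla h(Y)^\top\bar M^{-1}\nabla h(Y))$ using $\nabla h=\nabla g+\beta MYE$. It bounds the cross term via \eqref{eq:gradh_inner} as $O(\beta C_XH_{X,G}\|E\|_F^2)$ and absorbs it into $\frac{5}{6}\lambda_{\min}(\bar M)\beta^2\|E\|_F^2$. It finishes with $\sqrt{A^2+B^2}\ge\frac{1}{2}(A+B)$. This near-orthogonality is what produces the exact coefficient $\frac{1}{2\kappa^{1/2}(\bar M)}$.

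Your triangle-inequality Steps 2--3 discard that near-orthogonality, so they miss the coefficient even when $M\succ 0$. In that case your Step 1 gives $\beta\|MYE\|_F\le 2\kappa^{1/2}(\bar M)\|\nabla h(Y)\|_F$, hence a coefficient of at most $\frac{1}{1+2\kappa^{1/2}(\bar M)}<\frac{1}{2\kappa^{1/2}(\bar M)}$.

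However, the paper's expansion has the same hole as yours. Since $\bar M^{-1}M$ is the orthogonal projector onto the range of $M$, the cross term is $2\beta\langle\nabla g(Y),\mathcal{P}_1(Y)E\rangle$, not $2\beta\langle\nabla g(Y),YE\rangle$. So the step marked (i), which uses $\Psi(\mathcal{P}_1(Y)^\top G(Y))$, is unjustified.

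The statement and the paper's proof are therefore correct for $M\succ 0$, where $\mathcal{P}_1=\mathrm{id}$ and $\bar M=M$; there you should switch to the squared expansion. For singular $M$, only a $C_X$-dependent version can hold, for example your crude $\frac{7\beta}{12C_X}\|E\|_F$ bound combined with your Step 2.
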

\begin{proof}
    From the expression of $\nabla h(Y)$, we have
 \[\begin{aligned}
        &\|\nabla h(Y)\|^2_F\\
        \ge{}&\lambda_{\min}(\bar{M})\tr(\nabla h(Y)^\top {\bar{M}}^{-1}\nabla h(Y))\\
    \ge{}&  \frac{1}{\kappa(\bar{M})}\left\|\nabla g(Y)\right\|^2_F+2\lambda_{\min}(\bar{M})\beta\left\langle\nabla g(Y),Y\left(Y^\top MY-I_p\right)\right\rangle\\
    &+\lambda_{\min}(\bar{M})\beta^2\tr\left(\left(Y^\top MY-I_p\right)Y^\top MY\left(Y^\top MY-I_p\right)\right)\\
    \overset{(i)}{\ge}{}& \frac{1}{\kappa(\bar{M})}\left\|\nabla g(Y)\right\|^2_F-3\lambda_{\min}(\bar{M})\beta\left\langle \Psi\left({\mathcal{P}_1(Y)}^\top G\left(Y\right)\right),\left({Y}^\top M Y-I_p\right)^2\right\rangle 
   +\frac{5\lambda_{\min}(\bar{M})\beta^2}{6}\left\|Y^\top MY-I_p\right\|^2_F\\
    \ge{}&  \frac{1}{\kappa(\bar{M})}\left\|\nabla g(Y)\right\|^2_F-3H_{X,G}\lambda_{\min}(\bar{M})C_X\beta\left\|Y^\top MY-I_p\right\|_F^2+\frac{5\lambda_{\min}(\bar{M})\beta^2}{6}\left\|Y^\top MY-I_p\right\|^2_F\\
    \overset{(ii)}{\ge}{}&   \frac{1}{\kappa(\bar{M})}\left\|
    \nabla g(Y)\right\|^2_F+\frac{\lambda_{\min}(\bar{M})\beta^2}{4}\left\|Y^\top MY-I_p\right\|^2_F.
    \end{aligned}
    \]
    Here, the inequality $(i)$ uses equation \eqref{eq:gradh_inner}, and $(ii)$ holds because $\beta\ge\beta_X\ge 6H_{X,G}C_X$. Therefore, 
\[\|\nabla h(Y)\|_F\ge \frac{1}{2\kappa^{\frac{1}{2}}(\bar{M})}\left\|
    \nabla g(Y)\right\|_F+\frac{\lambda_{\min}^{\frac{1}{2}}(\bar{M})\beta}{4}\left\|Y^\top MY-I_p\right\|_F.\]
  Therefore, the proof is complete.
\end{proof}

The following theorem states that the stationary violation of \ref{ocp} can be bounded by that of \ref{slep} in $\mathbb{B}(X,\rho_X)$ for any $X\in\Smnp$.
\begin{theorem}\label{th:star_bx}
    Under Assumption \ref{as:blanket}, for any feasible point $X\in\mathcal{S}_M(n,p)$ and any point $Y$ in its neighborhood $\mathbb{B}(X,\rho_X)$, suppose $\beta\ge\beta_{X}$. Then it holds that
    \[\|\nabla h(Y)\|_F\ge \frac{\lambda_{\min}(\bar{M})}{2\kappa^{\frac{1}{2}}(\bar{M})(1+\lambda_{\max}(\bar{M})C_X^2)}\|\grad  f(\mathcal{R}(X))\|_F+\frac{\lambda_{\min}^{\frac{1}{2}}
 (\bar{M})\beta}{8}\left\|Y^\top MY-I_p\right\|_F.\].
\end{theorem}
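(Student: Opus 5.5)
Presumably the statement intends $\grad f(\mathcal{R}(Y))$, since $\mathcal{R}(X)=X$ for feasible $X$. The plan is to start from Proposition \ref{pro:sta_slep}, which already gives
\[\|\nabla h(Y)\|_F\ge \frac{1}{2\kappa^{\frac{1}{2}}(\bar{M})}\|\nabla g(Y)\|_F+\frac{\lambda_{\min}^{\frac{1}{2}}(\bar{M})\beta}{4}\|Y^\top MY-I_p\|_F .\]
It then suffices to show
\[\frac{1}{2\kappa^{\frac{1}{2}}(\bar{M})}\|\nabla g(Y)\|_F+\frac{\lambda_{\min}^{\frac{1}{2}}(\bar{M})\beta}{8}\|Y^\top MY-I_p\|_F\ge \frac{\lambda_{\min}(\bar{M})}{2\kappa^{\frac{1}{2}}(\bar{M})(1+\lambda_{\max}(\bar{M})C_X^2)}\|\grad f(\mathcal{R}(Y))\|_F ,\]
using half of the penalty term to absorb error terms. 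This also explains the fourth entry of $\beta_X$.

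\textbf{Step 1.} Write $\nabla g(Y)=G(Y)(\tfrac32 I_p-\tfrac12Y^\top MY)-MY\Psi(Y^\top G(Y))$, obtained from \eqref{eq:grad_h} with $\beta=0$. Set $Z:=\mathcal{R}(Y)\in\Smnp$. The corresponding "feasible" quantity is
\[\nabla f(Z)-MZ\Psi(Z^\top\nabla f(Z)).\]
On $\Smnp$, the proof of Proposition \ref{pr:equa_fo_smnp} identifies this quantity with $\bar M(\grad f(Z)-\mathcal{P}_1(Z)\Psi(\mathcal{P}_2(Z)^\top\nabla f(Z)))$, a $\bar M$-image of a tangent-plus-normal decomposition. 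The $\bar M$-orthogonality of $\mathcal{T}_Z$ and $\mathcal{N}_Z$ then gives
\[\|\nabla f(Z)-MZ\Psi(Z^\top\nabla f(Z))\|_F\ge \lambda_{\min}(\bar M)\|\grad f(Z)\|_F,\]
up to a factor $\kappa^{1/2}(\bar M)$ from switching between $\|\cdot\|_F$ and $\|\cdot\|_{\bar M}$. I would check this carefully and let any constant loss be absorbed in the stated factor $(1+\lambda_{\max}C_X^2)^{-1}$.

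\textbf{Step 2.} Bound the difference between $\nabla g(Y)$ and this feasible quantity by a multiple of $\|Y^\top MY-I_p\|_F$. Propositions \ref{pr:dist} and \ref{pr:ar} give $\|\mathcal{A}(Y)-Z\|_F\le\frac34\|Y\|_F\|Y^\top MY-I_p\|_2^2$ and $\|Y-Z\|_F\le\frac32\|Y\|_F\|Y^\top MY-I_p\|_2$, with all points in $\mathbb{B}(X,1)$. Replacing $G(Y)$ by $\nabla f(Z)$ costs a Lipschitz constant of $\nabla f$ on $\mathbb{B}(X,1)$. Replacing $Y$ by $Z$ and $\tfrac32 I_p-\tfrac12 Y^\top MY$ by $I_p$ costs factors such as $\lambda_{\max}(\bar M)C_X^2$ and $H_{X,G}$. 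Altogether the difference is at most $K\|Y^\top MY-I_p\|_F$, with $K$ of order $(1+\lambda_{\max}C_X^2)H_{X,\nabla g}$-type quantities.

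\textbf{Step 3.} Combine the steps. Either divide by $(1+\lambda_{\max}C_X^2)$ before subtracting, or normalize as in the last entry of $\beta_X$. The condition $\beta\ge\beta_X$ ensures $\frac{\lambda_{\min}^{1/2}\beta}{8}\ge\frac{K}{2\kappa^{1/2}(1+\lambda_{\max}C_X^2)}$, so the subtracted error is absorbed. The main obstacle is Step 2: the last entry of $\beta_X$ involves $H_{X,\nabla g}$ rather than a Lipschitz constant of $\nabla f$. The error must therefore be organized so that only bounded quantities appear.

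To do this I would try to avoid Lipschitz continuity of $\nabla f$ entirely. One route is to relate $\grad f(Z)$ to the $\bar M$-projection of $\bar M^{-1}\nabla g(Y)$ onto $\mathcal{T}_Z$. Projecting is nonexpansive in $\|\cdot\|_{\bar M}$. The projection error from $Y\ne Z$ is bounded by $\|\nabla g(Y)\|_F\cdot\lambda_{\max}C_X^2\|Y^\top MY-I_p\|$. This gives
\[\|\grad f(Z)\|\lesssim(1+\lambda_{\max}C_X^2)\|\nabla g(Y)\|_F/\lambda_{\min}+H_{X,\nabla g}(\cdots)\|Y^\top MY-I_p\|_F,\]
which matches the displayed constants.
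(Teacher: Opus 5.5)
Your reduction through Proposition~\ref{pro:sta_slep} is the same as the paper's, and you are right that $\mathcal{R}(X)$ should read $\mathcal{R}(Y)$. Your Step~1 is correct and loses nothing. Write $\nabla g(Z)=\bar M(T+N)$ with $T=\grad f(Z)\in\mathcal{T}_Z$ and $N\in\mathcal{N}_Z$. Then $\|\bar M(T+N)\|_F^2\ge\lambda_{\min}(\bar M)\|T+N\|_{\bar M}^2\ge\lambda_{\min}(\bar M)\|T\|_{\bar M}^2\ge\lambda_{\min}^2(\bar M)\|T\|_F^2$. This is sharper than the paper's triangle-inequality bound $\lambda_{\min}(\bar M)\|\grad f(Z)\|_F\le(1+\lambda_{\max}C_X^2)\|\nabla g(Z)\|_F$.

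\textbf{The gap is in Step~2, specifically the route in your last paragraph.} That route treats the discrepancy as a projection effect. But $\grad f(Z)$ is the $\bar M$-projection onto $\mathcal{T}_Z$ of $\bar M^{-1}\nabla g(Z)$, not of $\bar M^{-1}\nabla g(Y)$. The error is therefore the projection of $\bar M^{-1}(\nabla g(Z)-\nabla g(Y))$. This contains $\nabla f(\mathcal{R}(Y))-\nabla f(\mathcal{A}(Y))$, a difference of $\nabla f$ at two distinct points. Nonexpansiveness of the projection cannot control it, and no bound of the form (sup-norm constants)$\times\|Y^\top MY-I_p\|$ exists.

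Here is a counterexample. Take $n=2$, $p=1$, $M=I_2$, $X=e_1$, $Y=(1+\delta)e_1$. Then $\mathcal{R}(Y)=e_1$ and $\mathcal{A}(Y)=(1-s)e_1$ with $s=\tfrac32\delta^2+\tfrac12\delta^3$. Let $f(x)=s\,\theta(x_2/s)\,\phi(x_1)$, where $\theta(t)=t$ near $0$, $\theta$ and $\theta'$ are bounded, $0\le\phi\le1$, $\phi(1)=1$, $\phi(1-s)=0$, and $|\phi'|\lesssim1/s$.
\begin{itemize}
\item $\nabla f$ is bounded independently of $\delta$, so all $H$-constants are $O(1)$.
\item $\nabla f(x_1,0)=(0,\phi(x_1))^\top$, hence $G(Y)=0$ and $\nabla g(Y)=0$.
\item Yet $\grad f(\mathcal{R}(Y))=e_2$, while $\|Y^\top MY-I_p\|_F=2\delta+\delta^2\to0$.
\end{itemize}
Your final displayed inequality fails here. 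In fact $\nabla h(Y)=\beta(1+\delta)(2\delta+\delta^2)e_1$, so the theorem itself requires $\beta\gtrsim1/\delta$ in this example. None of the $H$-type entries of $\beta_X$ supplies that.

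So a Lipschitz-type constant cannot be avoided, but $\beta_X$ already contains one: $4L_{X,\nabla g}C_X^2$. The paper's proof uses exactly the estimate
$\|\nabla g(\mathcal{R}(Y))-\nabla g(Y)\|_F\le L_{X,\nabla g}\|\mathcal{R}(Y)-Y\|_F\le\tfrac32L_{X,\nabla g}C_X\|Y^\top MY-I_p\|_F$.
Both points lie in $\mathbb{B}(X,1)$ by Proposition~\ref{pr:ar}. The paper writes $H_{X,\nabla g}$ at this step where the Lipschitz constant is meant. Your suspicion about the fourth entry of $\beta_X$ is justified, but the remedy is to use $L_{X,\nabla g}$, not to avoid it. Your first version of Step~2 would not close either, because it pays a Lipschitz constant of $\nabla f$, which does not occur in $\beta_X$.

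To finish, combine your sharp Step~1 with the $L_{X,\nabla g}$ estimate. Divide by $1+\lambda_{\max}(\bar M)C_X^2$ before subtracting. This leaves the error
$\frac{3L_{X,\nabla g}C_X}{4\kappa^{1/2}(\bar M)(1+\lambda_{\max}(\bar M)C_X^2)}\|Y^\top MY-I_p\|_F$.
It is absorbed by $\frac{\lambda_{\min}^{1/2}(\bar M)\beta}{8}\|Y^\top MY-I_p\|_F$ as soon as $\beta\ge6L_{X,\nabla g}C_X/\bigl(\lambda_{\max}^{1/2}(\bar M)(1+\lambda_{\max}(\bar M)C_X^2)\bigr)$. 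This already follows from $\beta\ge4L_{X,\nabla g}C_X^2$, because $\lambda_{\max}(\bar M)C_X^2\ge\lambda_{\max}(M)\|X\|_F^2\ge\tr(X^\top MX)=p\ge1$.
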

\begin{proof}
Because $\mathcal{R}(Y)\in\Smnp$, we compare the expressions of $\nabla g(\mathcal{R}(Y))$ and $\grad  f(\mathcal{R}(Y))$. Thus, we have
\[\begin{aligned}
    \nabla g(\mathcal{R}(Y))&=\bar{M}\grad  f(\mathcal{R}(Y))-M\mathcal{R}(Y)\Psi(\mathcal{P}_2(\mathcal{R}(Y))^\top\nabla f(\mathcal{R}(Y)))\\
    &=\bar{M}\grad  f(\mathcal{R}(Y))-M\mathcal{R}(Y)\Psi(\mathcal{P}_2(\mathcal{R}(Y))^\top \nabla g(\mathcal{R}(Y))).
\end{aligned}\]
Then we obtain the following estimation for $\|\grad  f(\mathcal{R}(Y))\|_F$:
\[\begin{aligned}
    &\lambda_{\min}(\bar(M))\|\grad f(X)\|_F\le \| \bar{M}\grad f(\mathcal{R}(Y))\|_F\\
    \le{}&\|\nabla g(\mathcal{R}(Y))-\nabla g(Y)+\nabla g(Y)\|_F+\|M\mathcal{R}(Y)\Psi(\mathcal{P}_2(\mathcal{R}(Y))(\nabla g(\mathcal{R}(Y)-\nabla g(Y)+\nabla g(Y)))\|_F\\
    \le{}&(1+\lambda_{\max}(M)\|\mathcal{R}(Y)\|_F^2)(\|\nabla g(Y)\|_F+\|\nabla g(\mathcal{R}(Y))-\nabla g(Y)\|_F)\\
    \overset{(i)}{\le}{}&(1+\lambda_{\max}(M)C_X^2)(\|\nabla g(Y)\|_F+H_{X,\nabla g}\|\mathcal{R}(Y)-Y\|_F)\\
     \overset{(ii)}{\le}{}&(1+\lambda_{\max}(M)C_X^2)(\|\nabla g(Y)\|_F+\frac{3H_{X,\nabla g}}{2}\|Y\|_F\|Y^\top MY-I_p\|_F)\\
\le{}&(1+\lambda_{\max}(M)C_X^2)(\|\nabla g(Y)\|_F+\frac{3H_{X,\nabla g}C_X}{2}\left\|Y^\top MY-I_p\right\|_F),
\end{aligned}\]
where the inequality $(i)$ follows from Proposition \ref{pr:ar} and the inequality $(ii)$ follows from Proposition \ref{pr:dist}. When $\beta\ge\beta_X\ge\frac{6\lambda_{\min}^{\frac{1}{2}}(\bar{M})H_{X,\nabla g}C_X}{\kappa^{\frac{1}{2}}(\bar{M})(1+\lambda_{\max}C_X^2)}$, it holds that 
\[\begin{aligned}
    &\|\nabla h(Y)\|_F\\
    \ge&\frac{\lambda_{\min}(\bar{M})}{2\kappa^{\frac{1}{2}}(\bar{M})(1+\lambda_{\max}(\bar{M})C_X^2)}\left(\|\grad  f(X)\|_F-\frac{3H_{X,\nabla g}C_X}{2}\left\|Y^\top MY-I_p\right\|_F\right)
    \\&+\frac{\lambda_{\min}^{\frac{1}{2}}(\bar{M})\beta}{4}\left\|Y^\top MY-I_p\right\|_F \\
    \ge&\frac{\lambda_{\min}(\bar{M})}{2\kappa^{\frac{1}{2}}(\bar{M})(1+\lambda_{\max}(\bar{M})C_X^2)}\|\grad  f(X)\|_F+\frac{\lambda_{\min}^{\frac{1}{2}}(\bar{M})\beta}{8}\left\|Y^\top MY-I_p\right\|_F.\\
\end{aligned}\]
This completes the proof. 
\end{proof}

\subsubsection{Equivalence on local minimizers}\label{relocal}
In this section, we analyze the relationship between the local minimizers of \ref{slep} and \ref{ocp}. 
The following proposition characterizes the changes in the function value of $h$ after performing a generalized orthogonalization of $Y\in\mathbb{B}(X,\frac{\rho}{2})$ for any $X\in\mathcal{S}_M(n,p)$.
\begin{proposition}\label{pr:varh}
    Under Assumption \ref{as:blanket}, for any feasible point $X\in\mathcal{S}_M(n,p)$ and any point $Y$ in the neighborhood $\mathbb{B}(X,\rho_X)$,
    suppose $\beta\ge \beta_X$, then  we have
  \[h(\mathcal{R}(Y))-h(Y)\le-\frac{\beta}{8}\|Y^\top MY-I_p\|_F^2.\]
\end{proposition}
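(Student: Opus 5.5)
Since $\mathcal{R}(Y)\in\Smnp$, the penalty term and the $\mathcal{A}$-correction both vanish there: $\mathcal{A}(\mathcal{R}(Y))=\mathcal{R}(Y)$, so $h(\mathcal{R}(Y))=f(\mathcal{R}(Y))$. We therefore split
\[
h(\mathcal{R}(Y))-h(Y)=\bigl[f(\mathcal{R}(Y))-f(\mathcal{A}(Y))\bigr]-\frac{\beta}{4}\|Y^\top MY-I_p\|_F^2 .
\]
The plan is to show that the first bracket is at most $\frac{\beta}{8}\|Y^\top MY-I_p\|_F^2$. The penalty term then leaves the claimed $-\frac{\beta}{8}\|Y^\top MY-I_p\|_F^2$.

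For the first bracket we use the mean value theorem along the segment between $\mathcal{A}(Y)$ and $\mathcal{R}(Y)$. By Proposition \ref{pr:ar}, both endpoints lie in $\mathbb{B}(X,1)$. Since the ball is convex, the whole segment lies in it, so $\|\nabla f\|_F\le H_{X,\nabla f}$ along the segment. This gives
\[
f(\mathcal{R}(Y))-f(\mathcal{A}(Y))\le H_{X,\nabla f}\,\|\mathcal{A}(Y)-\mathcal{R}(Y)\|_F .
\]
The third estimate of Proposition \ref{pr:dist}, the key second-order estimate, then bounds this by $\frac{3}{4}H_{X,\nabla f}\|Y\|_F\|Y^\top MY-I_p\|_2^2$.

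Next we bound the constants. Since $\rho_X\le\frac12$, we have $\|Y\|_F\le\|X\|_F+\frac12\le C_X$. We also use $\|\cdot\|_2\le\|\cdot\|_F$. Together these give
\[
f(\mathcal{R}(Y))-f(\mathcal{A}(Y))\le \frac{3}{4}H_{X,\nabla f}C_X\|Y^\top MY-I_p\|_F^2 .
\]
Since $\beta\ge\beta_X\ge 6H_{X,\nabla f}C_X$, we get $\frac34 H_{X,\nabla f}C_X\le \frac{\beta}{8}$, which is exactly what is needed.

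The only real obstacle is ensuring that the comparison involves $\mathcal{A}(Y)$ rather than $Y$. Comparing $\mathcal{R}(Y)$ directly with $Y$ would produce only a first-order term $O(\|Y^\top MY-I_p\|)$, which the quadratic penalty cannot absorb. The design of $\mathcal{A}$ in \ref{slep} is precisely what makes the gap between $\mathcal{A}(Y)$ and $\mathcal{R}(Y)$ quadratic, so the argument hinges on invoking that estimate from Proposition \ref{pr:dist}. We also use the fact that its proof is valid on $\mathbb{B}(X,\rho_X)$, where $Y^\top MY\succeq\frac56 I_p$ and hence $\mathcal{R}(Y)$ is well defined.
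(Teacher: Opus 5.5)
Your proof is correct and follows essentially the same route as the paper: split off the penalty term, bound $f(\mathcal{R}(Y))-f(\mathcal{A}(Y))$ by $H_{X,\nabla f}\|\mathcal{A}(Y)-\mathcal{R}(Y)\|_F$, apply the quadratic estimate of Proposition \ref{pr:dist} together with $\|Y\|_F\le C_X$, and absorb the result using $\beta\ge 6H_{X,\nabla f}C_X$. You also spell out several details the paper leaves implicit: convexity of $\mathbb{B}(X,1)$ for the mean-value step, the identity $\mathcal{A}(\mathcal{R}(Y))=\mathcal{R}(Y)$, and the inequality $\|\cdot\|_2\le\|\cdot\|_F$.
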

\begin{proof}
From the expression of $h$ in \ref{slep}, we obtain
    \[\begin{aligned}
        &h(Y)-h(\mathcal{R}(Y))= f\left(\mathcal{A}(Y)\right)-f(\mathcal{R}(Y))+\frac{\beta}{4}\left \|Y^\top MY-I_p\right\|_F^2\\
        \ge&-H_{X,\nabla f}\|\mathcal{A}(Y)-\mathcal{R}(Y)\|_F+\frac{\beta}{4}\left \|Y^\top MY-I_p\right\|_F^2\\
        \overset{(i)}{\ge}&\left(\frac{\beta}{4}-\frac{3}{4}H_{X,\nabla f}C_X\right)\left \|Y^\top MY-I_p\right\|_F^2\ge\frac{\beta}{8}\|Y^\top MY-I_p\|_F^2
    \end{aligned}\]
    where the inequality $(i)$ follows from Proposition \ref{pr:dist}.
    The proof is complete.                                         
\end{proof}
The following theorem establishes the equivalence between the local minimizers of \ref{ocp} and \ref{slep}.
\begin{theorem}\label{th:locmr}
 Under Assumptions \ref{as:blanket} and \ref{as:s}, for any feasible point $X\in\mathcal{S}_M(n,p)$, suppose the penalty parameter $\beta\ge\beta_X$. 
Then, \ref{slep} and \ref{ocp} share the same local minimizers in $\mathbb{B}(X,\frac{\rho_X}{2}),X\in\mathcal{S}_M(n,p)$.
\end{theorem}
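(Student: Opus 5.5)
The plan is to prove the two inclusions separately. The main tools are Proposition \ref{pr:varh}, which says that generalized orthogonalization strictly decreases $h$ off the manifold, and the fact that $h=f$ on $\Smnp$, since $\mathcal{A}(Y)=Y$ whenever $Y^\top MY=I_p$.

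\emph{SLEP $\Rightarrow$ OCP.} Let $X^*\in\mathbb{B}(X,\frac{\rho_X}{2})$ be a local minimizer of \ref{slep}.

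1. First show $X^*$ is feasible. If not, choose points $Y$ along a path from $X^*$ toward $\mathcal{R}(X^*)$ (or simply $Y=X^*$) and compare $h(\mathcal{R}(Y))$ with $h(X^*)$. A cleaner route avoids this entirely. A local minimizer is a first-order stationary point, and $X^*\in\mathbb{B}(X,\rho_X)$, so Corollary \ref{co:fo} (with $\beta>\beta_X$, or the boundary case handled by Theorem \ref{th:kappaxmx}'s argument) gives $X^*\in\Smnp$.
2. Take a neighborhood $U$ of $X^*$ on which $h\ge h(X^*)$. For any $Z\in\Smnp\cap U$ we get $f(Z)=h(Z)\ge h(X^*)=f(X^*)$.
3. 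Hence $X^*$ is a local minimizer of \ref{ocp}.

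\emph{OCP $\Rightarrow$ SLEP.} Let $X^*\in\mathbb{B}(X,\frac{\rho_X}{2})\cap\Smnp$ be a local minimizer of \ref{ocp}, so $f(Z)\ge f(X^*)$ for all $Z\in\Smnp\cap\mathbb{B}(X^*,\epsilon)$.

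1. Take $Y\in\mathbb{B}(X^*,\delta)$ with $\delta\le\frac{\rho_X}{2}$. Then $Y\in\mathbb{B}(X,\rho_X)$, which is exactly why the radius $\frac{\rho_X}{2}$ appears. Proposition \ref{pr:varh} applies and gives
\[h(Y)\ge h(\mathcal{R}(Y))+\frac{\beta}{8}\|Y^\top MY-I_p\|_F^2\ge f(\mathcal{R}(Y)).\]
2. Show $\mathcal{R}(Y)$ is close to $X^*$. By Proposition \ref{pr:dist} and the triangle inequality,
\[\|\mathcal{R}(Y)-X^*\|_F\le\delta+\tfrac32\|Y\|_F\|Y^\top MY-I_p\|_2.\]
Since $\|Y^\top MY-I_p\|_2=\|Y^\top MY-{X^*}^\top MX^*\|_2=O(\delta)$, the point $\mathcal{R}(Y)$ lies in $\mathbb{B}(X^*,\epsilon)$ once $\delta$ is small enough. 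Continuity of $\mathcal{R}$ near the manifold gives the same conclusion.
3. Combining, $h(Y)\ge f(\mathcal{R}(Y))\ge f(X^*)=h(X^*)$. So $X^*$ is a local minimizer of \ref{slep}.

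\emph{Main obstacle.} The delicate point is the feasibility step in the first direction. Corollary \ref{co:fo} is stated for $\beta>\beta_X$, while the theorem allows $\beta\ge\beta_X$. If strictness is needed, I would argue directly instead. Suppose $X^*$ is infeasible. Consider $Y_t=X^*+t(\mathcal{R}(X^*)-X^*)$, or apply Proposition \ref{pr:varh} at $X^*$ itself: it gives $h(\mathcal{R}(X^*))<h(X^*)$, but $\mathcal{R}(X^*)$ need not be close to $X^*$.

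A better direct argument is second order. Using $\nabla h(X^*)=0$ and equation \eqref{eq:gradh_inner}, the quantity
\[\Big\langle \beta {X^*}^\top MX^*-\tfrac32\Psi({X^*}^\top G(X^*)),\,({X^*}^\top MX^*-I_p)^2\Big\rangle\]
must vanish. Yet it is strictly positive, because $\lambda_{\min}({X^*}^\top MX^*)\ge\frac56$ in $\mathbb{B}(X,\rho_X)$ and $\beta\ge 6H_{X,G}C_X$. The resulting factor $\frac56\beta-\frac32H_{X,G}C_X$ is positive even with equality $\beta=\beta_X$. This contradiction forces feasibility. I would use this explicit computation rather than citing the corollary.
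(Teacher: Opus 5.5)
Your proof is correct. The direction ``local minimizers of \ref{ocp} are local minimizers of \ref{slep}'' is the paper's own argument. You orthogonalize a nearby $Y$ via $\mathcal{R}$, keep $\mathcal{R}(Y)$ inside the \ref{ocp}-neighborhood of $X^*$ by Proposition \ref{pr:dist}, and chain $h(Y)\ge h(\mathcal{R}(Y))=f(\mathcal{R}(Y))\ge f(X^*)=h(X^*)$ using Proposition \ref{pr:varh}. Your version, with balls centered at $X^*$ and the remark that $\delta\le\rho_X/2$ keeps $Y\in\mathbb{B}(X,\rho_X)$, is cleaner than the written proof, which conflates $X$ and $X^*$.

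You genuinely differ in the feasibility step of the other direction. The paper argues at second order: a local minimizer of \ref{slep} is second-order stationary, and Proposition \ref{pr:info} (infeasible stationary points in $\mathbb{B}(X,1)$ are strict saddles) rules out infeasibility. You argue at first order instead. You pair $\nabla h(X^*)=0$ with $X^*({X^*}^\top MX^*-I_p)$ through \eqref{eq:gradh_inner}, and you use the bound $\lambda_{\min}({X^*}^\top MX^*)\ge\frac56$ that the radius $\rho_X$ already guarantees. The paper's route aims at more: feasibility of minimizers in all of $\mathbb{B}(X,1)$, plus a saddle structure relevant to algorithms. That costs three things:
\begin{itemize}
\item Assumption \ref{as:s};
\item the estimate $\lambda_{\min}\le\frac16$ from Theorem \ref{th:kappaxmx}, which is stated only for $\beta>\beta_X$;
\item step (i) of Proposition \ref{pr:info}, which needs $C_X^2\lambda_{\min}({X^*}^\top MX^*)\ge\|X^*v\|_2^2$. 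This fails, e.g., when $MX^*v=0\neq X^*v$, which a singular $M$ allows.
\end{itemize}
Your route uses only what the theorem's neighborhood provides, needs no second-order information for this step, and covers $\beta=\beta_X$.

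Two small adjustments are needed.
\begin{itemize}
\item To bound $\|G(X^*)\|_F$, use Proposition \ref{pr:ar} to get $\|G(X^*)\|_F\le H_{X,\nabla f}$, together with $\beta_X\ge 6H_{X,\nabla f}C_X$. The definition of $\beta_X$ lists $6H_{X,G}$ without the factor $C_X$, so your inequality $\beta\ge 6H_{X,G}C_X$ is not literally available. With this fix your factor is at least $\frac{7}{12}\beta$.
\item That factor is positive only when $\beta>0$. If $f$ is constant, then $\beta_X=0$, and with $\beta=0$ the statement itself fails, since every point is then a local minimizer of $h$.
\end{itemize}
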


\begin{proof}
A local minimizer of \ref{slep} must be a second-order stationary point of \ref{slep}. Thus, by Proposition \ref{pr:info}, all minimizers of \ref{slep} in $\mathbb{B}(X,1)$ are feasible. Moreover, since $h(X) = f (X),\forall X\in\mathcal{S}_M(n,p)$, any local minimizer of \ref{slep} is a local minimizer of \ref{ocp}.

Conversely, if $X^*\in\mathbb{B}(X,\frac{\rho_X}{2})$ is a local minimizer of \ref{ocp}, then there exists a constant $\eta_X\le\frac{\rho_X}{2}$ such that $f(Z)\ge f(X^*)$ holds for every $Z\in\mathbb{B}(X,\eta_X)\cap\mathcal{S}_M(n,p)$. Let $0<\delta_X\le\frac{\eta_X}{2}$ be a constant such that $\|Z^\top MX-I_p\|_F\le\frac{\eta_X}{3C_X},\forall Z\in\mathbb{B}(X,\delta_X)$.
Then, for every $Z\in\mathbb{B}(X,\delta_X)$, Proposition \ref{pr:dist} yields $\|\mathcal{R}(Z)-X^*\|_F\le\|\mathcal{R}(Z)-Z\|_F+\|Z-X^*\|_F
    \le \frac{3}{2}\|Z\|_F\|Z^\top MZ-I_p\|_F+\frac{\eta_X}{2}
    <\eta_X$. Thus, $h(\mathcal{R}(Z))\ge h(X^*)$. By Proposition \ref{pr:varh},we then have
\[\begin{aligned}
    h(Z)-h(X^*)&=h(Z)-h(\mathcal{R}(Z))+h(\mathcal{R}(Z))-h(X^*)\ge\frac{\beta}{8}\|Z^\top MZ-I_p\|_F^2\ge0,
\end{aligned}\]
Thus, $X^*$ is a local minimizer of \ref{slep}. The proof is complete.
\end{proof}

\subsubsection{Equivalence on 
Łojasiewicz gradient inequality}\label{relogra}

In this subsection, we establish the Łojasiewicz gradient inequality of $h(X)$ based on the Riemannian Łojasiewicz gradient inequality for \ref{ocp}. The main results are summarized in the following theorem.
\begin{theorem}\label{th:lograr}
    Under Assumptions \ref{as:blanket} and \ref{as:s}, for any feasible point $X\in\mathcal{S}_M(n,p)$, suppose that the penalty parameter $\beta\ge\beta_X$, and that $f$ satisfies the Riemannian Łojasiewicz gradient inequality at $X\in\mathcal{S}_M(n,p)$ with Łojasiewicz exponent $\theta\in\left(0,1/2\right]$. Then $h$ also satisfies the Euclidean Łojasiewicz gradient inequality at $X$ with the same Łojasiewicz exponent $\theta$.
\end{theorem}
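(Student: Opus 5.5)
We will use the Riemannian Łojasiewicz inequality at $X$ in the form: there exist $\delta>0$ and $c>0$ such that
\[\|\grad f(Z)\|_F\ge c\,|f(Z)-f(X)|^{1-\theta}\qquad \text{for all } Z\in\Smnp\cap\mathbb{B}(X,\delta).\]
The goal is a constant $\tilde c>0$ and radius $\tilde\delta\le\min\{\rho_X,\delta\}$ with $\|\nabla h(Y)\|_F\ge \tilde c\,|h(Y)-h(X)|^{1-\theta}$ for all $Y\in\mathbb{B}(X,\tilde\delta)$. Since $h=f$ on $\Smnp$, we have $h(X)=f(X)$. The strategy is to pass from $Y$ to its generalized orthogonalization $\mathcal{R}(Y)$, apply the Riemannian inequality there, and then absorb all errors into the feasibility violation term of Theorem \ref{th:star_bx}.

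\textbf{Step 1 (localization).} Choose $\tilde\delta$ small enough that $\mathcal{R}(Y)\in\mathbb{B}(X,\delta)$ for every $Y\in\mathbb{B}(X,\tilde\delta)$. This is possible by Proposition \ref{pr:dist}, which gives $\|\mathcal{R}(Y)-X\|_F\le \|Y-X\|_F+\tfrac32 C_X\|Y^\top MY-I_p\|_2$, together with continuity of $Y\mapsto Y^\top MY-I_p$, which vanishes at $X$. Also shrink $\tilde\delta$ so that $|h(Y)-h(X)|\le 1$ on the ball.

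\textbf{Step 2 (splitting the function gap).} Write
\[|h(Y)-h(X)|\le |h(Y)-h(\mathcal{R}(Y))|+|f(\mathcal{R}(Y))-f(X)|.\]
For the first term we need a two-sided bound, not just Proposition \ref{pr:varh}. Using $\|\mathcal{A}(Y)-\mathcal{R}(Y)\|_F\le\tfrac34 C_X\|Y^\top MY-I_p\|^2$ from Proposition \ref{pr:dist} and the Lipschitz constant $H_{X,\nabla f}$ on $\mathbb{B}(X,1)$, which applies by Proposition \ref{pr:ar}, we get $|h(Y)-h(\mathcal{R}(Y))|\le K\|Y^\top MY-I_p\|_F^2$ with $K=\tfrac34 H_{X,\nabla f}C_X+\tfrac\beta4$. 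Since $1-\theta\in[\tfrac12,1)$, subadditivity of $t\mapsto t^{1-\theta}$ gives
\[|h(Y)-h(X)|^{1-\theta}\le K^{1-\theta}\|Y^\top MY-I_p\|_F^{2(1-\theta)}+|f(\mathcal{R}(Y))-f(X)|^{1-\theta}.\]
Because $2(1-\theta)\ge1$ and $\|Y^\top MY-I_p\|_F\le 1$ near $X$, the first piece is at most $K^{1-\theta}\|Y^\top MY-I_p\|_F$.

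\textbf{Step 3 (combining).} Apply the Riemannian inequality at $\mathcal{R}(Y)$ to bound the second piece by $c^{-1}\|\grad f(\mathcal{R}(Y))\|_F$. Then use Theorem \ref{th:star_bx}, read with $\grad f(\mathcal{R}(Y))$ as its proof actually yields. It says $\|\nabla h(Y)\|_F$ dominates both $a\|\grad f(\mathcal{R}(Y))\|_F$ and $b\|Y^\top MY-I_p\|_F$, where $a$ is the constant $\lambda_{\min}(\bar M)/\bigl(2\kappa^{1/2}(\bar M)(1+\lambda_{\max}(\bar M)C_X^2)\bigr)$ and $b=\lambda_{\min}^{1/2}(\bar M)\beta/8$. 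Hence
\[|h(Y)-h(X)|^{1-\theta}\le\Bigl(\frac{K^{1-\theta}}{b}+\frac{1}{ca}\Bigr)\|\nabla h(Y)\|_F,\]
which is the Euclidean Łojasiewicz inequality with the same exponent $\theta$.

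\textbf{Main obstacle.} The delicate step is Step 2. The error between $h$ and $f\circ\mathcal{R}$ must be quadratic in the constraint violation, so that after raising to the power $1-\theta\ge\tfrac12$ it is still controlled linearly by $\|Y^\top MY-I_p\|_F$. This is exactly where the restriction $\theta\le\tfrac12$ is used, and where the second-order accuracy $\|\mathcal{A}(Y)-\mathcal{R}(Y)\|=O(\|Y^\top MY-I_p\|^2)$ is essential. The penalty part, however, is quadratic trivially.

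A secondary care point is that Theorem \ref{th:star_bx} is stated with $\grad f(\mathcal{R}(X))$. I will re-derive it, or simply cite its proof, with $\mathcal{R}(Y)$ in place of $\mathcal{R}(X)$, which is what the argument there establishes.
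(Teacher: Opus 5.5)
Your proposal is correct and follows essentially the same route as the paper. You pass from $Y$ to $\mathcal{R}(Y)$, apply the Riemannian inequality there, invoke Theorem~\ref{th:star_bx} (rightly read with $\grad f(\mathcal{R}(Y))$), and bound $|h(Y)-h(\mathcal{R}(Y))|^{1-\theta}$ by a multiple of $\|Y^\top MY-I_p\|_F^{2-2\theta}$ using Proposition~\ref{pr:dist} and subadditivity of $t\mapsto t^{1-\theta}$. The only difference is the final absorption: you bound each piece by $\|\nabla h(Y)\|_F$ separately and add the constants, whereas the paper subtracts the error and absorbs it into the feasibility term by restricting to $\|Z^\top MZ-I_p\|_F^{1-2\theta}\le \lambda_{\min}^{1/2}(\bar{M})\beta^{\theta}/(16\bar{c})$. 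Your version is slightly cleaner: it needs no tuning at $\theta=1/2$, and it makes explicit that $\mathcal{R}(Y)$ stays in the neighborhood where the Riemannian inequality holds.
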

\begin{proof}
Since $f$ satisfies the Riemannian Łojasiewicz gradient inequality at $X\in\mathcal{S}_M(n,p)$ with Łojasiewicz exponent $\theta\in\left(0,1/2\right]$, there exist a constant $c>0$ and a neighborhood $\mathcal{U}_X$ of $X$ on $\mathcal{S}_M(n,p)$ such that 
  \begin{equation}\label{eq:loja_f}
    \left\|\grad  f (Y)\right\|_F \ge c|f(Y)-f(X)|^{1-\theta}, \forall Y\in\mathcal{U}_X.    
  \end{equation}
  Let $\bar{c}=c\cdot\frac{\lambda_{\min}(\bar{M})}{2\kappa^{\frac{1}{2}}(\bar{M})(1+\lambda_{\max}(\bar{M})C_X^2)}$. Then for
    every $Z\in\left\{Y\in\mathbb{B}(X,\rho_X):\left\|Y^\top MY-I_p\right\|_F^{1-2\theta}\le\frac{\lambda^\frac{1}{2}_{\min}(\bar{M})\beta^\theta}{16\bar{c}}\right\}$, we have
    \[\begin{aligned}
         &\|\nabla h(Z)\|_F
        \ge\frac{\lambda_{\min}(\bar{M})}{2\kappa^{\frac{1}{2}}(\bar{M})(1+\lambda_{\max}(\bar{M})C_X^2)}\|\grad  f(X)\|_F+\frac{\lambda_{\min}^{\frac{1}{2}}(\bar{M})\beta}{8}\left\|Z^\top MZ-I_p\right\|_F\\
        \overset{(i)}{\ge}&\bar{c}\left|f(\mathcal{R}(Z))-f(X)\right|^{1-\theta}+\frac{\lambda_{\min}^{\frac{1}{2}}(\bar{M})\beta}{8}\left\|Z^\top MZ-I_p\right\|_F\\
        \ge& \bar{c}\left|f(\mathcal{R}(Z))-f(X)\right|^{1-\theta}+\frac{\lambda_{\min}^{\frac{1}{2}}(\bar{M})\beta}{8}\left\|Z^\top MZ-I_p\right\|_F\\
         =& \bar{c}\left|h(\mathcal{R}(Z))-h(X)\right|^{1-\theta}+\frac{\lambda_{\min}^{\frac{1}{2}}(\bar{M})\beta}{8}\left\|Z^\top MZ-I_p\right\|_F\\
         \ge&\bar{c}\left(\left|h(Z)-h(X)\right|^{1-\theta}-\left|h(Z)-h(\mathcal{R}(Z))\right|^{1-\theta}\right)+\frac{\lambda_{\min}^{\frac{1}{2}}(\bar{M})\beta}{8}\left\|Z^\top MZ-I_p\right\|_F\\
          \overset{(ii)}{\ge}&\bar{c}\left(\left|h(Z)-h(X)\right|^{1-\theta}_F-2\beta^{1-\theta}\left\|Z^\top M Z-I_p\right\|_F^{2-2\theta}\right)+\frac{\lambda_{\min}^{\frac{1}{2}}(\bar{M})\beta}{8}\left\|Z^\top MZ-I_p\right\|_F\\
           \ge&\bar{c}\left|h(Z)-h(X)\right|^{1-\theta}_F+\beta^{1-\theta}\left\|Z^\top MZ-I_p\right\|_F\left(\frac{\lambda_{\min}^{\frac{1}{2}}(\bar{M})\beta^\theta}{8}-2\bar{c}\left\|Z^\top MZ-I_p\right\|_F^{1-2\theta}\right)\\
        \ge &\bar{c}\left|h(Z)-h(X)\right|^{1-\theta}_F. 
    \end{aligned}\]
    Here the inequality $(i)$ follows from the  Riemannian Łojasiewicz gradient inequality of the objective function $f$, and the inequality $(ii)$ follows from
    \[\begin{aligned}
        &\left|h(Z)-h(\mathcal{R}(Z))\right|^{1-\theta}\\
        =&\left|f(\mathcal{A})-f(\mathcal{R}(Z))+\frac{\beta}{4}\left\|Z^\top MZ-I_p\right\|_F^2\right|^{1-\theta}\\
        \le&\left|f(\mathcal{A}(Z))-f(\mathcal{R}(Z))\right|^{1-\theta}+\beta^{1-\theta}\left\|Z^\top M Z-I_p\right\|_F^{2-2\theta}\\
       \le &H_{X,\nabla f}^{1-\theta}\left\|\mathcal{A}(Z)-\mathcal{R}(Z)\right\|_F^{1-\theta}+\beta^{1-\theta}\left\|Z^\top M Z-I_p\right\|_F^{2-2\theta}\\       \le&\left(\left(H_{X,\nabla f}\|Z\|_F\right)^{1-\theta}+\beta^{1-\theta}\right)
       \left\|Z^\top M Z-I_p\right\|_F^{2-2\theta}\\
       \le& 2\beta^{1-\theta}\left\|Z^\top M Z-I_p\right\|_F^{2-2\theta}.
    \end{aligned}\]
    The proof is complete.
\end{proof}

\section{Applications}\label{algo}
The established equivalence between \ref{ocp} and \ref{slep} enables us to employ unconstrained optimization methods directly to solve \ref{ocp} through \ref{slep}. By applying efficient unconstrained optimization algorithms to \ref{slep}, we can derive efficient infeasible algorithms for \ref{ocp}. In this section, we introduce an infeasible framework, termed SLG, which applies gradient-based optimization methods to \ref{slep}.

The detailed framework is presented in Algorithm \ref{al:slg}. It is worth noting that since the iterates generated by SLG are not necessarily confined to $\Smnp$, the feasibility violation of the sequence $\{X_k\}$ is usually not very small, especially during the early stages of the algorithm. Therefore, when a solution with a low feasibility violation is required, a postprocessing procedure can be performed within the SLG framework, as described in Steps 6-12 of Algorithm \ref{al:slg}.

    \begin{algorithm}[H]\label{al:slg}
	\caption{Gradient-based method for \ref{slep} (SLG).}
	Choose the initial point $X_0$ and penalty parameter $\beta$;

 Construct the penalty function $h_\beta(X)$;

  Set $k=0$, $D_0=-\nabla h(X_0)$;
 
	\While{not terminated}{

   $X_{k+1}=X_k+\alpha_kD_k$;

   Update $D_k$ and set $k:=k+1$;
}

\eIf{high feasibility accuracy is required}{$\Tilde{X}:=\mathcal{R}(X_k)$;}{$\Tilde{X}:=X_k$;}
Return $\Tilde{X}$.
\end{algorithm}

%



    When we choose $D_k=-\nabla h_\beta(X_k)$ and fix $\alpha_k$ as a constant in Step 5 of Algorithm \ref{al:slg}, the SLG reduces to the gradient descent (GD) method for solving \ref{slep}. Furthermore, if $\alpha_k$ is chosen according to the Barzilai-Borwein (BB) step size rule \cite{barzilai_two_1988,dai_projected_2005}, as defined in \eqref{eq:abb}, then SLG corresponds to the Barzilai-Borwein gradient method for solving \ref{slep}. Additionally, by setting
$D_k=\mu_kD_{k-1}-\nabla h_\beta(X_k+\alpha_k\mu_kD_{k-1})$,
we obtain the Nesterov accelerated gradient (NAG) method \cite{nesterov_method_1983} for solving \ref{slep}. These methods also have Riemannian optimization counterparts \cite{absil_optimization_2008,iannazzo_r_2018,zhang_AnES_2018}.

In Table \ref{ta:comput}, we compare the per-iteration computational costs of these algorithms. Specifically, Table \ref{ta:comput} analyzes the computational complexity of key components commonly encountered in Riemannian gradient-based methods and in Algorithm \ref{al:slg}.
We also present a comparative assessment of the per-iteration computational costs for several gradient-based methods of SLEP and their corresponding Riemannian versions. For the Riemannian methods, we examine versions employing the inner product $\langle\cdot,\cdot\rangle_M$ with QR-based retraction —as implemented in the state-of-the-art solver ManOpt\cite{boumal_manopt_2014} —as well as versions using the canonical inner product $\langle\cdot,\cdot\rangle$ with polar-based retraction. It is important to note that the ManOpt version is applicable only when $M$ is of full rank. Terms in bold denote operations that cannot be parallelized.

As shown in Table \ref{ta:comput}, we can conclude that Riemannian optimization methods generally incur a higher computational cost per iteration compared to Algorithm \ref{al:slg}. One notable advantage of the proposed penalty method is its lower per-iteration computational cost compared to Riemannian optimization methods.  Moreover, Riemannian optimization methods involve $O(p^3)$ computations at each iteration, which are typically not parallelizable. These results further demonstrate that employing efficient unconstrained gradient-based approaches for \ref{slep} can lead to efficient optimization approaches for solving \ref{ocp}. 



 \begin{table}[htbp]
\tiny
	\centering 
	\setlength{\tabcolsep}{0.5mm}{\begin{tabular}{c|c|c|c|c|c|c}
		\hline\hline
		&\multicolumn{4}{c|}{\textbf{Oracles of Riemannian gradient-based methods}}&\multicolumn{2}{c}{\textbf{Oracles of Algorithm \ref{al:slg}}}\\
	\hline    \makecell{\textbf{Inner}\\\textbf{product}}&\makecell{$\langle\cdot,\cdot\rangle_M$}&$2n^2p+\mathcal{O}(np)$&\makecell{ $\langle\cdot,\cdot\rangle$}&$\mathcal{O}(np)$&\makecell{ $\langle\cdot,\cdot\rangle$}&$\mathcal{O}(np)$\\
       \hline
		\multirow{7}{*}{\makecell{\textbf{Riemannian}\\ \textbf{gradient}}}
		&$M^{-1}\nabla f(X)-X\Psi(X^\top \nabla f(X))$&\makecell{$1FO+\mathcal{O}(n^3 p)$\\$+4np^2$}&$\nabla f(X)-MXS$& \makecell{$1FO+2n^2p+$\\$5np^2+\mathcal{O}(p^3)$}&\multirow{7}{*}{\makecell{\textbf{Euclidean}\\\textbf{gradient}}} &\multirow{7}{*}{$1FO+2n ^2p+11np^2$}\\
    \cline{2-5}
    &$\nabla f(X)$&$1FO$&$\nabla f(X)$&1FO&&\\
		&$\nabla f(X)^\top X$&$2np^2$&MX &$2n^2p$&&\\
		&$X\Psi(\nabla f(X)^\top X)$&$2np^2$&$X^\top M^2 X$ &$np^2$ &&\\
		&$M^{-1}\nabla f(X)$&$\mathcal{O}(n^3p)$&$X^\top M\nabla f(X)$&$2np^2$& &\\
  &&&Equation for $S$&$\mathcal{O}(p^3)$& &\\
  &&&$MX S$&$2np^2$& &\\
		\hline
		\multirow{15}{*}{\makecell{ \textbf{Retraction}}}&\multicolumn{2}{c|}{\textbf{QR-based retraction}}& \multicolumn{2}{c|}{\textbf{Polar-based retraction}}&\multicolumn{2}{c}{\multirow{15}{*}{\textbf{No retraction}}}\\
  \cline{2-5}
  &\makecell{ $M^{-\frac{1}{2}}\operatorname{qf}(M^{\frac{1}{2}}(X+T))$}&\makecell{$2n^2p+\frac{4}{3}np^2$\\$+\bm{\mathcal{O}(p^3)}$}& \makecell{$(X+T)(I_p+T^\top MT)$}& \makecell{$2n^2p+5np^2$\\$+\bm{\mathcal{O}(p^3)}$}&\multicolumn{2}{c}{}\\
  \cline{2-5}
  	&$MT$&$2n^2p$ & 
   $MT$&$2n^2p$&\multicolumn{2}{c}{ }\\
		&$T^\top MT$&$np^2$ & $T^\top MT$&$np^2$&\multicolumn{2}{c}{}\\
		&\makecell{
		 $I_p+T^\top M T=L^\top L$}&$\bm{\mathcal{O}(p^3)}$& \makecell{$I_p+T^\top MT=VDV^\top$}&$\bm{\mathcal{O}(p^3)}$ &\multicolumn{2}{c}{ }\\
	  &$(X+T)L^{-1}$& $\frac{1}{3}np^2$&$(X+T)VD^{-\frac{1}{2}}V^\top$&$4np^2$&\multicolumn{2}{c}{ }\\
   \cline{2-5}
   & \multicolumn{2}{c|}{\textbf{Inverse of the QR-based retraction}}&\multicolumn{2}{c|}{\textbf{Inverse of the polar-based retraction}}&\multicolumn{2}{c}{ }\\
    \cline{2-5}
   & \makecell{$YR-X$} &\makecell{$2n^2p +4np^2$\\$+\mathcal{O}(p^4)$}& \makecell{$YZ-X$} &\makecell{$2n^2p +4np^2$\\$+\mathcal{O}(p^3)$}&\multicolumn{2}{c}{ }\\
    \cline{2-5}
  	&$MX$&$2n^2p$ & 
   $MX$&$2n^2p$&\multicolumn{2}{c}{ }\\
		&$Y^\top MX$&$2np^2$ & $Y^\top MX$&$2np^2$&\multicolumn{2}{c}{}\\
  & Equation for $R$&$\mathcal{O}(p^4)$ &Equation for $Z$&$\mathcal{O}(p^3)$&\multicolumn{2}{c}{}\\
  & $YR$&$2np^2$ & $YZ$&$2np^2$&\multicolumn{2}{c}{}\\
	  \hline\makecell{
	  \textbf{Vector}\\ \textbf{transport}} &$T-X\Psi(T^\top M X)$ &\makecell{
	  	$2n^2p$\\$+4np^2$}
& $T-MXS(T)$ &\makecell{
	  	$2n^2p+5np^2+$\\
	  	$\mathcal{O}(p^3)$
}& \multicolumn{2}{c}{\textbf{No vector transport}}\\
   \hline
   \multicolumn{7}{c}{\textbf{In total}}\\
   \hline
 \textbf{GD}&\multicolumn{2}{c|}{$1FO+\mathcal{O}(n^3p)+2n^2p+\frac{16}{3}np^2+\bm{\mathcal{O}(p^3)}$}& \multicolumn{2}{c|}{$1FO+4n^2p+10np^2+\bm{\mathcal{O}(p^3)}$}&\multicolumn{2}{c}{\multirow{3}{*}{$1FO+2n^2p+11np^2$}}\\
  \cline{1-5}
\textbf{BB}&\multicolumn{2}{c|}{$1FO+\mathcal{O}(n^3p)+8n^2p+\frac{28}{3}np^2+\bm{\mathcal{O}(p^3)}$}& \multicolumn{2}{c|}{$1FO+6n^2p+15np^2+\bm{\mathcal{O}(p^3)}$}&\multicolumn{2}{c}{}\\
  \cline{1-5}
\textbf{NAG}&\multicolumn{2}{c|}{$1FO+\mathcal{O}(n^3p)+10n^2p+16np^2+\mathcal{O}(p^4)+\bm{\mathcal{O}(p^3)}$}& \multicolumn{2}{c|}{$1FO+12n^2p+28np^2+\bm{\mathcal{O}(p^3)}$}&\multicolumn{2}{c}{}\\
	  \hline\hline
	\end{tabular}}
\caption{Comparison of the computational complexity of the first-order oracles between Algorithm \ref{al:slg} and its Riemannian counterparts.}
\label{ta:comput}
\end{table}








\section{Numerical experiments}\label{numerical}
In this section, we compare the numerical performance of Algorithm \ref{al:slg} with its corresponding Riemannian gradient-based counterparts on two classes of test problems: the quadratic minimization problem over the generalized Stiefel manifold (Test Problem 1) and the smoothing approximation problem of SGCCA (Test Problem 2). 
 All the numerical experiments in this section were conducted using MATLAB R2018a on an Ubuntu 18.10 operating system, running on a workstation equipped with an Intel(R) Xeon(R) Silver 4110 CPU at 2.10GHz	and 384.0 GB of RAM.
\subsection{Test problem}
In this subsection, we introduce test problems. The first is a quadratic minimization problem defined on the generalized Stiefel manifold:
\begin{problem}\label{t1}
    \[
		\begin{aligned}
 			\min_{X\in\mathbb{R}^{n\times p}} &f(X)=\frac{1}{2}\tr(X^\top AX)+\alpha\tr(G^\top X)\\
			\st\;\;\;&X^\top M X=I_p.
		\end{aligned}
\]
where the matrix $A\in\mathbb{R}^{n\times n}$ and the matrix $G\in\mathbb{R}^{n\times p}$.

\end{problem}
In Problem \ref{t2}, the matrix $A$ is defined as $A:=U^\top D U$,
where $U:=\operatorname{qf}(\operatorname{rand}(n,n))\in\mathbb{R}^{n\times n}$ and $D\in\mathbb{R}^{n\times n}$ is a diagonal matrix with elements $D(i,i):=\theta^{1-i}\;\;\;\text{for all }i=1,2,\dots,n$.
Here, $\theta\ge1$ is a parameter controlling the decay rate of the eigenvalues of $A$.
The matrix $G\in\mathbb{R}^{n\times p}$ is generated as $G:=QE$, where $\tilde{Q}=\operatorname{rand}(n,p)$, $Q\in\mathbb{R}^{n\times p}$ with normalized columns defined by $Q(:,i)=\tilde{Q}(:,i)/||\tilde{Q}(:,i)||_2(i=1,2,\dots,p)$, and 
$E\in\mathbb{R}^{p\times p}$ is a diagonal matrix with entries $E(i,i):=\eta^{i-1}\;\;\;\text{for all }i=1,2,\dots,p$.
The scalar $\alpha>0$ represents the scale between the quadratic and linear terms, while $\eta\ge1$ is a parameter that determines the growth rate of the column norms of $M$.
The matrix $M$ is an $n\times n$ sparse, positive semi-definite symmetric matrix generated by $M=\operatorname{sprandsym}(n,d,r),$
where $\operatorname{sprandsym}(n ,d,r)$ produces an $n\times n$ sparse matrix $M$ with approximately $d\cdot n^2$ nonzero elements and eigenvalues given by $r\in\mathbb{R}^n$. The elements of $r$ are uniformly sampled from $[0,1]$, i.e.,
$r(i)=\operatorname{rand}(1), 0\le i\le n$. Unless otherwise stated, the default values of the parameters in this problem are $\theta=1.01$, $\eta=1.01$, and $d=0.01$.
The initial point $X_0$ is randomly chosen on the manifold $\mathcal{S}_M(n,p)$ as $X_0=\mathcal{R}(\operatorname{rand}(n,p))$. 
We designed five groups of test problems, where only one parameter is varied at a time while keeping all others fixed. We describe the varying parameters of each group specifically as follows:
\begin{itemize}
    \item Number of rows of the variable: $n = 250j$ for $j = 1, 2, 3, 4, 5, 6, 7, 8$.
    \item Number of columns of the variable: $p = 20j$ for $j = 1, 2, 3, 4, 5, 6$.
    \item Decay of the eigenvalues of $A$: $\theta = 1.00 + 0.01j$ for $j = 1, 2, 3, 4, 5, 6$.
    \item Difference between column norms of $G$: $\eta = 1.00 + 0.01j$ for $j = 1, 2, 3, 4, 5, 6$.
    \item Dominance of the linear term: $\alpha = 10^{j}$ for $j= -2, -1, 0, 1, 2$.
\end{itemize} 

The second test problem is the smoothing problem of the $l_{2,1}$-norm SGCCA problem with $k=2$:
\begin{problem}\label{t2}
  \begin{equation}\label{eq:ssgca}
     \begin{aligned}	&&\min_{X\in\mathbb{R}^{n\times p}}\;\;\;&-\frac{1}{2}\tr \left(X^\top\mathbb{E}\left[\begin{pmatrix}
	\zeta_1\zeta_1^\top&\zeta_1\zeta_2^\top\\ 
\zeta_2\zeta_1^\top&\zeta_2\zeta_2^\top\\
   \end{pmatrix}\right] X\right)+\gamma \sum_{i=1}^{p} s_{\mu}(||X_{i\cdot}||_2),\\
	   &&\st\;\;\;& X^\top \mathbb{E}\left[\begin{pmatrix}
	\zeta_1\zeta_1^\top&0\\ 
0&\zeta_2\zeta_2^\top\\
   \end{pmatrix}\right]X=I_p,
	   \end{aligned}
\end{equation}
    where $s_\mu(t)$ is defined as in \eqref{smoothfun}, $\zeta_1\in\mathbb{R}^{n_1}$ and $\zeta_2\in\mathbb{R}^{n_2}$ are random vectors.
\end{problem}
In Problem \ref{t2}, we set $\mu=0.001$ and $\gamma=0.05$ by default. We tested the sample average approximation (SAA, \cite{arora_cca_2017}) problem using synthetic data. 
The exact covariance matrices are generated as follows \cite{chen_sparse_2013,gao_sparse_2017}:
 \[\mathbb{E}[\zeta_i\zeta_i^\top]:=\operatorname{sprandsym}(n_i,d_i,r_i), \;i=1,2,\]
 \[\mathbb{E}[\zeta_1\zeta_2^\top]:=\mathbb{E}[\zeta_1\zeta_1^\top]U\Lambda V^\top \mathbb{E}[\zeta_2\zeta_2^\top].\]
 where $\Lambda\in\mathbb{R}^{p\times p}$ is a diagonal matrix, and $U\in\mathbb{R}^{n_1\times p}, V\in\mathbb{R}^{n_2\times p}$ satisfy $\|U\|_{2,0}=s_1\ge p$, $\|V\|_{2,0}=s_2\ge p$, and 
$U^\top Cov(\mathcal{Z}_1,\mathcal{Z}_1) U=V^\top Cov(\mathcal{Z}_2,\mathcal{Z}_2) V=I_p$.
 By default, we set $d_1=d_2=0.01$. The elements $r_i,\;i=1,2$, and the diagonal elements of $\Lambda$ are generated uniformly on $[0,1]$. 
Suppose $Z_1\in\mathbb{R}^{m\times n_1}$ and $Z_2\in\mathbb{R}^{m\times n_2}$ are the centralized data matrices of $\zeta_1$ and $\zeta_2$ from the distribution $\mathcal{N}\left(\begin{pmatrix}
       0\\
       0
   \end{pmatrix},\mathbb{E}\left[\begin{pmatrix}
	\zeta_1\zeta_1^\top&\zeta_1\zeta_2^\top\\ 
\zeta_2\zeta_1^\top&\zeta_2\zeta_2^\top\\
   \end{pmatrix}\right]\right)$, 
where $m$ denotes the sample size. We set $m=10(n_1+n_2)$.  In the SAA formulation, the covariance matrices are approximated as 
   \[\mathbb{E}[\zeta_i\zeta_j^\top]\approx T_{\sigma}(\frac{1}{m}Z_iZ_j^\top),\]
where the thresholding operator $T_\sigma$ improves the approximation of sparse real covariance matrices:
 \[T_\sigma(M)_{ij}=\left\{\begin{aligned}
	   &M_{ij},\;\;\;&\text{if } |M_{ij}|>\sigma,\\ 
	   &0,\;\;\;&\text{if } |M_{ij}|\le\sigma,
   \end{aligned}\right.\]
where $\sigma=\sqrt{\frac{\operatorname{log}n}{m}}$. 
We considered four groups of test problems, each varying one parameter while keeping the others fixed. By default, we set $\mu=0.001$ and $\gamma=0.05$.
\begin{itemize}
    \item Dimension of random variables: $n_1=n_2=250j$, $j=1,2,3,4,5,6,7,8$.
\item Number of pairs of canonical variables: $p=20+5j$, $j=0,1,2,3,4,5$.
\item Sparsity parameter: $\gamma=0.02j$, $j=1,2,3,4,5$.
\item Smoothing parameter: $\mu=0.0005j$, $j=1,2,3,4,5$.
\end{itemize}
\subsection{Implementation details}


We compared the performance of Algorithm \ref{al:slg} with the BB gradient setting to that of the Riemannian BB gradient method. Specifically, in Algorithm \ref{al:slg}, we set $D_k=-\nabla h_\beta(X)$ and compute $\alpha_k$ using the alternative BB (ABB) strategy:
\begin{equation}\label{eq:abb}
   \alpha^{ABB}_{k}=\left\{\begin{aligned}
       &&\frac{\left|\left\langle S_{k},Y_{k} \right\rangle\right| }{\left\langle Y_{k},Y_{k} \right\rangle }\;\;\;&\text{if k is even,}\\
        &&\frac{\left\langle S_{k},S_{k} \right\rangle }{\left|\left\langle S_{k},Y_{k} \right\rangle \right|}\;\;\;&\text{if k is odd.}
   \end{aligned}\right.
\end{equation}
where $S_k=X_{k}-X_{k-1}$, $Y_k=\nabla h(X_{k})-\nabla h(X_{k-1})$, and $\alpha_0=0.001$. For brevity, we refer to this algorithm as SLBB. Following the discussions in \cite{gao_parallelizable_2019,xiao_solving_2021}, the penalty parameter is selected as $\beta= 0.1 \|\nabla f(X_0)\|$ in our numerical experiments. We then terminate the algorithm whenever 
\begin{equation}\label{eq:stop}
    \norm{\nabla h(X)}\le \varepsilon
\end{equation}
or when the number of iterations reaches $N$. 

For fairness, we compare the numerical performance of SLBB with that of the Riemannian gradient method using BB stepsizes \cite{boumal_manopt_2014}. The Manopt package employs a canonical metric \cite{shustin_preconditioned_2021} for optimization problems over the generalized Stiefel manifold. Therefore, we first tested the Riemannian BB method with this canonical metric (ManBB) for comparison. Moreover, we evaluated the numerical performance of the Riemannian BB method under the Euclidean metric (RBB). Both ManBB and RBB were terminated when the maximum number of iterations exceeded $N$ or when the norm of its Riemannian gradient is smaller than $\epsilon$, that is, $\|\grad f (X_k)\|_F\le \epsilon$. 

Here, we set $\epsilon=10^{-4}$, $N=2000, 10000$ for Problems \ref{t1} and \ref{t2}, respectively.
Moreover, all compared algorithms were initialized at the same randomly generated initial point $X_0 \in \Smnp$. 
\subsection{Numerical results}

The numerical results for Problem \ref{t1} are listed in Table 3.
From the first five groups of results, we make the following observations:  In the vast majority of cases, all solvers converged to the same function value when initialized from identical initial points. Moreover, they exhibited comparable KKT violations, typically on the order of $10^{-5}$. Notably, SLBB consistently outperformed the other solvers in terms of CPU time, owing to its lower per-iteration computational cost.

To provide a more detailed perspective, we measured the time consumed by each component of the algorithms (over 200 iterations), as shown in Figure \ref{timepart}. The calculation of the Riemannian gradient in Riemannian algorithms requires significantly more time than that of the Euclidean gradient in our algorithm. In addition, the computation of retraction and transport mapping occupies a significant portion of the total runtime. Furthermore, ManBB requires a substantial amount of time for the inner product calculations.

The last result group shows that as the rank-deficient degree of $M$ increases, SLBB remains nearly unaffected in terms of accuracy, although its iteration count increases. We also examined the variation of stationary violations over iterations, as shown in Figure \ref{rank}. It is evident that RBB tends to diverge as the degree of rank deficiency increases.

\begin{table}[htbp]
\footnotesize
			\centering
			 \setlength{\tabcolsep}{1mm}{\begin{tabular}{|c|c|c|c|c|c|c|c|c|c|c|c|c|}
				\hline
	\makecell{Test \\problems}&\multicolumn{3}{c|}{Function values}&\multicolumn{3}{c|}{Stationarity}&\multicolumn{3}{c|}{Iterations}&\multicolumn{3}{c|}{CPU time(s)}\\
			\hline
&ManBB&RBB&SLBB&ManBB&RBB&SLBB&ManBB&RBB&SLBB&ManBB&RBB&SLBB\\
\hline
$n$&&&&&&&&&&&&\\
\hline
250& -22.94& -22.94&  -22.94&  9.23e-05&  7.65e-05&  7.63e-05& 748.80&  125.40&  187.50& 3.74&  0.61&  \textbf{0.24}  \\ \hline
500&-34.07&  -34.07&  -34.07&  8.78e-05&  7.98e-05&  6.88e-05& 213.20&  114.90&  129.50& 1.56&  0.70&  \textbf{0.24}  \\ \hline
750&-38.92 & -38.92 & -38.92 & 7.57e-05 & 7.45e-05 & 7.83e-05 & 127.60 & 148.10 & 148.90 & 1.31 & 1.13 & \textbf{0.37}  \\ \hline
1000&-42.24&   -42.24&   -42.24&   8.18e-05&   8.22e-05&   8.56e-05&  240.70&   195.60 &  186.70&  3.12&   1.59&   \textbf{0.47} \\ \hline
1250&-44.59&   -44.59 &  -44.59&   8.44e-05 &  8.80e-05 &  7.65e-05&  271.50&   247.00 &  202.50&  4.48&   2.34&   \textbf{0.63}  \\ \hline
1500&-44.36&   -44.36&   -44.36&   8.60e-05&   7.71e-05&   7.47e-05&  202.40&   250.40  & 216.80&  5.25  & 2.74 &  \textbf{0.82}  \\ \hline
1750&-49.19&   -49.19&   -49.19&   8.41e-05&   8.68e-05&   8.29e-05 & 173.30&   314.30&  249.20 & 5.38 & 4.04&   \textbf{1.18}  \\ \hline
2000&-48.02&   -48.02  & -48.02 &  7.16e-05&   8.49e-05  & 7.25e-05 & 169.10 &  318.70  & 251.50 & 6.60&   4.81&   \textbf{1.45}  \\ \hline
$p$&&&&&&&&&&&&\\
\hline
20&-21.81&  -21.81&  -21.81&  7.36e-05 & 7.25e-05&  7.16e-05& 104.10&  144.90 & 133.30 &0.86  &0.57  &\textbf{0.23}  \\ \hline
40&-36.77 & -36.77 & -36.77 & 6.11e-05 & 8.35e-05 & 7.94e-05 & 245.80 & 195.30 & 165.40 & 2.89 & 1.37 & \textbf{0.37}  \\ \hline
60&-48.36 & -48.36 & -48.36 & 8.45e-05 &  7.89e-05 & 7.92e-05 &229.30&  237.90 & 200.60& 3.25  &2.33 & \textbf{0.56}  \\ \hline
80&-56.51 & -56.51 & -56.51&  8.37e-05 & 7.93e-05&  7.01e-05& 223.20&  236.90 & 215.20 &3.74  &2.99 & \textbf{0.75}  \\ \hline
100&-61.80  &-61.80  &-61.80  &9.16e-05&  8.23e-05 & 7.43e-05& 240.20&  236.90 & 226.50& 4.70&  4.18 & \textbf{1.00}  \\ \hline
120&-67.00 & -67.00&  -67.00 & 8.08e-05&  8.54e-05 & 6.93e-05& 286.80&  245.70 & 246.50 &6.36 & 5.58&  \textbf{1.27}  \\ \hline
$\theta$&&&&&&&&&&&&\\
\hline
1.01&-43.59&  -43.59 &  -43.59 &  7.95e-05 &  7.09e-05  & 6.46e-05 & 395.60 &  216.90 &  194.40&  5.19 &  1.78  & \textbf{0.48}  \\ \hline
1.04&-50.39 &  -50.39 &  -50.39 &  8.37e-05 &  7.89e-05 &  8.72e-05 &  87.90 &  404.60  & 338.50&  1.14 &  3.28&   \textbf{0.84}  \\ \hline
1.07&-55.65&   -55.65 &  -55.65&   6.78e-05&   8.75e-05&   8.55e-05&  120.70&   599.80&   450.80 & 1.57  & 4.86&   \textbf{1.11}  \\ \hline
1.10&-55.27 &  -55.27&   -55.27&   5.10e-05&   9.35e-05&   8.93e-05&  128.30 &  643.00&   456.20&  1.66 &  5.20 &  \textbf{1.13}  \\ \hline
1.13&-54.34 &  -54.34 &  -54.34 &  8.20e-05 &  8.84e-05  & 9.30e-05&   61.70 &  546.40 &  454.00&  0.80 &  4.42 &  \textbf{1.13}  \\ \hline
1.16&-57.58  & -57.58&   -57.58 &  6.59e-05&   8.73e-05&   9.17e-05&   67.60 &  613.60 &  506.30 & 0.88&   4.96 &  \textbf{1.26}  \\ \hline
$\eta$&&&&&&&&&&&&\\
\hline
1.01&-43.71 & -43.71 & -43.71 & 8.16e-05 & 8.18e-05 & 7.46e-05 & 158.50 & 196.40 & 183.70 & 2.07 & 1.61 & \textbf{0.48}  \\ \hline
1.04&-24.06&  -24.06 & -24.06 & 7.55e-05&  8.07e-05&  6.83e-05& 444.70&  229.00&  207.50& 5.94& 1.89&  \textbf{0.53}  \\ \hline
1.07&-15.91&  -15.91&  -15.91&  8.30e-05&  8.92e-05&  9.31e-05& 776.50&  278.80&  322.50& 10.38&  2.31&  \textbf{0.83}  \\ \hline
1.10&-11.82&  -11.82&  -11.82&  9.44e-05&  9.62e-05&  9.53e-05& 1558.30&  463.80&  535.00 &20.80&  3.84&  \textbf{1.39}  \\ \hline
1.13&-9.83&  -9.83&  -9.83&  9.61e-05&  9.86e-05&  9.89e-05& 3365.00&  931.70  &1095.00 &43.77&  7.55&  \textbf{2.82}  \\ \hline
1.16&-8.32&  -8.32&  -8.32&  1.63e-03&  9.96e-05&  9.99e-05& 9981.30&  1412.60&  1671.50& 128.20&  11.43&  \textbf{4.26}  \\ \hline
$\alpha$&&&&&&&&&&&&\\
\hline
0.01&-0.23 & -0.23 & -0.23 & 3.31e-02 & 9.56e-05 & 9.69e-05 & 2000.00 & 308.60 & 327.10 & 33.94 & 2.84 & \textbf{0.91}\\
 \hline
0.1&-3.22 & -3.22 & -3.22 & 8.21e-05 & 7.98e-05 & 7.98e-05 & 680.30 & 123.90 & 123.50 & 11.62 & 1.11 & \textbf{0.34}\\
 \hline 
1&-41.98 & -41.98 & -46.18 & 8.56e-05 & 8.07e-05 & 6.69e-05 & 162.70 & 196.70 & 199.30 & 2.76 & 1.76 & \textbf{0.53}\\
 \hline 
10&-579.54 & -579.54 & \textbf{-1995.41} & 7.64e-05 & 7.95e-05 & 7.98e-05 & 174.40 & 793.50 & 775.80 & 2.99 & 7.08 &\textbf{2.05} \\
 \hline
100&-3858.99 & -7275.48 & \textbf{-7276.40 }& 1.57e+03 & 7.20e-01 & \textbf{5.93e-04} & 2000.00 & 2000.00 & 1965.80 & 33.80 & 17.89 & \textbf{5.35}
\\
 \hline
 $c$&&&&&&&&&&&&\\
\hline
1&-&-41.83& -41.83 &-& 8.64e-05 & 7.24e-05 &-& 211& 175 &-& 1.98 & \textbf{0.52}  \\
\hline
0.98&-&-48.50& -48.50&-& 9.57e-05 & 9.73e-05 &-& 275& 202&-& 2.55& \textbf{0.51}
 \\
\hline
0.96&-&-51.19& -51.19 &-& 8.66e-05 & 6.80e-05&-& 511& 322&-& 4.77& \textbf{0.89}
\\
\hline
0.94&-&-54.73& -54.73 &-& 6.70e-05 & 9.52e-05 &-& 1813& \textbf{263}&-& 16.44 & \textbf{0.80}\\
\hline
0.92&-&-59.59& -68.23&-& 1.30e+00 & \textbf{6.43e-05} &-& 2000& \textbf{420} &-& 17.84 & \textbf{1.09} 
 \\
\hline
0.90&-&-62.16 & -81.42&-& 1.39e+00 & \textbf{9.30e-05} &-& 2000& \textbf{386}&-& 17.91& \textbf{1.13}\\
\hline
			\end{tabular}}
   \caption{Comparison between RBB and SLBB on problem \ref{t1} (Each test is run 10 times and averaged).
}
		\end{table}
  \begin{figure}[H]
        \centering
    \includegraphics[scale=0.55]{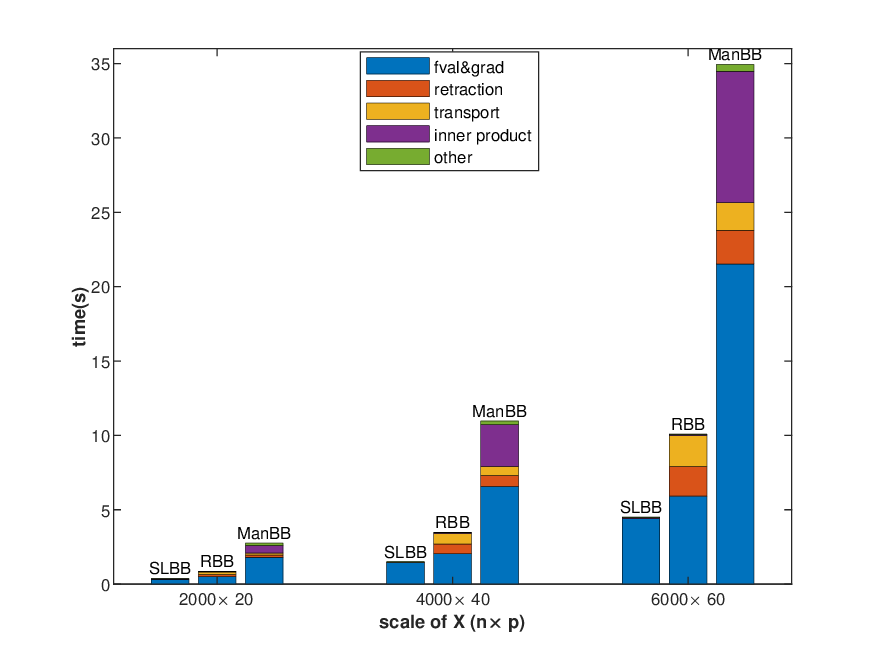}
        \caption{Comparison of the CPU time in each part with different problem scales.}
        \label{timepart}
    \end{figure}

\begin{figure}[H]
   		\centering
		\includegraphics[width=8cm,height=6cm]{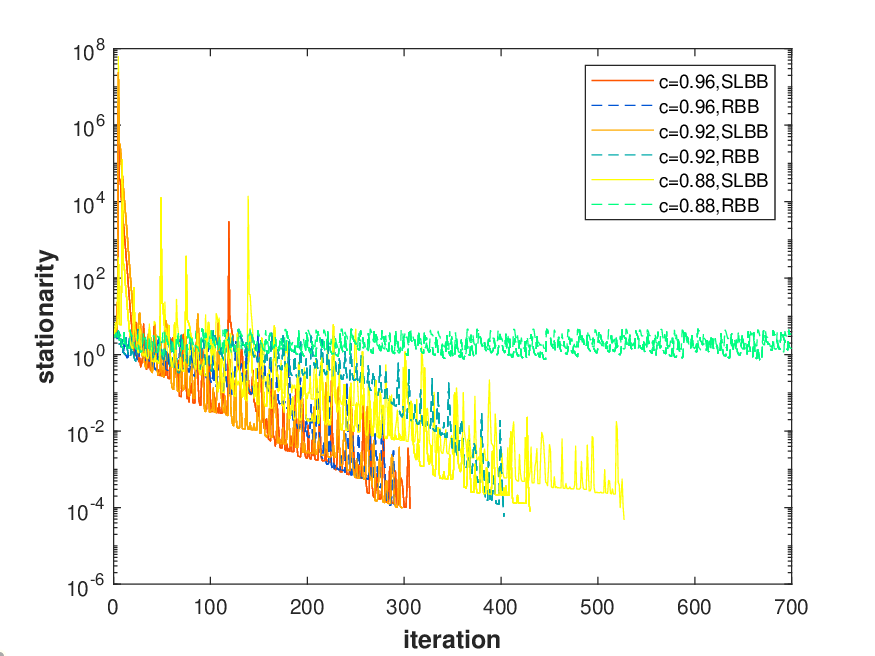}
  \caption{Comparison with varying $M$ rank $c$. 
}
\label{rank}
 	\end{figure}

In the following section, we present the numerical results for Problem \ref{t2} in Table \ref{t2r}. It is worth noting that across all experiments, SLBB consistently exhibited the most efficient computational performance in terms of CPU time.
 \begin{table}[htbp]
		\footnotesize
			\centering
			 \setlength{\tabcolsep}{1mm}{\begin{tabular}{|c|c|c|c|c|c|c|c|c|c|c|c|c|}
				\hline
				\makecell{Test \\problems}&\multicolumn{3}{c|}{Function values}&\multicolumn{3}{c|}{Stationarity}&\multicolumn{3}{c|}{Iterations}&\multicolumn{3}{c|}{CPU time(s)}\\
			\hline
$n_1=n_2$&&&&&&&&&&&&\\
\hline
200&-18.07 & -18.07 & -18.07 & 9.23e-05 & 9.96e-05 & 9.95e-05 & 3507.00 & 7250.70 & 7365.70 & 39.57 &  42.07 & \textbf{22.17} \\
 \hline 
400&-17.78 & -17.78 & -17.78 & 9.25e-05 & 9.95e-05 & 9.95e-05 & 2010.20 & 3023.80 & 3035.00 & 50.36 &  32.88 & \textbf{20.46}
\\
 \hline 
600&-18.46 & -18.46 & -18.46 & 8.71e-05 & 9.82e-05 & 9.82e-05 & 706.10 & 992.50 & 1017.00 & 32.20 &  18.65 & \textbf{12.81}\\
 \hline 
800&-18.01 & -18.01 & -18.01 & 9.63e-05 & 9.96e-05 & 9.96e-05 & 1484.30 & 3157.80 & 3185.60 & 112.13 &  90.98 & \textbf{63.54}
\\
 \hline 
1000&-17.82 & -17.82 & -17.82 & 9.01e-05 & 9.97e-05 & 9.97e-05 & 1724.30 & 4526.60 & 4497.40 & 185.78 &  192.01 & \textbf{135.36}\\
 \hline
 $p$&&&&&&&&&&&&\\
\hline
20&-12.03 & -12.03 & -12.03 & 9.33e-05 & 9.87e-05 & 9.92e-05 & 1258.00 & 2227.90 & 2251.80 & 35.19 &  22.21 & \textbf{15.11} \\
 \hline 
25&-15.62 & -15.62 & -15.62 & 8.97e-05 & 9.88e-05 & 9.94e-05 & 839.90 & 1324.40 & 1362.60 & 28.80 &  16.77 & \textbf{11.62} 
\\
 \hline 
30&-17.92 & -17.92 & -17.92 & 9.10e-05 & 9.89e-05 & 9.92e-05 & 1521.70 & 2722.20 & 2793.20 & 52.54 &  39.48 & \textbf{29.25} 
\\
 \hline 
35&-21.20 & -21.20 & -21.20 & 9.74e-05 & 9.93e-05 & 9.94e-05 & 1099.30 & 1781.70 & 1801.20 & 43.24 &  29.69 & \textbf{19.67} 
\\
 \hline 
40&-24.49 & -24.49 & -24.49 & 8.98e-05 & 9.91e-05 & 9.94e-05 & 1169.20 & 1793.30 & 1828.80 & 47.59 &  32.69 & \textbf{21.25}\\
 \hline
 $\gamma$&&&&&&&&&&&&\\
 \hline
0.02&-19.90 & -19.90 & -19.90 & 9.30e-05 & 9.98e-05 & 9.97e-05 & 889.90 & 2616.50 & 3016.80 & 31.21 &  38.34 & \textbf{28.85} \\
 \hline 
0.04&-18.93 & -18.93 & -18.93 & 9.31e-05 & 9.97e-05 & 9.97e-05 & 1980.80 & 3341.90 & 3328.40 & 67.59 &  47.87 & \textbf{30.47} 
\\
 \hline 
0.06&-17.68 & -17.68 & -17.68 & 9.40e-05 & 9.90e-05 & 9.92e-05 & 1182.60 & 1792.70 & 1842.40 & 40.94 &  26.01 & \textbf{17.21}\\
 \hline 
0.08&-16.79 & -16.79 & -16.79 & 8.78e-05 & 9.97e-05 & 9.93e-05 & 1668.10 & 2915.40 & 2625.60 & 56.64 &  43.28 & \textbf{25.65} 
\\
 \hline 
0.10&-16.37 & -16.37 & -16.37 & \textbf{8.98e-05} & 2.93e-03 & 4.78e-03 & \textbf{5944.60} & 10000.00 & 10000.00 & 203.09 &  140.68 & 90.39 
\\
 \hline
 $\mu$&&&&&&&&&&&&\\
\hline
0.0005&-18.25 & -18.25 & -18.25 & 9.50e-05 & 9.93e-05 & 9.89e-05 & 1315.40 & 1816.10 & 1845.30 & 44.54 &  25.83 & \textbf{16.96} 
\\
 \hline 
0.0010&-17.68 & -17.68 & -17.68 & 9.62e-05 & 9.88e-05 & 9.91e-05 & 1082.20 & 1737.20 & 1761.30 & 36.84 &  25.05 & \textbf{16.19}
\\
 \hline 
0.0015&-18.33 & -18.33 & -18.33 & 9.50e-05 & 9.93e-05 & 9.94e-05 & 1939.60 & 3900.00 & 3883.40 & 66.63 &  55.76 & \textbf{36.11} \\
 \hline 
0.0020&-18.25 & -18.25 & -18.25 & 9.62e-05 & 9.97e-05 & 9.96e-05 & 807.50 & 1634.20 & 1682.90 & 27.93 &  23.89 & \textbf{15.98} 
\\
 \hline 
0.0025&-17.89 & -17.89 & -17.89 & 9.20e-05 & 9.96e-05 & 9.92e-05 & 624.10 & 1048.70 & 1080.20 & 21.34 &  15.13 & \textbf{10.06} \\
 \hline
			\end{tabular}}
   \caption{Comparison between RBB and SLBB on problem \ref{t2} (Each test is run 10 times and averaged).
}
 \label{t2r}
\end{table}

\section{Conclusion}\label{conclu}

In this study, we developed a novel penalty model \ref{slep} to solve the optimization problem on the generalized Stiefel manifold with a possibly rank-deficient $M$. We prove the equivalence between the original problem \ref{ocp} and the penalty model \ref{slep} in the sense that, within a neighborhood of the feasible region and for a sufficiently large but finite penalty parameter $\beta$, they share the same first- and second-order stationary points. Based on the proposed penalty model \ref{slep}, we propose an infeasible algorithm (Algorithm \ref{al:slg}) for solving problem \ref{ocp}, which employs the gradient descent method with BB stepsizes. Extensive numerical experiments demonstrate that Algorithm \ref{al:slg} achieves superior performance compared with existing state-of-the-art Riemannian optimization methods.  

\paragraph{Acknowledgement}
The work of Xin Liu was supported in part by the National Natural Science Foundation of China (12125108, 12021001, 12288201), and RGC grant JLFS/P-501/24 for the CAS AMSS–PolyU Joint Laboratory in Applied Mathematics.

\bibliography{refer}
\end{document}